\let\ORIlabel\label
\let\ORIrefstepcounter\refstepcounter
\AddToHook{package/hyperref/before}{%
  \let\label\ORIlabel
  \let\refstepcounter\ORIrefstepcounter
}

\documentclass[final,onefignum,onetabnum]{siamart171218}

\usepackage{amsfonts}
\usepackage{amssymb}
\usepackage{amsmath}
\usepackage{mathtools}
\usepackage{graphicx}
\usepackage{algorithmic}
\usepackage{booktabs}
\usepackage{array}
\usepackage{bm}
\usepackage{url}
\usepackage{xcolor}
\usepackage{enumitem}
\usepackage{microtype}
\usepackage{tabularx}
\usepackage{placeins}
\usepackage{tikz}

\ifpdf
  \DeclareGraphicsExtensions{.pdf,.png,.jpg}
\else
  \DeclareGraphicsExtensions{.eps}
\fi

\newsiamremark{remark}{Remark}
\newsiamremark{hypothesis}{Hypothesis}
\newsiamremark{discussion}{Discussion}
\newsiamremark{example}{Example}
\crefname{hypothesis}{Hypothesis}{Hypotheses}
\crefname{example}{Example}{Examples}
\newsiamthm{claim}{Claim}
\newsiamthm{assumption}{Assumption}

\DeclareMathOperator{\diag}{diag}
\DeclareMathOperator{\dom}{dom}

\DeclareMathOperator{\gra}{gra}
\DeclareMathOperator{\Fix}{Fix}
\DeclareMathOperator{\zer}{zer}
\DeclareMathOperator{\Lip}{Lip}
\DeclareMathOperator{\prox}{prox}

\DeclareMathOperator{\proj}{proj}
\DeclareMathOperator{\argmin}{argmin}

\DeclareMathOperator{\RePart}{Re}

\newcommand{\R}{\mathbb{R}}

\newcommand{\cX}{\mathcal{X}}

\newcommand{\Rplus}{\mathbb{R}_{+}}
\newcommand{\Rpp}{\mathbb{R}_{++}}
\newcommand{\one}{\mathbf{1}}

\newcommand{\T}{^{\mathsf T}}

\newcommand{\inner}[2]{\left\langle #1,#2\right\rangle}
\newcommand{\norm}[1]{\left\lVert #1\right\rVert}

\newcommand{\paren}[1]{\left( #1\right)}

\setlist[itemize]{leftmargin=*,topsep=3pt,itemsep=2pt}
\setlist[enumerate]{leftmargin=*,topsep=3pt,itemsep=2pt}

\headers{Splitting Compositions of Monotone Operators}{H. Qu, Y. Wu, and T. Chaffey}

\ifpdf
\hypersetup{
  pdftitle={Triangular Splitting for Compositions of Monotone Operators},
  pdfauthor={Haoming Qu, Yefan Wu, and Thomas Chaffey},
  pdfsubject={Kernel-designed triangular splitting, gain-matrix convergence, and applications of coupled monotone-operator compositions},
  pdfkeywords={monotone operator, operator composition, nonmonotone inclusion, warped resolvent, triangular kernel, small-gain theorem, saddle system, nonlinear capacitor, equilibrium network}
}
\fi

\begin{document}

\makeatletter
\@ifundefined{cref@old@refstepcounter}{}{%
  \def\refstepcounter@optarg[#1]#2{%
    \cref@old@refstepcounter{#2}%
    \cref@constructprefix{#2}{\cref@result}%
    \@ifundefined{cref@#1@alias}%
      {\def\@tempa{#1}}%
      {\def\@tempa{\csname cref@#1@alias\endcsname}}%
    \protected@edef\cref@currentlabel{%
      [\@tempa][\arabic{#2}][\cref@result]%
      \csname p@#2\endcsname\csname the#2\endcsname}}%
}
\makeatother

\title{Splitting Compositions of Monotone Operators}

\author{
Haoming Qu\thanks{School of Electrical and Computer Engineering, University of Sydney, NSW 2008, Australia.}
\and
Yefan Wu\thanks{School of Mathematics and Statistics, University of Sydney, NSW 2006, Australia.}
\and
Thomas Chaffey\footnotemark[1]
}

\thanks{A preliminary extended abstract of parts of this work was accepted for presentation at the 2026 International Symposium on Mathematical Theory of Networks and Systems (MTNS 2026) under the title ``A Proximal Point Algorithm for the Composition of Monotone Operators.''}

\maketitle

\begin{abstract}
Coupled convex optimisation equilibria, bilinear saddle and potential systems, and nonlinear electronic device laws lead naturally to inclusions of the form $0\in Hx+A(Bx)-b$, where $A$ and $B$ are maximal monotone but their composition need not be monotone. We introduce a new class of splitting algorithms that solve such inclusions. Introducing the intermediate graph variable $y\in Bx$ exposes a directed two-row system. A one-parameter family of kernels produces a lower-triangular warped-resolvent iteration evaluated with one application of $H$ and the ordinary resolvents $J_{\alpha A}$ and $J_{\beta B}$. Appropriate specializations of the problem data and algorithmic parameters recover forward--backward, inertial, primal--dual, and Douglas--Rachford constructions, while setting $H=b=0$ generates three novel algorithms for the pure composition $0\in A(Bx)$.

Convergence of the coupled method is certified by an explicit nonnegative two-block gain matrix. Its Schur condition yields contraction in particular weighted norms, global linear convergence, and residual error estimates. For the general problem class, our convergence condition is shown to be sharp for an explicitly constructed linear family. When the problem is specialized to a pure composition, we prove convergence under the weaker assumption that only one operator is strongly monotone. This weaker regime is illustrated on a nonlinear circuit problem. We further illustrate our methods by solving inference problems for cascaded equilibrium neural networks.
\end{abstract}

\begin{keywords}
monotone operator, operator composition, nonmonotone inclusion, warped resolvent, triangular preconditioning, small-gain theorem
\end{keywords}

\begin{AMS}
47H05, 47H09, 47J25, 65K15, 90C25
\end{AMS}

\section{Introduction}

Monotone inclusions provide a common language for convex optimization, variational inequalities, equilibrium computation, and passive network models. The proximal-point method represents zeros of a maximal monotone operator as fixed points of its resolvent, Douglas--Rachford splitting exposes the resolvents of two operators in a monotone sum, and forward--backward and primal--dual schemes exploit smooth terms, inverse relations, and linearly coupled product-space formulations \cite{Rockafellar1976,LionsMercier1979,EcksteinBertsekas1992,BauschkeCombettes2017,BricenoAriasCombettes2011,ChambollePock2011,Condat2013,Vu2013,CombettesPesquet2012}. These constructions are effective because the splitting preserves an operator geometry for which the required resolvents remain accessible.

We study inclusions of the form
\begin{equation}
\label{eq:intro-master}
0\in Hx+A(Bx)-b,
\qquad
A(Bx):=\bigcup_{y\in Bx}Ay,
\end{equation}
where $A,B:\mathcal X\rightrightarrows\mathcal X$ are maximal monotone on a real Hilbert space $\mathcal X$, $H:\mathcal X\to\mathcal X$ is single-valued, and $b\in\mathcal X$. Maximal monotonicity of the factors does not in general pass to $A\circ B$, meaning established monotone operator splitting methods do not apply.  Furthermore, the resolvent of the complete composition is generally unavailable even when $J_{\alpha A}$ and $J_{\beta B}$ are inexpensive.

\subsection{Relation to existing splitting frameworks}

Splitting algorithms solve an inclusion involving multiple operators using computation steps that each involve only a single operator.  The computational treatment of an operator is determined by the form in which that operator is accessible. Linear-composite splitting accesses an inner map through a linear operator and its adjoint, as in $L^*C(Lx)$ and related monotone--skew lifts \cite{BricenoAriasCombettes2011,CombettesPesquet2012,Condat2013,Vu2013}. Smooth nonlinear primal--dual and prox-linear methods instead use derivative information or a local linearization of the inner mapping \cite{Valkonen2014,ClasonMazurenkoValkonen2019,LewisWright2016,DrusvyatskiyPaquette2019}. In the present setting, $B$ may be nonlinear and set-valued, so its graph relation $y\in Bx$ and its resolvent $J_{\beta B}$ provide the natural  access model.

Once the available operator evaluations have been fixed, splitting design concerns how they are assembled into an implementable coupled step that converges to a solution of the original inclusion. Prediction--correction and asymmetric forward--backward--adjoint (AFBA) splitting organize coupled updates through correction operators or non-self-adjoint preconditioners \cite{he2012convergence,LatafatPatrinos2017}. This asymmetric-preconditioning principle also informs the kernel construction developed below. Degenerate preconditioned proximal methods and warped-resolvent theory place this auxiliary operator at the level of the implicit equation, where its interaction with the lifted operator determines whether the step decomposes into tractable block evaluations \cite{BrediesEtAl2022,BuiCombettes2020}. In warped-resolvent terminology, this auxiliary single-valued operator is the kernel. Frugal and graph-based splitting frameworks express the resulting computational structure through dependencies among direct, forward, and resolvent evaluations \cite{morin2024frugal,bredies2024graph,aragon2025forward}. These viewpoints describe a common design chain: the auxiliary operator determines the block factorization of the implicit problem, and that factorization determines the order in which the available operator primitives are evaluated.

For \eqref{eq:intro-master}, this design chain is realized directly by the composition lift. Retaining $y\in Bx$ produces the two-row lift $b\in Ay+Hx$ and $y\in Bx$, whose directed coupling records how the output associated with one factor enters the other. We choose the kernel so that its sum with the lifted operator cancels the upper-right coupling and becomes block lower triangular. This leads to the following evaluation order: one application of $H$ enters the update through $J_{\alpha A}$, and the resulting intermediate state is then transmitted to $J_{\beta B}$. It also separates the Lipschitz gains of $H$, $J_{\alpha A}$, and $J_{\beta B}$, which become the entries of the comparison matrix used in the convergence analysis. The resulting factor-level description expresses the regularity of the lifted nonmonotone problem through its accessible components, within the broader proximal theory of complete operators and operator--kernel pairs \cite{EvensPasLatafatPatrinos2025,LeDaoThera2025}.

In contrast to linear-composite and derivative-based formulations, the inner factor $B$ may be nonlinear and set-valued and is accessed through its resolvent. Within the warped-resolvent framework, we construct kernels specifically for the directed lift of $H+A\circ B-b$, so each step separates into the ordinary resolvents of $A$ and $B$. Convergence is established from the separate component gains, including cases in which the lift is nonmonotone, rather than from monotonicity of the complete lifted operator.

\subsection{Directed lifting and kernel design}

Introducing $y\in Bx$ in \eqref{eq:intro-master} gives the equivalent system
\begin{equation}
\label{eq:intro-lift}
b\in Ay+Hx,
\qquad
y\in Bx.
\end{equation}
The two rows form a directed coupling: $x$ enters the $A$-relation through $H$, while the graph relation $y\in Bx$ links the intermediate variable $y$ back to $x$. A composition-adapted kernel cancels the upper-right coupling in the corresponding implicit equation, makes the equation block lower triangular, and yields
\begin{equation}
\label{eq:intro-algorithm}
y^{k+1}=J_{\alpha A}(y^k-\alpha Hx^k+\alpha b),
\qquad
\bar y^k=(1+\theta)y^{k+1}-\theta y^k,
\qquad
x^{k+1}=J_{\beta B}(x^k+\beta\bar y^k),
\end{equation}
with $\alpha,\beta>0$ and $\theta\in[-1,1]$. The values $\theta=-1,0,1$ transmit the previous, newly computed, and reflected intermediate states. The inclusion data $(A,B,H,b)$ determine the fixed-point set, while $(\alpha,\beta,\theta)$ and the kernel determine its algorithmic realization.

This separation places several established methods and a new pure-composition branch in one family. Exact choices of the problem data recover forward--backward, inertial, primal--dual, and Douglas--Rachford/ADMM iterations. Setting $H=b=0$ acts directly on $0\in A(Bx)$ and produces a new family of algorithms using only the two ordinary resolvents. The pure-composition branch is presented at the end of Section~\ref{sec:framework}, after the classical specializations.

\subsection{Convergence mechanism and consequences}

Convergence must be established for the iteration~\eqref{eq:intro-algorithm} without relying on standard techniques for monotone operators. We use componentwise estimates for each entry of the triangular map to produce a nonnegative $2\times2$ gain matrix. Schur stability gives constructive Perron-weighted and diagonal Hilbert product contractions, global linear convergence, residual error estimates, and a scalar feasibility criterion.  For $\theta \in [-1, 0]$, the convergence certificate is shown to be tight.

For $H=b=0$, the first row becomes autonomous. In this case, we are able to prove convergence under the weaker assumption that only one operator is strongly monotone and the other is merely monotone. A nonlinear varicap is an electronic example of such a composition. Monotone operator equilibrium networks provide a further application setting \cite{WinstonKolter2020,Chaffey2025}. 

\subsection{Contributions and organization}

The principal contributions are as follows.
\begin{enumerate}
\item We identify a broad problem class represented by \eqref{eq:intro-master}, with explicit operator identifications for coupled convex optimisation equilibria, bilinear saddle systems, bilinear potential minimization, and nonlinear circuits.
\item We show the problem can be lifted to a product space and construct a one-parameter family of triangular kernels for this lift. These kernels give a warped resolvent iteration that accesses $A$ and $B$ only through their resolvents $J_{\alpha A}$ and $J_{\beta B}$.
\item We recover representative classical methods and derive a new family of algorithms for the inclusion $0\in Hx+A(Bx)-b$.
\item We prove a two-block gain-matrix convergence theorem, an exact scalar Schur criterion, and explicitly construct examples that show necessity of these conditions.
\item We establish convergence for pure compositions under joint resolvent contractivity or strong monotonicity of only one factor. We illustrate the latter regime on periodic varicap voltage recovery and extend the lower-triangular mechanism to finite MonDEQ cascades.
\end{enumerate}

Section~\ref{sec:problem-classes} derives the problem classes leading to \eqref{eq:intro-master}. Section~\ref{sec:framework} develops the kernel family, its classical specializations, and the new pure-composition algorithms. Section~\ref{sec:certificate-calibration} gives the gain-matrix convergence and tightness analysis. Section~\ref{sec:pure-composition} treats pure compositions under weaker assumptions and presents the varicap example. Section~\ref{sec:mondeq} develops the MonDEQ cascade. Appendices~\ref{app:gain}--\ref{app:numerics} record reusable block-gain constructions and numerical protocols.
\section{Representative problem classes}
\label{sec:problem-classes}
\label{sec:convex-games}

In this section, we show that the problem $0 \in Hx+A(Bx)$ arises naturally from a range of problems: coupled convex optimisation problems, their bilinear saddle and minimization subclasses, and nonlinear electronic device laws. 

\subsection{Coupled convex optimisation problems}
\label{subsec:convex-game-representation}

Let $\cX$ be a real Hilbert space, let $f,g\in\Gamma_0(\cX)$, let $H:\cX\to\cX$ be single-valued, and let $b\in\cX$. Define
\begin{equation}
\label{eq:convex-game-objectives}
\mathcal J_g(y;x):=g(y)+\inner{Hx-b}{y},\qquad \mathcal J_f(x;y):=f(x)-\inner{y}{x},
\end{equation}
and
\begin{equation}
\label{eq:convex-response-maps}
\mathcal R_g(x):=\argmin_v\mathcal J_g(v;x),\qquad \mathcal R_f(y):=\argmin_u\mathcal J_f(u;y).
\end{equation}
Each block therefore solves a convex optimisation problem in its own variable. The resulting equilibrium formulation is consistent with the variational-inequality treatment of convex games and monotone Nash systems \cite{Rosen1965,FacchineiPang2003,BricenoAriasCombettes2013,YiPavel2019}.

\begin{proposition}[Coupled convex equilibrium representation]
\label[proposition]{prop:coupled-response}
A pair $(y^\star,x^\star)$ satisfies
\begin{equation}
\label{eq:coupled-response-optimization}
y^\star\in\mathcal R_g(x^\star),\qquad x^\star\in\mathcal R_f(y^\star)
\end{equation}
if and only if
\begin{equation}
\label{eq:coupled-response-relations}
b-Hx^\star\in\partial g(y^\star),\qquad y^\star\in\partial f(x^\star).
\end{equation}
Consequently, $x^\star$ satisfies
\begin{equation}
\label{eq:subdiff-master-inclusion}
0\in Hx^\star+(\partial g\circ\partial f)(x^\star)-b.
\end{equation}
\end{proposition}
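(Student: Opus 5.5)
The plan is to apply Fermat's rule to each block problem in \eqref{eq:convex-response-maps} separately, and then eliminate $y^\star$ to reach the composed inclusion.

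For fixed $x$, the map $v\mapsto\mathcal J_g(v;x)=g(v)+\inner{Hx-b}{v}$ is the sum of $g\in\Gamma_0(\cX)$ and a continuous linear functional. The subdifferential sum rule therefore holds with no qualification condition, because the linear term is finite and continuous everywhere. This gives
\[
\partial_v\mathcal J_g(v;x)=\partial g(v)+Hx-b .
\]
By Fermat's rule for convex functions, $y^\star\in\mathcal R_g(x^\star)$ holds if and only if $0\in\partial g(y^\star)+Hx^\star-b$. Equivalently, $b-Hx^\star\in\partial g(y^\star)$.

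The same argument applies to the second block. For fixed $y$, the map $u\mapsto f(u)-\inner{y}{u}$ has subdifferential $\partial f(u)-y$. Hence $x^\star\in\mathcal R_f(y^\star)$ holds if and only if $y^\star\in\partial f(x^\star)$. Combining the two equivalences gives the stated equivalence of \eqref{eq:coupled-response-optimization} and \eqref{eq:coupled-response-relations}. No properness issue arises, since the argmin sets are defined directly and Fermat's rule characterizes their elements exactly.

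For the consequence, set $y^\star\in\partial f(x^\star)$. Then $b-Hx^\star\in\partial g(y^\star)$ and $y^\star\in\partial f(x^\star)$ give
\[
b-Hx^\star\in\bigcup_{y\in\partial f(x^\star)}\partial g(y)=(\partial g\circ\partial f)(x^\star),
\]
using the definition of composition in \eqref{eq:intro-master}. This rearranges to \eqref{eq:subdiff-master-inclusion}.

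The only step that needs care is invoking the sum rule. It is justified because one summand is affine and continuous. Beyond that the argument is routine. Note that the final implication is one-directional. Conversely, any solution $x^\star$ of \eqref{eq:subdiff-master-inclusion} yields some witness $y^\star$ and hence an equilibrium pair, but the proposition only asserts the forward direction.
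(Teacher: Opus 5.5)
Your proposal is correct and follows the paper's proof: you apply Fermat's rule to each block problem to obtain the two subdifferential relations, and the final inclusion then follows from the definition of the set-valued composition. Your remark that the sum rule needs no qualification condition, because the added term is a continuous linear functional, correctly justifies a step the paper leaves implicit.
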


\begin{proof}
Fermat's rule converts \eqref{eq:coupled-response-optimization} into $0\in\partial g(y^\star)+Hx^\star-b$ and $0\in\partial f(x^\star)-y^\star$, which are precisely \eqref{eq:coupled-response-relations}. The final statement is the definition of the set-valued composition.
\end{proof}

For general $H$, the cross-coupling need not satisfy an integrability condition, and \eqref{eq:coupled-response-relations} defines a coupled convex optimisation system. Two scalar choices of $H$ yield canonical variational subclasses.
\subsection{Bilinear saddle and bilinear minimization models}
\label{subsec:integrable-boundaries}

Bilinear saddle point problems are a class of convex--concave optimization problems that arise in a wide range of machine learning problems \cite{dirren2025contractivity}.  The following proposition shows that the weighted saddle model is included within our composition framework. In finite dimensions, it is a scalar-coupling instance of the bilinear saddle-point problem studied by Dirren et al.~\cite{dirren2025contractivity}, who exploit the saddle-point structure to establish strict contractivity of several primal--dual methods, including Chambolle--Pock. Connections with classical primal--dual methods are discussed in \Cref{subsec:classical-specializations}.

\begin{proposition}[Weighted bilinear saddle model]
\label[proposition]{prop:zero-sum-boundary}

Let $H=\gamma I$ with $\gamma>0$. Then \eqref{eq:coupled-response-relations} is the saddle system
\begin{equation}
\label{eq:zero-sum-saddle}
\min_y\max_x\ \mathcal L(y,x),
\qquad
\mathcal L(y,x):=g(y)-\inner{b}{y}+\gamma\inner{y}{x}-\gamma f(x).
\end{equation}
\end{proposition}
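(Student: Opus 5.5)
The plan is to show that the saddle-point optimality conditions of $\mathcal L$ coincide with \eqref{eq:coupled-response-relations} when $H=\gamma I$. First I fix what "saddle system" means: $(y^\star,x^\star)$ is a saddle point if $y^\star$ minimizes $\mathcal L(\cdot,x^\star)$ and $x^\star$ maximizes $\mathcal L(y^\star,\cdot)$. I then check convexity: $\mathcal L(\cdot,x)=g-\inner{b}{\cdot}+\gamma\inner{\cdot}{x}$ is in $\Gamma_0(\cX)$, being $g$ plus a continuous affine function. Since $\gamma>0$, $-\mathcal L(y,\cdot)=\gamma f-\gamma\inner{y}{\cdot}$ is also in $\Gamma_0(\cX)$. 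This convex--concave structure is where $\gamma>0$ is needed.

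Next I apply Fermat's rule to each partial problem, using the sum rule for a $\Gamma_0$ function plus a continuous affine term. This rule holds without any qualification condition. For the $y$-block, $0\in\partial g(y^\star)-b+\gamma x^\star$, that is, $b-Hx^\star\in\partial g(y^\star)$ with $Hx^\star=\gamma x^\star$. For the $x$-block, $0\in\gamma\partial f(x^\star)-\gamma y^\star$. Dividing by $\gamma>0$ and using $\partial(\gamma f)=\gamma\partial f$ gives $y^\star\in\partial f(x^\star)$. These are exactly the two relations in \eqref{eq:coupled-response-relations}, and each step is an equivalence, which gives both directions.

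An alternative route is to note that $\mathcal J_g(\cdot;x^\star)$ equals $\mathcal L(\cdot,x^\star)$ up to a constant in $y$. Likewise $\gamma\mathcal J_f(\cdot;y^\star)=-\mathcal L(y^\star,\cdot)$. So the best-response conditions \eqref{eq:coupled-response-optimization} are literally the saddle conditions, and \Cref{prop:coupled-response} then finishes the proof. I will present this version because it is shorter.

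I expect no real obstacle. The only care points are using $\gamma>0$ so that maximizing $\mathcal L$ in $x$ is equivalent to minimizing $\mathcal J_f$, and stating that "saddle system" means the pair of partial optimality conditions. This reading avoids any claim that $\min\max$ equals $\max\min$.
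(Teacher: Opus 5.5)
Your proposal is correct and is essentially the paper's proof. The paper writes the two partial optimality conditions $0\in\partial g(y)-b+\gamma x$ and $0\in\gamma\partial f(x)-\gamma y$ and divides the second by $\gamma$, which is exactly your first route; your preferred second route only repackages the same Fermat-rule computation by identifying the best-response problems with the partial saddle problems and invoking \Cref{prop:coupled-response}, though note that $\gamma\mathcal J_f(\cdot;y^\star)$ equals $-\mathcal L(y^\star,\cdot)$ only up to the additive constant $g(y^\star)-\inner{b}{y^\star}$ (just as $\mathcal L(\cdot,x^\star)=\mathcal J_g(\cdot;x^\star)-\gamma f(x^\star)$), which is harmless because finite constants do not change minimizers.
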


\begin{proof}
The saddle conditions are $0\in\partial g(y)-b+\gamma x$ and $0\in\gamma\partial f(x)-\gamma y$. Dividing the second relation by $\gamma$ gives \eqref{eq:coupled-response-relations}.
\end{proof}
When $f$ is strongly convex, the second relation uniquely determines $x=\nabla f^*(y)$, and the same system reduces to an ordinary convex problem through Fenchel conjugacy~\cite{BauschkeCombettes2017}.

The opposite coupling sign yields a joint bilinear minimization problem.

\begin{proposition}[Weighted bilinear potential minimization]
\label[proposition]{prop:potential-boundary}
Let $H=-\gamma I$ with $\gamma>0$ and define
\begin{equation}
\label{eq:weighted-potential}
\mathcal P(y,x):=g(y)-\inner{b}{y}+\gamma f(x)-\gamma\inner{y}{x}.
\end{equation}
The block first-order conditions of $\mathcal P$ are exactly \eqref{eq:coupled-response-relations}. Whenever $\mathcal P$ is convex, these relations characterize its minimizers; the curvature condition in Section~\ref{sec:certificate-calibration} gives uniqueness.
\end{proposition}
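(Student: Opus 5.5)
The plan is to compute the block subdifferentials of $\mathcal P$ directly, mirroring the proof of \Cref{prop:zero-sum-boundary}. Since $g,f\in\Gamma_0(\cX)$ and the coupling $-\gamma\inner{y}{x}$ is a continuous bilinear form, the partial functions $y\mapsto\mathcal P(y,x)$ and $x\mapsto\mathcal P(y,x)$ are each a $\Gamma_0$ function plus a continuous affine term. The sum rule for a proper lsc convex function plus a continuous affine function is exact. It gives
\begin{equation*}
\partial_y\mathcal P(y,x)=\partial g(y)-b-\gamma x,
\qquad
\partial_x\mathcal P(y,x)=\gamma\,\partial f(x)-\gamma y .
\end{equation*}
The block first-order conditions $0\in\partial_y\mathcal P$ and $0\in\partial_x\mathcal P$ then read
\begin{equation*}
b+\gamma x\in\partial g(y),
\qquad
y\in\partial f(x),
\end{equation*}
where in the second relation I divide by $\gamma>0$. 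With $H=-\gamma I$ we have $b-Hx=b+\gamma x$, so these are exactly \eqref{eq:coupled-response-relations}. This proves the first claim. By \Cref{prop:coupled-response}, the relations are also equivalent to the coupled best-response system and to \eqref{eq:subdiff-master-inclusion}.

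For the characterization of minimizers, assume $\mathcal P$ is jointly convex. It is then proper and lsc on $\cX\times\cX$, being a separable $\Gamma_0$ function plus a continuous quadratic. By Fermat's rule, $(y^\star,x^\star)$ minimizes $\mathcal P$ if and only if $0\in\partial\mathcal P(y^\star,x^\star)$. The key step is to identify the joint subdifferential with the product of the block subdifferentials. I would write $\mathcal P=\Phi+Q$, where $\Phi(y,x)=g(y)-\inner{b}{y}+\gamma f(x)$ is separable, so that $\partial\Phi=\partial_y\Phi\times\partial_x\Phi$, and $Q(y,x)=-\gamma\inner{y}{x}$ is smooth with gradient $(-\gamma x,-\gamma y)$.

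The main obstacle is that $Q$ is concave, not convex, so the standard convex sum rule does not apply directly. I would handle this in one of two ways. The first is to note that for a convex function of the form $\Phi+Q$ with $Q$ Fréchet differentiable, the convex subdifferential coincides with the Fréchet subdifferential. The latter obeys the exact smooth sum rule $\hat\partial(\Phi+Q)=\hat\partial\Phi+\nabla Q$, and since $\Phi$ is convex, $\hat\partial\Phi=\partial\Phi$. The second, more elementary, route is to prove the two directions by hand.

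Necessity is immediate: a joint minimizer minimizes each partial function, so the block conditions hold. For sufficiency, suppose the block conditions hold at $z^\star=(y^\star,x^\star)$, so that $u:=-\nabla Q(z^\star)\in\partial\Phi(z^\star)$. Joint convexity gives, for any $z$ and $t\in(0,1]$,
\begin{equation*}
\mathcal P(z)-\mathcal P(z^\star)\ \ge\ \frac{\mathcal P(z^\star+t(z-z^\star))-\mathcal P(z^\star)}{t}.
\end{equation*}
I would then lower-bound the numerator using the subgradient inequality for $\Phi$ together with the expansion $Q(z^\star+tw)=Q(z^\star)+t\inner{\nabla Q(z^\star)}{w}+t^2Q(w)$. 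This gives a lower bound of $tQ(z-z^\star)$ for the difference quotient, which tends to $0$ as $t\downarrow0$. Hence $\mathcal P(z)\ge\mathcal P(z^\star)$, so $z^\star$ is a minimizer.

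Finally, I would note that uniqueness is deferred to Section~\ref{sec:certificate-calibration}, as stated, so no argument is needed here.
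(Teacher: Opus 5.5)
Your block-subdifferential computation, followed by division of the second relation by $\gamma>0$, is exactly the paper's proof. The paper stops there and asserts the minimizer characterization without argument. Your supplementary argument for that clause is correct: necessity needs no convexity, and the bound $\bigl(\mathcal P(z^\star+t(z-z^\star))-\mathcal P(z^\star)\bigr)/t\ge t\,Q(z-z^\star)$ properly handles the nonconvex coupling term. One aside is slightly loose: \eqref{eq:subdiff-master-inclusion} for $x^\star$ is equivalent to the relations only after existentially quantifying over $y^\star$.
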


\begin{proof}
The block conditions are $0\in\partial g(y)-b-\gamma x$ and $0\in\gamma\partial f(x)-\gamma y$, which give \eqref{eq:coupled-response-relations} with $H=-\gamma I$.
\end{proof}

Thus the same composition architecture contains a zero-sum saddle boundary for $H=\gamma I$, a potential minimization boundary for $H=-\gamma I$, and a general coupled convex optimisation system for nonscalar or nonintegrable $H$.

\subsection{Nonlinear circuit elements}
\label{subsec:varicap-model}

The same composition architecture also arises naturally in dynamic physical systems, where a static constitutive relation is followed by a dynamic operator. For a nonlinear capacitor, the voltage-to-charge relation is static, while temporal differentiation maps charge to current, producing a dynamic--static composition, which is not, in general, monotone \cite{ChaffeySepulchre2024, kulkarni2002incremental}. Monotone circuit relations originate in network theory \cite{Minty1960,ChuaDesoerKuh1987} and have recently been combined with monotone--skew splitting for circuit computation \cite{ChaffeyBanertGiselssonPates2023,ChaffeySepulchre2024}. At the single-device level, we consider periodic voltages and currents with period $T$ and work in $\cX=L^2(0,T)$. The differentiation operator $d/dt$ is defined on $\{q\in H^1(0,T):q(0)=q(T)\}$, imposing periodic boundary conditions. It is monotone \cite[Example~20.9]{BauschkeCombettes2017} and its kernel consists of constant functions.

As a specific example, we consider a varicap nonlinear capacitor.  For $K,V_0,\varepsilon>0$, define the regularized varicap law pointwise by
\begin{equation}
\label{eq:single-varicap-law}
B_\varepsilon(v):=
\begin{cases}
-K\sqrt{V_0-v}+\varepsilon v,&v<V_0,\\
[\varepsilon V_0,\infty),&v=V_0.
\end{cases}
\end{equation}
This is a maximal $\varepsilon$-strongly monotone charge--voltage relation. Given a prescribed zero-mean periodic current $i^\star\in\cX$, voltage recovery requires a charge $q$ in the domain of $d/dt$ such that
\begin{equation}
\label{eq:varicap-composition-model}
q(t)\in B_\varepsilon(v(t))\quad\text{a.e.},\qquad \frac{dq}{dt}=i^\star.
\end{equation}
With
\begin{equation}
\label{eq:varicap-operator-dictionary}
A(q):=\frac{dq}{dt}-i^\star,\qquad B:=B_\varepsilon,
\end{equation}
this is the pure composition
\begin{equation}
\label{eq:varicap-pure-composition}
0\in A(B(v)),
\end{equation}
namely the case $H=0$ and $b=0$. The mean charge is restricted to the affine subset 
\begin{equation}
\label{eq:charge-mean-slice}
\mathcal Q_{\rm ref}:=\left\{q\in L^2(0,T):\frac{1}{T}\int_0^Tq(t)\,dt=\bar q_{\rm ref}\right\},
\end{equation}
which fixes the DC offset without changing the physical current. Section~\ref{sec:pure-composition} returns to this example after the pure-composition convergence theory.

\subsection{The common $H+A\circ B$ architecture}
\label{subsec:common-problem}

The preceding models have the same operator representation. Let $A,B:\cX\rightrightarrows\cX$ be maximal monotone, let $H:\cX\to\cX$ be single-valued, and let $b\in\cX$. We study the zero-finding problem
\begin{equation}
\label{eq:master-problem}
0\in Hx+A(Bx)-b.
\end{equation}
Introducing the intermediate variable $y\in Bx$ gives
\begin{equation}
\label{eq:master-lifted}
b\in Ay+Hx,
\qquad
y\in Bx.
\end{equation}

\begin{table}[t]
\centering
\caption{Representative problem classes and their operator identifications.}
\label{tab:problem-dictionary}
\small
\renewcommand{\arraystretch}{1.10}
\begin{tabularx}{0.96\textwidth}{@{}l c c c X@{}}
\toprule
problem class & $A$ & $B$ & $H$ & resulting model \\
\midrule
coupled convex optimisation equilibria & $\partial g$ & $\partial f$ & general & coupled optimisation system \eqref{eq:coupled-response-relations} \\
bilinear saddle & $\partial g$ & $\partial f$ & $\gamma I$ & saddle problem \eqref{eq:zero-sum-saddle} \\
bilinear minimization & $\partial g$ & $\partial f$ & $-\gamma I$ & potential \eqref{eq:weighted-potential} \\
periodic varicap & $\frac{d}{dt}-i^\star$ & $B_\varepsilon$ & $0$ & $i^\star\in\frac{d}{dt}B_\varepsilon(v)$ \\
\bottomrule
\end{tabularx}
\end{table}

The lifted system retains an asymmetric directed dependence: the first row updates $y$ from $x$ through $H$, whereas the second row recovers $x$ through the graph of $B$ from the intermediate variable $y$. The algorithmic task is therefore to expose the ordinary resolvents of $A$ and $B$ while preserving this directed structure. Section~\ref{sec:framework} constructs kernels that achieve this by an exact triangular factorization.

\section{Triangular splitting for compositions}
\label{sec:framework}
\label{sec:algorithm-family}

Let $J_{\alpha A}:=(I+\alpha A)^{-1}$ denote the resolvent of a maximal monotone operator $A$. By Minty's theorem, $J_{\alpha A}$ is single-valued and has full domain. The construction below uses the warped-resolvent framework \cite{BuiCombettes2020} to turn the directed lift in \eqref{eq:master-lifted} into a lower-triangular map evaluated with the ordinary resolvents of $A$ and $B$.

\subsection{Directed lift and triangular kernels}
\label{subsec:common-iteration}
\label{subsec:triangular-kernel}

Define the lifted operator
\begin{equation}
\label{eq:lifted-operator}
M_{H,A,B}\binom{y}{x}:=\binom{Ay+Hx}{Bx-y}.
\end{equation}
The condition $\binom{b}{0}\in M_{H,A,B}\binom{y}{x}$ is exactly \eqref{eq:master-lifted}.

For $\alpha,\beta>0$ and $\theta\in\R$, introduce the kernel
\begin{equation}
\label{eq:triangular-kernel}
K_\theta^{\alpha,\beta}\binom{y}{x}
:=\binom{\alpha^{-1}y-Hx}{-\theta y+\beta^{-1}x}.
\end{equation}
Summing with the lifted operator gives
\begin{equation}
\label{eq:triangular-cancellation}
K_\theta^{\alpha,\beta}+M_{H,A,B}
=
\begin{bmatrix}
\alpha^{-1}I+A&0\\
-(1+\theta)I&\beta^{-1}I+B
\end{bmatrix}.
\end{equation}
The upper-right block vanishes and the individual blocks depend only on the individual operators $A$ and $B$, so the implicit equation \eqref{eq:master-lifted} can be solved using only forward substitution and individual resolvent evaluations.  Applying the proximal point algorithm to \eqref{eq:triangular-cancellation} gives Algorithm \ref{alg:triangular}.

\begin{algorithm}[H]
\caption{One-parameter triangular warped-resolvent iteration}
\label{alg:triangular}
\begin{algorithmic}[1]
\REQUIRE $\alpha,\beta>0$, $\theta\in\R$, and $(y^0,x^0)\in\cX^2$
\FOR{$k=0,1,2,\ldots$}
\STATE $y^{k+1}\leftarrow J_{\alpha A}(y^k-\alpha Hx^k+\alpha b)$
\STATE $\bar y^k\leftarrow(1+\theta)y^{k+1}-\theta y^k$
\STATE $x^{k+1}\leftarrow J_{\beta B}(x^k+\beta\bar y^k)$
\ENDFOR
\end{algorithmic}
\end{algorithm}

\begin{remark}[Well-posedness of Algorithm~\ref{alg:triangular}.]
\label[remark]{rem:step-wellposed}
Maximal monotonicity of $A$ and $B$ and the full domain of $H$ make every line of \Cref{alg:triangular} single-valued. Thus \Cref{alg:triangular} defines a map $T_{\theta,b}:\cX^2\to\cX^2$.   
\end{remark}
The following proposition shows that the fixed points of the map $T_{\theta,b}$ defined by Algorithm~\ref{alg:triangular} are exactly the solutions of the lifted system.

\begin{proposition}[Kernel-independent fixed points]
\label[proposition]{prop:master-fixed-points}
For every $\alpha,\beta>0$ and $\theta\in\R$,
\begin{equation}
\label{eq:master-fixed-points}
\Fix T_{\theta,b}
=
\{(y,x):b\in Ay+Hx,\ y\in Bx\}.
\end{equation}
Hence the second component of every fixed point solves \eqref{eq:master-problem}, and every solution of \eqref{eq:master-problem} admits a lifted fixed point.
\end{proposition}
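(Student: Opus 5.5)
The plan is to characterise the fixed points directly, using the resolvent identity $p=J_{\gamma C}(z)\iff z-p\in\gamma Cp$. This identity holds for any maximal monotone $C$ and $\gamma>0$, because $J_{\gamma C}$ is single-valued with full domain (Minty's theorem, as recalled above). I will prove the two inclusions in \eqref{eq:master-fixed-points} separately, working from the explicit lines of \Cref{alg:triangular} rather than from the kernel formalism. One could equivalently observe that $(y,x)\in\Fix T_{\theta,b}$ iff $K_\theta^{\alpha,\beta}(y,x)+(b,0)\in (K_\theta^{\alpha,\beta}+M_{H,A,B})(y,x)$, so the kernel cancels and the condition reduces to $(b,0)\in M_{H,A,B}(y,x)$. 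The line-by-line computation makes this transparent.

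\emph{Step 1 (fixed point $\Rightarrow$ lifted solution).} Suppose $(y,x)=T_{\theta,b}(y,x)$, so that $y^k=y^{k+1}=y$ and $x^k=x^{k+1}=x$.
\begin{itemize}
\item The first line gives $y=J_{\alpha A}(y-\alpha Hx+\alpha b)$. By the resolvent identity, $-\alpha Hx+\alpha b\in\alpha Ay$. Dividing by $\alpha>0$ yields $b\in Ay+Hx$.
\item The extrapolation becomes $\bar y=(1+\theta)y-\theta y=y$. This holds for every $\theta\in\R$, and it is the only place $\theta$ enters.
\item The third line gives $x=J_{\beta B}(x+\beta y)$, hence $\beta y\in\beta Bx$, i.e.\ $y\in Bx$.
\end{itemize}

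\emph{Step 2 (lifted solution $\Rightarrow$ fixed point).} Conversely, let $b\in Ay+Hx$ and $y\in Bx$, and start one iteration from $(y^k,x^k)=(y,x)$.
\begin{itemize}
\item Since $b-Hx\in Ay$, we have $y-\alpha Hx+\alpha b\in (I+\alpha A)y$. Single-valuedness of $J_{\alpha A}$ then forces $y^{k+1}=y$.
\item Consequently $\bar y^k=y$.
\item Since $y\in Bx$, we have $x+\beta y\in(I+\beta B)x$, so $x^{k+1}=x$.
\end{itemize}
Hence $(y,x)\in\Fix T_{\theta,b}$. The resulting fixed-point set is independent of $\alpha,\beta,\theta$.

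\emph{Step 3 (consequences).} If $(y,x)$ is a fixed point, then $y\in Bx$ and $b-Hx\in Ay$. Therefore $b-Hx\in A(Bx)$ by the definition of the set-valued composition, i.e.\ $x$ solves \eqref{eq:master-problem}. Conversely, if $0\in Hx+A(Bx)-b$, there is some $y\in Bx$ with $b-Hx\in Ay$. By Step 2, $(y,x)$ is a fixed point.

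There is no genuine obstacle. The only point requiring care is the use of single-valuedness of the resolvents in Step 2. Without it, one could only conclude that $(y,x)$ is \emph{a} possible output of the iteration, rather than \emph{the} output.
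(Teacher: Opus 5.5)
Your proposal is correct and follows essentially the same route as the paper. In both, the extrapolated signal collapses to $\bar y=y$ at a fixed point, the two resolvent identities give $b-Hx\in Ay$ and $y\in Bx$, and the converse reverses these equivalences using single-valuedness of $J_{\alpha A}$ and $J_{\beta B}$; you simply spell out each step, and the consequence for the original inclusion, in more detail than the paper's three-line argument.
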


\begin{proof}
At a fixed point, $\bar y=(1+\theta)y-\theta y=y$. Hence $y=J_{\alpha A}(y-\alpha Hx+\alpha b)$ and $x=J_{\beta B}(x+\beta y)$, equivalently $b-Hx\in Ay$ and $y\in Bx$. Reversing the equivalences proves the converse.
\end{proof}

We next express the iteration as a warped resolvent of a translated lifted operator. The translation incorporates the forcing term $b$ already present in Proposition~\ref{prop:master-fixed-points}.

\begin{definition}[Offset operator]
\label[definition]{def:shifted-warped-resolvent}
For a set-valued operator $M$, a single-valued kernel $K$, and a fixed offset $c$, set $\widehat M_c:=M-c$ and define
\begin{equation}
\label{eq:shifted-warped-resolvent}
T_{K,M,c}(z):=((K+\widehat M_c)^{-1}K)(z)=(K+M)^{-1}(K(z)+c),
\end{equation}
whenever this map is single-valued. Thus $c$ is absorbed into the operator by range translation, and \eqref{eq:shifted-warped-resolvent} is the standard warped resolvent of $\widehat M_c$.
\end{definition}

For the lifted system, the choice $c=(b,0)^\top$ incorporates the existing forcing term $b$ in the first component and preserves the constraint $y\in Bx$ in the second component.

\begin{proposition}[Triangular warped-resolvent identity]
\label[proposition]{prop:triangular-warped-identity}
With $c=(b,0)\T$,
\begin{equation}
\label{eq:triangular-warped-identity}
T_{\theta,b}=T_{K_\theta^{\alpha,\beta},M_{H,A,B},c}
=\bigl(K_\theta^{\alpha,\beta}+M_{H,A,B}-c\bigr)^{-1}K_\theta^{\alpha,\beta}.
\end{equation}
Moreover,
\begin{equation}
\label{eq:warped-fixed-point-identity}
z=T_{K,M,c}z
\quad\Longleftrightarrow\quad
0\in(M-c)z
\quad\Longleftrightarrow\quad
c\in Mz.
\end{equation}
This equivalence recovers the fixed-point characterization in Proposition~\ref{prop:master-fixed-points}.
\end{proposition}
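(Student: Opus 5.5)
The plan is a direct computation. Fix $z=(y^k,x^k)$ and an arbitrary candidate $w=(y^{+},x^{+})$, and unfold the inclusion $w\in(K+M-c)^{-1}Kz$, i.e. $Kz\in (K+M)w-c$. By \eqref{eq:triangular-kernel},
\[
Kz=\binom{\alpha^{-1}y^k-Hx^k}{-\theta y^k+\beta^{-1}x^k}.
\]
By \eqref{eq:triangular-cancellation}, $(K+M)w-c$ is the set of pairs with first component in $(\alpha^{-1}I+A)y^{+}-b$ and second component in $-(1+\theta)y^{+}+(\beta^{-1}I+B)x^{+}$.

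I would then solve the two rows in order, exploiting the lower-triangular structure. The first row reads $\alpha^{-1}y^k-Hx^k+b\in\alpha^{-1}y^{+}+Ay^{+}$. Multiplying by $\alpha$ gives $y^k-\alpha Hx^k+\alpha b\in(I+\alpha A)y^{+}$. Since $J_{\alpha A}$ is single-valued with full domain (Minty), this is equivalent to $y^{+}=J_{\alpha A}(y^k-\alpha Hx^k+\alpha b)$, which is line 2 of Algorithm~\ref{alg:triangular}.

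Substituting this $y^{+}$ into the second row gives $-\theta y^k+\beta^{-1}x^k+(1+\theta)y^{+}\in\beta^{-1}x^{+}+Bx^{+}$. The left side equals $\beta^{-1}x^k+\bar y^k$ with $\bar y^k=(1+\theta)y^{+}-\theta y^k$. Multiplying by $\beta$ and applying $J_{\beta B}$ yields $x^{+}=J_{\beta B}(x^k+\beta\bar y^k)$, which is lines 3--4.

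Each step above is an equivalence, so the preimage set is exactly the singleton $\{T_{\theta,b}z\}$. This shows at once that the warped resolvent is single-valued, as Definition~\ref{def:shifted-warped-resolvent} requires, and that it coincides with $T_{\theta,b}$. The identity $(K+M)^{-1}(Kz+c)=(K+M-c)^{-1}Kz$ is just a range translation.

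For \eqref{eq:warped-fixed-point-identity} I would argue for a general single-valued $K$. The relation $z=T_{K,M,c}z$ holds iff $Kz\in(K+M)z-c$. Because $K$ is single-valued, $Kz$ can be cancelled from both sides, leaving $0\in Mz-c$, i.e. $c\in Mz$. Single-valuedness of $T$ ensures that "$z\in T z$" and "$z=Tz$" agree.

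Finally, I would specialize to $M=M_{H,A,B}$ and $c=(b,0)$. There $c\in Mz$ reads $b\in Ay+Hx$ and $0\in Bx-y$, which recovers Proposition~\ref{prop:master-fixed-points}.

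The only delicate point I expect is the set-valued bookkeeping in the cancellation step. One has to note that $Kz+u\in Kz+S$ iff $u\in S$ holds precisely because $Kz$ is a single vector. The rest is routine.
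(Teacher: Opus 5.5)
Your proposal is correct and follows the paper's own route. You unfold $K_\theta^{\alpha,\beta}z+c\in(K_\theta^{\alpha,\beta}+M_{H,A,B})z^+$ row by row through the lower-triangular sum \eqref{eq:triangular-cancellation} to recover the resolvent lines of Algorithm~\ref{alg:triangular}, and you get the fixed-point equivalences by cancelling the single vector $Kz$ from $Kz\in Kz+Mz-c$. Your observation that each step is an equivalence, so the warped resolvent is single-valued and equals $T_{\theta,b}$, fills in a point the paper's two-line proof leaves implicit.
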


\begin{proof}
Writing $z^+=(y^+,x^+)\T$, the relation $K_\theta^{\alpha,\beta}z+c\in(K_\theta^{\alpha,\beta}+M_{H,A,B})z^+$ and \eqref{eq:triangular-cancellation} give the two rows of \Cref{alg:triangular}. The fixed-point equivalences follow from $Kz\in Kz+Mz-c$.
\end{proof}

The translated lift $M_{H,A,B}-c$ determines the solution set, while the kernel determines the forward-substitution factorization. Thus Algorithm~\ref{alg:triangular} is a warped-resolvent realization of the range-translated lift with a composition-adapted kernel whose sum with the lift is lower triangular.

\subsection{Lift monotonicity and the limits of monotone geometry}
\label{subsec:structural-separation}

In this section, we study conditions under which 
the lifted operator $M_{H,A,B}$ in \eqref{eq:lifted-operator} is monotone on the product space $\cX^2$. We first identify a regime in which it admits a monotone interpretation in a weighted Hilbert product space and the triangular kernel induces a firmly nonexpansive map.  We then characterize cases where the lifted operator is never monotone. \Cref{sec:gain-analysis} complements this geometric viewpoint by proving convergence in these non-monotone cases. We use the following matrix facts in both analyses.

\begin{definition}[$Z$-matrices and $M$-matrices]
 A real square matrix is a $Z$-matrix if its off-diagonal entries are nonpositive. It is an $M$-matrix if it has a representation $sI-L$ with $L\geq0$ entrywise and $s\geq\rho(L)$; it is nonsingular when $s>\rho(L)$.   
\end{definition}
\begin{lemma}
\label[lemma]{lem:z-m-matrix}
 The following properties hold.
\begin{enumerate}[label=\textup{(\roman*)}]
\item For $L\in\R^{n\times n}$ with $L\geq0$ and $\gamma>0$, the conditions $\rho(L)<\gamma$, $\gamma I-L$ being a nonsingular $M$-matrix, $(\gamma I-L)^{-1}$ existing and being entrywise nonnegative, and the existence of $w>0$ with $Lw<\gamma w$ are equivalent. In this case,
\begin{equation}
\label{eq:m-matrix-positive-weight}
(\gamma I-L)^{-1}=\sum_{k=0}^{\infty}\gamma^{-k-1}L^k,
\qquad
w_\gamma:=(\gamma I-L)^{-1}\one>0,
\qquad
Lw_\gamma=\gamma w_\gamma-\one.
\end{equation}
\item For a $2\times2$ $Z$-matrix $C=\begin{bmatrix}a&-b\\-c&d\end{bmatrix}$ with $b,c\geq0$, $C$ is a nonsingular $M$-matrix if all three nonempty principal minors are positive:
\begin{equation}
\label{eq:m-matrix-two-by-two}
a>0,\qquad d>0,\qquad \det C=ad-bc>0.
\end{equation}
Equivalently, it suffices that $a>0$ and $\det C>0$. If $C$ is symmetric, these conditions are also equivalent to $C\succ0$.
\end{enumerate}
\end{lemma}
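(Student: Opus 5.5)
For part (i), I will go around the cycle of implications, using Perron--Frobenius facts for nonnegative matrices. First, if $\rho(L)<\gamma$, the Neumann series $\sum_{k\ge0}\gamma^{-k-1}L^k$ converges, because $\rho(L/\gamma)<1$. The series equals $(\gamma I-L)^{-1}$, which one checks by multiplying by $\gamma I-L$ and telescoping. Every term is entrywise nonnegative, so the inverse is nonnegative. This also gives the series identity in \eqref{eq:m-matrix-positive-weight}.

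The equivalence of $\rho(L)<\gamma$ with $\gamma I-L$ being a nonsingular $M$-matrix is essentially the definition, with $s=\gamma$. For the converse direction, suppose $\gamma I-L=sI-L'$ with $L'\ge0$ and $s>\rho(L')$. Then $L'=L+(s-\gamma)I$. Spectral shifting gives $\rho(L')=\rho(L)+s-\gamma$, because the Perron root of a nonnegative matrix is its spectral radius and shifting by a scalar moves it accordingly. This step needs $L'\ge0$; if $s<\gamma$ it can fail on the diagonal. I will handle that case by noting that the Perron root of $L$ is still attained as a real eigenvalue. So $\gamma-\rho(L)$ is a real eigenvalue of $\gamma I-L$, and it must equal $s-\lambda$ for some eigenvalue $\lambda$ of $L'$. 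Hence $\gamma-\rho(L)\ge s-\rho(L')>0$.

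Next, assume $(\gamma I-L)^{-1}\ge0$ exists. Set $w_\gamma:=(\gamma I-L)^{-1}\one$. This vector is nonnegative, and it is in fact strictly positive, because $w_\gamma=\gamma^{-1}(\one+Lw_\gamma)\ge\gamma^{-1}\one$. Rearranging gives $Lw_\gamma=\gamma w_\gamma-\one<\gamma w_\gamma$, which is the last identity in \eqref{eq:m-matrix-positive-weight}.

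To close the cycle, assume $w>0$ satisfies $Lw<\gamma w$. Pick $\gamma'<\gamma$ with $Lw\le\gamma'w$. Then the weighted max-norm $\|v\|_w:=\max_i|v_i|/w_i$ has induced norm $\|L\|_w=\max_i (Lw)_i/w_i\le\gamma'$. Therefore $\rho(L)\le\gamma'<\gamma$. This weighted-norm step is the only genuinely Perron--Frobenius-type point, and it avoids needing irreducibility.

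For part (ii), write $C=sI-L$ with $s:=\max(a,d)+1$, so $L=\begin{bmatrix}s-a&b\\c&s-d\end{bmatrix}\ge0$. I will apply (i) with $\gamma=s$. Assume $a,d>0$ and $ad>bc$. Then
\[
C^{-1}=\frac{1}{ad-bc}\begin{bmatrix}d&b\\c&a\end{bmatrix}\ge0,
\]
so (i) gives that $C$ is a nonsingular $M$-matrix. For the reduced condition, $a>0$ and $ad-bc>0$ give $ad>bc\ge0$, which forces $d>0$. For symmetric $C$ ($b=c$), positivity of the leading principal minors $a$ and $\det C$ is Sylvester's criterion for $C\succ0$; conversely, $C\succ0$ gives all principal minors positive. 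The main obstacle is the careful handling of the representation-independence in (i); the rest is routine.
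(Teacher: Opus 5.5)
Your proof is correct and is essentially the paper's argument: the Neumann series for $\rho(L)<\gamma$, positivity of $w_\gamma=(\gamma I-L)^{-1}\one$, the weighted max-norm bound $\rho(L)\le\max_i(Lw)_i/w_i$, and the explicit nonnegative $2\times2$ inverse fed back into (i). Your eigenvalue comparison $\gamma-\rho(L)=s-\lambda\ge s-\rho(L')>0$ also spells out why the $M$-matrix property does not depend on the chosen representation, which the paper treats as standard; no case split on $s<\gamma$ is needed, because $L'\ge0$ is already part of the representation. The paper's proof additionally proves the converse in (ii), namely that a nonsingular $M$-matrix has $a,d>0$ and $\det C>0$; the statement does not claim this, but it is used later in \Cref{prop:schur-criterion}.
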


\begin{proof}
These are standard $M$-matrix characterizations \cite[Chapter~6]{BermanPlemmons1994}. For (i), $\rho(L)<\gamma$ gives the Neumann series in \eqref{eq:m-matrix-positive-weight}; its zeroth term makes $w_\gamma$ strictly positive. Conversely, if the inverse exists and is nonnegative, every row has a positive entry, so $w_\gamma>0$ and $Lw_\gamma=\gamma w_\gamma-\one<\gamma w_\gamma$. More generally, any such $w$ gives, with $D_w=\diag(w_1,\ldots,w_n)$,
\[
\rho(L)=\rho(D_w^{-1}LD_w)\leq\norm{D_w^{-1}LD_w}_\infty=\max_i\frac{(Lw)_i}{w_i}<\gamma.
\]
For (ii), choose $s>\max\{a,d,0\}$ and write $C=sI-L$ with $L\geq0$. If $C$ is a nonsingular $M$-matrix, (i) implies $a,d>0$, and the eigenvalues of $C$ have positive real parts, so $\det C>0$. Conversely, \eqref{eq:m-matrix-two-by-two} gives
\[
C^{-1}=\frac1{ad-bc}\begin{bmatrix}d&b\\c&a\end{bmatrix}\geq0,
\]
and (i) applies. Also, $a>0$ and $ad-bc>0$ imply $d>bc/a\geq0$. The symmetric case follows from the positive-definiteness criterion for a $2\times2$ symmetric matrix.
\end{proof}

For $t>0$, define the bounded, self-adjoint, strongly positive product metric and its induced inner product by
\begin{equation}
\label{eq:product-metric-definition}
\begin{aligned}
P_t&:=\begin{bmatrix}I&0\\0&tI\end{bmatrix}=\diag(I,tI),\\
\inner{z}{\widetilde z}_{P_t}&:=\inner{z}{P_t\widetilde z}=\inner{y}{\widetilde y}+t\inner{x}{\widetilde x},
\qquad
\norm{z}_{P_t}^2=\norm{y}^2+t\norm{x}^2,
\end{aligned}
\end{equation}
where $z=(y,x)\T$, $\widetilde z=(\widetilde y,\widetilde x)\T$, and the unweighted inner product is the usual product inner product on $\cX^2$. These norms are equivalent since $\min\{1,t\}\norm{z}^2\leq\norm{z}_{P_t}^2\leq\max\{1,t\}\norm{z}^2$. For a single-valued map $F:\cX\to\cX$, write $\Lip(F):=\sup_{u\ne v}\norm{Fu-Fv}/\norm{u-v}$.

\begin{proposition}[Monotonicity of the lifted operator]
\label[proposition]{prop:hidden-metric-monotonicity}
Suppose that $A$ and $B$ are maximal monotone and respectively $\mu_A$- and $\mu_B$-strongly monotone, where $\mu_A,\mu_B\geq0$, and that $H:\cX\to\cX$ is Lipschitz continuous. If there exists $t>0$ satisfying
\begin{equation}
\label{eq:effective-threshold}
\Lip(H-tI)^2\leq4t\mu_A\mu_B,
\end{equation}
then $M_{H,A,B}$ is maximally monotone in the Hilbert space $(\cX^2,\inner{\cdot}{\cdot}_{P_t})$. If the inequality is strict, it is strongly monotone in this space. In particular, $H=tI$ makes the lift maximally monotone in this metric for arbitrary maximal monotone $A$ and $B$.
\end{proposition}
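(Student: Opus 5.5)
We verify monotonicity by a direct computation in the $P_t$ inner product and obtain maximality from the triangular kernel structure. Take $z=(y,x)$ and $\widetilde z=(\widetilde y,\widetilde x)$ in $\dom M_{H,A,B}=\dom A\times\dom B$, with selections $u\in Ay$, $\widetilde u\in A\widetilde y$, $v\in Bx$, $\widetilde v\in B\widetilde x$. Write $\Delta y=y-\widetilde y$ and $\Delta x=x-\widetilde x$. Then
\[
\inner{(u+Hx)-(\widetilde u+H\widetilde x)}{\Delta y}+t\inner{(v-y)-(\widetilde v-\widetilde y)}{\Delta x}.
\]
We split this as $\inner{u-\widetilde u}{\Delta y}+t\inner{v-\widetilde v}{\Delta x}+\inner{(H-tI)x-(H-tI)\widetilde x}{\Delta y}$. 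The key observation is that the $+t\inner{\Delta x}{\Delta y}$ coming from $Hx$ is cancelled, after rewriting $H=(H-tI)+tI$, by the $-t\inner{\Delta y}{\Delta x}$ coming from the second row. Strong monotonicity and Cauchy--Schwarz with $L:=\Lip(H-tI)$ give the lower bound
\[
\mu_A\norm{\Delta y}^2+t\mu_B\norm{\Delta x}^2-L\norm{\Delta x}\norm{\Delta y}.
\]
This is the quadratic form of the symmetric matrix $\begin{bmatrix}\mu_A&-L/2\\-L/2&t\mu_B\end{bmatrix}$ evaluated at $(\norm{\Delta y},\norm{\Delta x})$. Its determinant is $t\mu_A\mu_B-L^2/4$, so \eqref{eq:effective-threshold} makes it positive semidefinite, and the expression is nonnegative.

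For the strict case, \Cref{lem:z-m-matrix}(ii) (symmetric case) gives positive definiteness. The smallest eigenvalue $\lambda>0$ then yields $\ge\lambda(\norm{\Delta y}^2+\norm{\Delta x}^2)\ge\lambda\min\{1,t^{-1}\}\norm{z-\widetilde z}_{P_t}^2$, which is strong monotonicity in the $P_t$ metric. For the equality case one could object that positive semidefiniteness needs $\mu_A,t\mu_B\ge0$, but this holds trivially. When $\mu_A\mu_B=0$, the hypothesis forces $L=0$, and the form reduces to a sum of nonnegative diagonal terms.

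Maximality is the main obstacle, since monotonicity plus a sum argument is not immediate with the nonmonotone-looking coupling. I would use Minty's theorem in $(\cX^2,\inner{\cdot}{\cdot}_{P_t})$: the $P_t$-adjoint of $M_{H,A,B}$ is $P_t^{-1}M_{H,A,B}$ in the standard inner product, so it suffices to show that $P_t^{-1}M_{H,A,B}+\lambda I$, equivalently $M_{H,A,B}+\lambda P_t$, is surjective for some $\lambda>0$. Concretely, given $(r,s)$, we must solve $r\in Ay+Hx+\lambda y$ and $s\in Bx-y+\lambda t x$. The second relation gives $x=J_{B/(\lambda t)}((s+y)/(\lambda t))$, a Lipschitz function of $y$. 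Substituting, the first relation becomes $r\in(A+\lambda I)y+H(x(y))$.

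I would instead argue more cleanly as follows. Decompose $M_{H,A,B}=\widetilde M+S$, where $\widetilde M(y,x)=(Ay+(H-tI)x,\;Bx)$ and $S(y,x)=(tx,-y)$. The operator $S$ is bounded and $P_t$-skew, because $\inner{S z}{z}_{P_t}=t\inner{x}{y}-t\inner{y}{x}=0$, so $S$ is maximally monotone in $P_t$. The operator $\widetilde M$ is block upper-triangular with a Lipschitz off-diagonal block. Its maximality follows from the range condition: $(I+\gamma P_t^{-1}\widetilde M)$ is solved by back substitution, first $x$ via $J_{\gamma B/t}$ and then $y$ via $J_{\gamma A}$, so the range is full. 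Its monotonicity in $P_t$ is exactly the computation above. Finally, monotone plus a bounded, Lipschitz, monotone single-valued operator is maximal (\cite[Cor.~25.5]{BauschkeCombettes2017}), which gives maximality of $M_{H,A,B}$. The special case $H=tI$ gives $L=0$, so the hypothesis holds with $\mu_A=\mu_B=0$.
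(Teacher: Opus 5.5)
Your proof is correct. The monotonicity and strong-monotonicity estimates are the same quadratic-form computation as in the paper, but your maximality argument is organized differently.

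\textbf{The paper's route.} The paper writes $M_{H,A,B}=\mathcal D+\mathcal G_t$ with $\mathcal D=\diag(A-\mu_AI,B-\mu_BI)$ and $\mathcal G_t(y,x)=(\mu_Ay+Hx,\mu_Bx-y)$. The strong-monotonicity moduli are moved into the single-valued part. That part then carries the entire quadratic-form estimate, is Lipschitz and everywhere defined, and is therefore maximally monotone by \cite[Corollary~20.28]{BauschkeCombettes2017}. \cite[Corollary~25.5]{BauschkeCombettes2017} then concludes, and no range equation is ever solved.

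\textbf{Your route.} You leave $A$ and $B$ untouched inside the block upper-triangular operator $\widetilde M$. You split off only the linear coupling $S(y,x)=(tx,-y)$, which is $P_t$-skew precisely because the weight in $P_t$ matches the shift in $H-tI$. Maximality of $\widetilde M$ then follows from Minty's theorem, with its resolvent computed explicitly by back substitution: first a resolvent of $B$, then $J_{\gamma A}$. This costs an extra Minty step. In exchange, it never peels off $\mu_A,\mu_B$, and it exposes the same triangular mechanism that underlies the paper's kernel, already at the level of maximality.

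\textbf{Small repairs.}
\begin{enumerate}
\item The Minty condition in $(\cX^2,\inner{\cdot}{\cdot}_{P_t})$ is surjectivity of $I+\gamma\widetilde M$, since the identity of that space is the ordinary identity. Your remark about a $P_t$-adjoint does not justify using $I+\gamma P_t^{-1}\widetilde M$ instead.
\item That substitution is nevertheless harmless. Suppose $(p,v)$ is $P_t$-monotonically related to $\gra\widetilde M$, and surjectivity supplies $w\in\widetilde Mz$ with $P_t(p-z)=\gamma(w-v)$. Then $0\leq\inner{p-z}{v-w}_{P_t}=-\gamma\norm{v-w}^2$, so $(p,v)=(z,w)$.
\item In any case, $I+\gamma\widetilde M$ is inverted by the same back substitution: $x=J_{\gamma B}s$, then $y=J_{\gamma A}(r-\gamma(H-tI)x)$.
\item Corollary~25.5 requires $\widetilde M$ to be maximally monotone, not merely monotone as your last sentence says. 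You did establish this just before invoking it.
\end{enumerate}
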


\begin{proof}
Choose $a\in Ay$, $\widetilde a\in A\widetilde y$, $d\in Bx$, and $\widetilde d\in B\widetilde x$. Set $\Delta y=y-\widetilde y$, $\Delta x=x-\widetilde x$, $\Delta a=a-\widetilde a$, $\Delta d=d-\widetilde d$, $\Delta H=Hx-H\widetilde x$, and $\delta_t=\Lip(H-tI)$. With $\Delta z=(\Delta y,\Delta x)\T$ and $\Delta M=(\Delta a+\Delta H,\Delta d-\Delta y)\T$, we have
\[
\begin{aligned}
\inner{\Delta z}{\Delta M}_{P_t}
&=\inner{\Delta y}{\Delta a}+t\inner{\Delta x}{\Delta d}+\inner{\Delta y}{\Delta H-t\Delta x}\\
&\geq\mu_A\norm{\Delta y}^2-\delta_t\norm{\Delta y}\norm{\Delta x}+t\mu_B\norm{\Delta x}^2.
\end{aligned}
\]
The last expression is the quadratic form of the symmetric $Z$-matrix
\[
C_t:=\begin{bmatrix}\mu_A&-\delta_t/2\\-\delta_t/2&t\mu_B\end{bmatrix},
\qquad
\det C_t=t\mu_A\mu_B-\delta_t^2/4\geq0,
\]
evaluated at $(\norm{\Delta y},\norm{\Delta x})\T$. Its diagonal entries are nonnegative, so $C_t\succeq0$. Under strict inequality, \Cref{lem:z-m-matrix} (ii) gives $C_t\succ0$, and norm equivalence yields a constant $\mu_t>0$ such that $\inner{\Delta z}{\Delta M}_{P_t}\geq\mu_t\norm{\Delta z}_{P_t}^2$.

For maximality, define $\mathcal D:=\diag(A-\mu_A I,B-\mu_B I)$ and $\mathcal G_t\binom{y}{x}:=\binom{\mu_A y+Hx}{\mu_B x-y}$, so that $M_{H,A,B}=\mathcal D+\mathcal G_t$. Since $A$ and $B$ are maximally monotone and respectively $\mu_A$- and $\mu_B$-strongly monotone, $A-\mu_A I$ and $B-\mu_B I$ are maximally monotone, and hence $\mathcal D$ is maximally monotone in the $P_t$-Hilbert space. The preceding estimate shows that $\mathcal G_t$ is $P_t$-monotone under \eqref{eq:effective-threshold}; moreover, it is single-valued, Lipschitz continuous, and everywhere defined. Hence $\mathcal G_t$ is maximally monotone by \cite[Corollary~20.28]{BauschkeCombettes2017}. Since $\dom\mathcal G_t=\cX^2$, \cite[Corollary~25.5]{BauschkeCombettes2017} implies that $M_{H,A,B}=\mathcal D+\mathcal G_t$ is maximally monotone in the $P_t$-Hilbert space. Translation by $(b,0)\T$ preserves maximal monotonicity.
\end{proof}

The resolvent of a $P_t$-maximally monotone lift is firmly nonexpansive in the metric induced by $P_t$ \cite[Proposition~23.8(iii)]{BauschkeCombettes2017}. To obtain the corresponding interpretation of the triangular map, its kernel must also enter the metric. Indeed, for an invertible linear kernel $K$, the warped-resolvent equation gives
\begin{equation}
\label{eq:preconditioned-resolvent-identity}
p=(K+\widehat M)^{-1}Kz
\ \Longleftrightarrow\ K(z-p)\in\widehat Mp
\ \Longleftrightarrow\ z-p\in Np,
\qquad N:=K^{-1}\widehat M.
\end{equation}
Thus the triangular map is $J_N$, and the required metric should transfer the monotonicity of $\widehat M$ to $N$. The identity $\inner{u}{K^{-1}v}_W=\inner{u}{v}_{P_t}$ suggests $W=P_tK$. This is formalized in the following proposition.

\begin{proposition}[Firm nonexpansiveness in the monotone regime]
\label[proposition]{rem:monotone-lift-averagedness}
Let $A,B:\cX\rightrightarrows\cX$ be maximally monotone, let $b\in\cX$, and let $t,\alpha,\beta>0$ satisfy $\alpha\beta t<1$. Suppose that $H=tI$ and $\theta=1$. Set $K=K_1^{\alpha,\beta}$ and $\widehat M=M_{tI,A,B}-(b,0)\T$, and define
\begin{equation}
\label{eq:compatible-kernel-metric}
W:=P_tK=\begin{bmatrix}\alpha^{-1}I&-tI\\-tI&t\beta^{-1}I\end{bmatrix}.
\end{equation}
Then $W$ is self-adjoint and strongly positive, and $K$ is invertible with bounded inverse. Moreover, $N:=K^{-1}\widehat M$ is maximally monotone in the $W$-metric, and $T_{1,b}=J_N$ is firmly nonexpansive, and hence $1/2$-averaged, in this metric.
\end{proposition}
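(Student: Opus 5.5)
The plan is to verify the three structural claims about $W$ and $K$ by direct block computation, transfer monotonicity from $\widehat M$ in the $P_t$-metric to $N=K^{-1}\widehat M$ in the $W$-metric, and then apply the standard resolvent theory in the Hilbert space $(\cX^2,\inner{\cdot}{\cdot}_W)$.

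\textbf{Step 1: the operator $W$.} With $H=tI$ and $\theta=1$, the kernel is linear:
\[
K=\begin{bmatrix}\alpha^{-1}I&-tI\\-I&\beta^{-1}I\end{bmatrix}.
\]
Multiplying by $P_t$ gives the stated $W$, which is visibly self-adjoint. For strong positivity, its quadratic form is
\[
\alpha^{-1}\norm{y}^2-2t\inner{y}{x}+t\beta^{-1}\norm{x}^2\geq \alpha^{-1}\norm{y}^2-2t\norm{y}\norm{x}+t\beta^{-1}\norm{x}^2 .
\]
The right-hand side is the quadratic form of the symmetric $2\times2$ matrix $\begin{bmatrix}\alpha^{-1}&-t\\-t&t\beta^{-1}\end{bmatrix}$. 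Its determinant is $t(\alpha\beta)^{-1}-t^2=t(1-\alpha\beta t)/(\alpha\beta)>0$, so \Cref{lem:z-m-matrix}(ii) makes this matrix positive definite. Its smallest eigenvalue then bounds $W$ below, giving strong positivity.

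\textbf{Step 2: invertibility of $K$.} Since $P_t$ is invertible, $K=P_t^{-1}W$ has the bounded inverse $K^{-1}=W^{-1}P_t$. Here $W^{-1}$ exists because $W$ is strongly positive and self-adjoint (Lax--Milgram). As a check, the block form gives $K^{-1}$ explicitly with scalar factor $(\alpha^{-1}\beta^{-1}-t)^{-1}$.

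\textbf{Step 3: monotonicity of $N$.} For $(z,u)\in\gra N$ we have $Ku\in\widehat Mz$. Hence, for two such pairs,
\[
\inner{z-\tilde z}{u-\tilde u}_W=\inner{z-\tilde z}{P_tK(u-\tilde u)}=\inner{z-\tilde z}{Ku-K\tilde u}_{P_t}\geq0 .
\]
The inequality holds by \Cref{prop:hidden-metric-monotonicity}: with $H=tI$, $\Lip(H-tI)=0$, so \eqref{eq:effective-threshold} holds even when $\mu_A=\mu_B=0$. That proposition therefore makes $M_{tI,A,B}$, and its translate $\widehat M$, maximally monotone in the $P_t$-metric.

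\textbf{Step 4: maximality of $N$.} This is the main point requiring care, since monotonicity alone is not enough. I would use Minty's characterization in $(\cX^2,W)$: it suffices that $\operatorname{ran}(I+N)=\cX^2$, i.e. that for every $z$ some $p$ satisfies $z-p\in Np$. By \eqref{eq:preconditioned-resolvent-identity}, this is equivalent to $K z\in(K+\widehat M)p$. The triangular structure \eqref{eq:triangular-cancellation} solves this for every right-hand side by forward substitution with the full-domain single-valued resolvents $J_{\alpha A}$ and $J_{\beta B}$. Indeed, $(K+\widehat M)p\ni w$ is solved by $p_y=J_{\alpha A}(\alpha(w_1+b))$ and then $p_x=J_{\beta B}(\beta(w_2+2p_y))$. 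So $I+N$ is surjective. Minty's theorem, valid in any Hilbert space and in particular the $W$-space (an equivalent norm), then gives maximality.

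\textbf{Step 5: conclusion.} By \Cref{prop:triangular-warped-identity} and \eqref{eq:preconditioned-resolvent-identity}, $T_{1,b}=(K+\widehat M)^{-1}K=J_N$. The resolvent of a maximally monotone operator is firmly nonexpansive in the space's own inner product \cite[Proposition~23.8]{BauschkeCombettes2017}, hence it is $1/2$-averaged in the $W$-metric.
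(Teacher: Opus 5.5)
Your proof is correct. Steps 1--3 and 5 match the paper: the same scalar matrix $\begin{bmatrix}\alpha^{-1}&-t\\-t&t\beta^{-1}\end{bmatrix}$, the factorization $K=P_t^{-1}W$, and the pairing identity $\inner{z-\tilde z}{u-\tilde u}_W=\inner{z-\tilde z}{K(u-\tilde u)}_{P_t}$.

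The genuine difference is Step 4. The paper proves maximality of $N$ by transferring it through the graph correspondence $(p,v)\mapsto(p,Kv)$. A pair monotonically related to $\gra N$ in the $W$-metric becomes a pair monotonically related to $\gra\widehat M$ in the $P_t$-metric. Maximality of $\widehat M$ then finishes the argument; that maximality comes from \Cref{prop:hidden-metric-monotonicity}, whose proof uses the sum rule for a maximal monotone operator plus a Lipschitz, everywhere-defined monotone one. You instead apply Minty's theorem in the $W$-Hilbert space. You get surjectivity of $I+N$ by solving $(K+\widehat M)p\ni w$ explicitly through forward substitution, which is legitimate because $K+\widehat M$ is block lower triangular with lower-left block $-2I$, and $J_{\alpha A}$, $J_{\beta B}$ are everywhere defined.

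What each route buys:
\begin{itemize}
\item Your route needs only monotonicity of $\widehat M$ in the $P_t$-metric, which is a direct computation, not its maximality. It therefore bypasses the sum rule entirely. The triangular structure itself certifies maximality, and you get for free that $T_{1,b}$ is everywhere defined.
\item The paper's transfer argument is more abstract and never uses the triangular form. It works for any bounded invertible linear kernel with $P_tK$ self-adjoint and strongly positive, provided the lift is already known to be maximal.
\end{itemize}
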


\begin{proof}
By \Cref{prop:hidden-metric-monotonicity}, $\widehat M$ is maximally monotone in the $P_t$-metric. The choice $\theta=1$ makes the two off-diagonal blocks of $W=P_tK$ equal to $-tI$. Introduce the scalar matrix
\[
C_K:=\begin{bmatrix}\alpha^{-1}&-t\\-t&t\beta^{-1}\end{bmatrix}.
\]
The scalar symmetric $Z$-matrix $C_K$ has positive diagonal entries and $\det C_K=t(\alpha\beta)^{-1}(1-\alpha\beta t)$. Therefore, if $\alpha\beta t<1$, \Cref{lem:z-m-matrix}(ii) gives $C_K\succ0$, and $W$ is self-adjoint and strongly positive. Equivalently,
\[
\inner{z}{Wz}=\alpha^{-1}\norm{y-\alpha t x}^2+t\beta^{-1}(1-\alpha\beta t)\norm{x}^2.
\]
Consequently, $\inner{z}{\widetilde z}_W:=\inner{z}{W\widetilde z}$ defines an equivalent Hilbert inner product, and $K=P_t^{-1}W$ is invertible with bounded inverse.

For $(p_i,v_i)\in\gra N$, $i=1,2$, we have $Kv_i\in\widehat Mp_i$, and hence
\begin{equation}
\label{eq:preconditioned-monotonicity-pairing}
\inner{p_1-p_2}{v_1-v_2}_W
=\inner{p_1-p_2}{K(v_1-v_2)}_{P_t}\geq0.
\end{equation}
This proves $W$-monotonicity of $N$. It also transfers maximality: if $(p,v)$ is monotonically related to $\gra N$ in the $W$-metric, the same identity makes $(p,Kv)$ monotonically related to $\gra\widehat M$ in the $P_t$-metric. Maximality of $\widehat M$ gives $Kv\in\widehat Mp$, and therefore $v\in Np$.

It follows from \eqref{eq:preconditioned-resolvent-identity} and \cite[Proposition~23.8(iii)]{BauschkeCombettes2017} that $T_{1,b}=J_N$ is firmly nonexpansive in the $W$-metric. Explicitly, setting $p=T_{1,b}z$ and $\widetilde p=T_{1,b}\widetilde z$ gives $z-p\in Np$ and $\widetilde z-\widetilde p\in N\widetilde p$, so \eqref{eq:preconditioned-monotonicity-pairing} yields
\begin{equation}
\label{eq:triangular-firm-nonexpansiveness}
\norm{T_{1,b}z-T_{1,b}\widetilde z}_W^2
\leq\inner{T_{1,b}z-T_{1,b}\widetilde z}{z-\widetilde z}_W.
\end{equation}
Equivalently, $2T_{1,b}-I$ is nonexpansive in this metric \cite[Proposition~4.4]{BauschkeCombettes2017}, so $T_{1,b}=\tfrac12I+\tfrac12(2T_{1,b}-I)$ is $1/2$-averaged. The metric $P_tK$ thus supplies the resolvent geometry of the triangular realization.
\end{proof}

Monotonicity of the lift is not guaranteed.  The following proposition introduces a family of problems where skew factors make their resolvents contractive while the directed coupling leaves an eigenvalue with negative real part in the lift. Consequently, the lift is nonmonotone in every equivalent Hilbert inner product, but the triangular iteration still converges.

\begin{proposition}
\label[proposition]{prop:rotational-separation}
Let $m\geq1$, $\nu>0$, $R=\begin{bmatrix}0&-1\\1&0\end{bmatrix}$, $R_m=I_m\otimes R$, and, on $\cX=\R^{2m}$, set
\begin{equation}
\label{eq:rotational-separation-data}
A=B=\nu R_m,\qquad H=-I,\qquad b=0,\qquad \alpha=\beta=1,\qquad \theta=0.
\end{equation}
Then $A$ and $B$ are maximal monotone and
\begin{equation}
\label{eq:rotational-resolvent-factor}
\Lip(J_A)=\Lip(J_B)=\ell_\nu:=(1+\nu^2)^{-1/2}.
\end{equation}
For $\nu>\sqrt3$, the triangular map is a contraction in an equivalent weighted block maximum norm, whereas the lifted operator
\begin{equation}
\label{eq:rotational-lift}
M_\nu=\begin{bmatrix}\nu R_m&-I\\-I&\nu R_m\end{bmatrix}
\end{equation}
is nonmonotone in every equivalent Hilbert inner product. The original composite operator is $H+A\circ B=-(1+\nu^2)I_{2m}$, a negative scalar multiple of the identity, and is therefore nonmonotone in every equivalent Hilbert inner product.
\end{proposition}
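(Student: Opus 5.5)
The plan is to verify the four assertions of \Cref{prop:rotational-separation} in turn by direct linear-algebra computations. The main tools are the orthogonality relations $R_m\T R_m=I$ and $R_m\T=-R_m$, together with \Cref{lem:z-m-matrix} for the contraction claim.

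\emph{Factors and resolvent gains.} Since $\inner{z}{\nu R_m z}=0$ for all $z$, the operator $A=B=\nu R_m$ is monotone. Being linear, continuous and everywhere defined, it is maximal monotone. For the resolvent, expand
\[
\norm{(I+\nu R_m)z}^2=\norm{z}^2+2\nu\inner{z}{R_mz}+\nu^2\norm{R_mz}^2=(1+\nu^2)\norm{z}^2 .
\]
Hence $(1+\nu^2)^{-1/2}(I+\nu R_m)$ is orthogonal. It follows that $J_A=J_B$ is $\ell_\nu$ times an orthogonal matrix, which gives \eqref{eq:rotational-resolvent-factor} with equality.

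\emph{Contraction via the gain matrix.} With the data \eqref{eq:rotational-separation-data}, \Cref{alg:triangular} reads $y^+=J_A(y+x)$ and $x^+=J_B(x+y^+)$. Write $\Delta$ for differences of two trajectories. Then
\[
\norm{\Delta y^+}\le\ell_\nu(\norm{\Delta y}+\norm{\Delta x}),
\qquad
\norm{\Delta x^+}\le\ell_\nu\norm{\Delta x}+\ell_\nu\norm{\Delta y^+}\le\ell_\nu^2\norm{\Delta y}+(\ell_\nu+\ell_\nu^2)\norm{\Delta x}.
\]
This yields the nonnegative gain matrix
\[
G=\begin{bmatrix}\ell_\nu&\ell_\nu\\ \ell_\nu^2&\ell_\nu+\ell_\nu^2\end{bmatrix}.
\]
Apply \Cref{lem:z-m-matrix}(ii) to the $Z$-matrix $I-G$. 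Its first diagonal entry is $1-\ell_\nu>0$. Its determinant is
\[
(1-\ell_\nu)(1-\ell_\nu-\ell_\nu^2)-\ell_\nu^3=1-2\ell_\nu,
\]
which is positive exactly when $\ell_\nu<1/2$, i.e.\ $\nu>\sqrt3$. Then \Cref{lem:z-m-matrix}(i) with $\gamma=1$ provides $w>0$ with $Gw<w$. In the weighted block maximum norm $\max\{\norm{y}/w_1,\norm{x}/w_2\}$, the componentwise bounds give a contraction with factor $\max_i (Gw)_i/w_i<1$. This norm is equivalent to the Euclidean one.

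\emph{Nonmonotonicity of the lift.} Suppose $M_\nu$ were monotone in $\inner{\cdot}{P\cdot}$ for some $P\succ0$. Then $PM_\nu+M_\nu\T P\succeq0$. Extending to $\mathbb C^{4m}$ Hermitianly, any eigenpair $M_\nu v=\lambda v$ gives $2\RePart(\lambda)\,v^*Pv=v^*(PM_\nu+M_\nu\T P)v\ge0$, so $\RePart\lambda\ge0$. It therefore suffices to exhibit an eigenvalue with negative real part. Take a complex eigenvector $u$ of $R_m$ with $R_mu=iu$. On vectors $(au,cu)$ the matrix $M_\nu$ acts as $\begin{bmatrix}\nu i&-1\\-1&\nu i\end{bmatrix}$, whose eigenvalues are $\nu i\pm1$. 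The eigenvalue $-1+\nu i$ then gives the contradiction. This Hermitian-extension step is the only point needing care: one must check that real positive semidefiniteness of $PM_\nu+M_\nu\T P$ passes to complex vectors, which holds because the matrix is real symmetric.

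\emph{The composite.} Since $R_m^2=-I$, we get $H+A\circ B=-I+\nu^2R_m^2=-(1+\nu^2)I$. For any $P\succ0$ and $z\ne0$, $\inner{z}{-(1+\nu^2)Pz}<0$, so the composite is nonmonotone in every equivalent Hilbert inner product.
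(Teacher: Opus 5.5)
Your proposal is correct and follows the paper's proof essentially step for step. It uses the same comparison matrix with $\det(I-L_\nu)=1-2\ell_\nu$, the same test $PM_\nu+M_\nu\T P\succeq0$ refuted by the eigenvalue $-1+\mathrm i\nu$, and the same identity $H+A\circ B=-(1+\nu^2)I$. The differences are cosmetic and none affects validity: you obtain $\Lip(J_A)=\ell_\nu$ from norm preservation rather than the paper's explicit inverse $(I-\nu R_m)/(1+\nu^2)$, you build the eigenvector as $(u,u)$ with $R_mu=\mathrm iu$ rather than via the block transformation $U$, and you take the Perron weight at $\gamma=1$ rather than $\gamma\in(\rho(L_\nu),1)$.
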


\begin{proof}
Since $R_m\T=-R_m$ and $R_m^2=-I$, we have $\inner{h}{\nu R_mh}=0$ for every $h$. Thus $A$ and $B$ are monotone; they are also continuous with full domain, so they are maximal monotone \cite[Corollary~20.28]{BauschkeCombettes2017}. Moreover, the common resolvent has the explicit form
\begin{equation}
\label{eq:rotational-resolvent-explicit}
Q_\nu:=(I+\nu R_m)^{-1}=\frac{I-\nu R_m}{1+\nu^2},
\qquad
Q_\nu\T Q_\nu=\frac1{1+\nu^2}I,
\end{equation}
which proves \eqref{eq:rotational-resolvent-factor}.

For these data, the two updates are $y^+=Q_\nu(y+x)$ and $x^+=Q_\nu(x+y^+)$. Therefore, for the differences of any two inputs,
\[
\norm{\Delta y^+}\leq\ell_\nu(\norm{\Delta y}+\norm{\Delta x}),
\qquad
\norm{\Delta x^+}\leq\ell_\nu\norm{\Delta x}+\ell_\nu\norm{\Delta y^+},
\]
and substitution gives
\begin{equation}
\label{eq:rotational-comparison-matrix}
\binom{\norm{\Delta y^+}}{\norm{\Delta x^+}}
\leq L_\nu\binom{\norm{\Delta y}}{\norm{\Delta x}},
\qquad
L_\nu:=\begin{bmatrix}\ell_\nu&\ell_\nu\\\ell_\nu^2&\ell_\nu+\ell_\nu^2\end{bmatrix}.
\end{equation}
In particular,
\[
I-L_\nu=\begin{bmatrix}1-\ell_\nu&-\ell_\nu\\-\ell_\nu^2&1-\ell_\nu-\ell_\nu^2\end{bmatrix},
\qquad
\det(I-L_\nu)=(1-\ell_\nu)(1-\ell_\nu-\ell_\nu^2)-\ell_\nu^3=1-2\ell_\nu.
\]
If $\nu>\sqrt3$, then $0<\ell_\nu<1/2$, so $1-\ell_\nu>0$, $1-\ell_\nu-\ell_\nu^2>1/4$, and $\det(I-L_\nu)>0$. All three principal minors are positive, and \Cref{lem:z-m-matrix} gives $\rho(L_\nu)<1$. Conversely, $\rho(L_\nu)<1$ implies $\det(I-L_\nu)>0$, so this comparison-matrix condition is equivalent to $\nu>\sqrt3$.

Choose $\gamma\in(\rho(L_\nu),1)$ and $w=(\gamma I-L_\nu)^{-1}\one$. By \Cref{lem:z-m-matrix}(i), $w>0$ and $L_\nu w<\gamma w$. Define $\norm{(y,x)\T}_{w,\infty}:=\max\{\norm{y}/w_1,\norm{x}/w_2\}$. If $\delta=\norm{z-\widetilde z}_{w,\infty}$, then the input-size vector is bounded by $w\delta$, and \eqref{eq:rotational-comparison-matrix} bounds the output-size vector by $L_\nu w\delta\leq\gamma w\delta$. Hence $\norm{T_{0,0}z-T_{0,0}\widetilde z}_{w,\infty}\leq\gamma\norm{z-\widetilde z}_{w,\infty}$.

To test monotonicity of the lift in an arbitrary equivalent Hilbert metric, let $P=P\T\succ0$. A real linear map $M_\nu$ is $P$-monotone exactly when $PM_\nu+M_\nu\T P\succeq0$. The block transformation
\begin{equation}
\label{eq:rotational-lift-diagonalization}
U:=\frac1{\sqrt2}\begin{bmatrix}I&I\\I&-I\end{bmatrix},
\qquad
U\T M_\nu U=\diag(\nu R_m-I,\nu R_m+I)
\end{equation}
shows that the eigenvalues are $-1\pm\mathrm i\nu$ and $1\pm\mathrm i\nu$. For a complex eigenvector $v\ne0$ with $M_\nu v=(-1+\mathrm i\nu)v$, where $v^*$ denotes conjugate transpose,
\begin{equation}
\label{eq:rotational-metric-obstruction}
v^*(PM_\nu+M_\nu\T P)v
=2\RePart(-1+\mathrm i\nu)\,v^*Pv=-2v^*Pv<0.
\end{equation}
This contradicts positive semidefiniteness for every $P\succ0$. Finally, $R_m^2=-I$ gives $H+A\circ B=-I+\nu^2R_m^2=-(1+\nu^2)I$, and for every $G=G\T\succ0$ and $h\ne0$, $\inner{h}{(H+A\circ B)h}_G=-(1+\nu^2)\norm{h}_G^2<0$.
\end{proof}

The monotone regime therefore admits a standard averaged-resolvent interpretation, whereas the rotational family shows that convergence can still be guaranteed in the absence of monotonicity. This separation motivates the direct gain analysis of the explicit triangular map developed in \Cref{sec:gain-analysis}.

\subsection{Proximal realization of coupled convex equilibria}
\label{subsec:proximal-response}

This subsection shows how the triangular splitting iteration for coupled convex equilibria can be implemented through two regularized convex minimization problems, and explains how the parameter $\theta$ connects Jacobi, Gauss--Seidel, and extrapolated updates.

For the coupled convex optimisation equilibrium model of \Cref{sec:problem-classes}, set
\begin{equation}
\label{eq:subdiff-dictionary}
A=\partial g,
\qquad
B=\partial f.
\end{equation}
Then \Cref{alg:triangular} is implemented by two regularized convex optimisation subproblems.

\begin{proposition}[Triangular proximal updates]
\label[proposition]{prop:proximal-response-game}
Under \eqref{eq:subdiff-dictionary}, one iteration is
\begin{subequations}
\label{eq:proximal-response-dynamics}
\begin{align}
y^{k+1}
&=\argmin_v\left\{\mathcal J_g(v;x^k)+\frac{\norm{v-y^k}^2}{2\alpha}\right\},
\label{eq:proximal-response-y}\\
x^{k+1}
&=\argmin_u\left\{\mathcal J_f(u;\bar y^k)+\frac{\norm{u-x^k}^2}{2\beta}\right\},
\qquad
\bar y^k=(1+\theta)y^{k+1}-\theta y^k.
\label{eq:proximal-response-x}
\end{align}
\end{subequations}
Thus $\theta=-1$ gives a proximal Jacobi update, $\theta=0$ gives a proximal Gauss--Seidel update, and $\theta>0$ extrapolates the newly computed first-block update.
\end{proposition}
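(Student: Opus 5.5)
The plan is to use the standard identification of the resolvent of a subdifferential with the proximal map, $J_{\lambda\partial h}=\prox_{\lambda h}$ for $h\in\Gamma_0(\cX)$ and $\lambda>0$. Then each line of \Cref{alg:triangular} is rewritten, through Fermat's rule, as the minimization of a strongly convex function. No convergence or monotonicity of the lift is needed; the statement is purely a reformulation of one step.

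\emph{First update.} By \eqref{eq:subdiff-dictionary}, $y^{k+1}=J_{\alpha\partial g}(y^k-\alpha Hx^k+\alpha b)$. Equivalently,
\[
y^k-\alpha Hx^k+\alpha b-y^{k+1}\in\alpha\,\partial g(y^{k+1}),
\]
that is, $0\in\partial g(y^{k+1})+Hx^k-b+\alpha^{-1}(y^{k+1}-y^k)$. The function $v\mapsto\mathcal J_g(v;x^k)+\norm{v-y^k}^2/(2\alpha)$ is the sum of $g$ and a smooth quadratic with gradient $Hx^k-b+\alpha^{-1}(v-y^k)$. Its subdifferential is therefore given exactly by this expression; the sum rule applies because the quadratic part is continuous and everywhere finite. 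Since the objective is $\alpha^{-1}$-strongly convex, it has a unique minimizer. Fermat's rule then identifies that minimizer with $y^{k+1}$, which gives \eqref{eq:proximal-response-y}.

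\emph{Second update.} The same argument applies with $x^{k+1}=J_{\beta\partial f}(x^k+\beta\bar y^k)$. This inclusion is equivalent to $0\in\partial f(x^{k+1})-\bar y^k+\beta^{-1}(x^{k+1}-x^k)$. That is precisely the optimality condition for minimizing $\mathcal J_f(u;\bar y^k)+\norm{u-x^k}^2/(2\beta)$ over $u$, where $\mathcal J_f(u;\bar y)=f(u)-\inner{\bar y}{u}$ by \eqref{eq:convex-game-objectives}. This gives \eqref{eq:proximal-response-x}.

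\emph{Interpretation of $\theta$.} This is read off from $\bar y^k=(1+\theta)y^{k+1}-\theta y^k$. For $\theta=-1$ we get $\bar y^k=y^k$, so both blocks use only previous iterates and can be computed in parallel (Jacobi). For $\theta=0$ we get $\bar y^k=y^{k+1}$, so the second block uses the freshly computed first block (Gauss--Seidel). For $\theta>0$ we get $\bar y^k=y^{k+1}+\theta(y^{k+1}-y^k)$, which is an extrapolation.

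No step is a genuine obstacle. The only point needing care is the justification of the subdifferential sum rule for $g$ plus a continuous quadratic. Well-posedness of each argmin is already guaranteed by strong convexity, consistent with \Cref{rem:step-wellposed}.
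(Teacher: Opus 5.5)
Your proposal is correct and follows the same route as the paper, which simply substitutes $J_{\alpha\partial g}=\prox_{\alpha g}$ and $J_{\beta\partial f}=\prox_{\beta f}$ into \Cref{alg:triangular} and applies the proximal minimization formula. Your explicit Fermat-rule computations, with the sum rule for $g$ plus a continuous quadratic, are exactly the details that the paper's one-line argument leaves implicit.
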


\begin{proof}
Use $J_{\alpha\partial g}=\prox_{\alpha g}$ and $J_{\beta\partial f}=\prox_{\beta f}$ in \Cref{alg:triangular} and apply the proximal minimization formula.
\end{proof}

\subsection{Specializations of Algorithm~\ref{alg:triangular}}
\label{subsec:classical-specializations}

Algorithm \ref{alg:triangular} specializes to a number of existing algorithms with particular problem data and parameter choices. These specializations are summarized in \Cref{fig:algorithm-family}. 

\emph{Forward--backward branch.} Setting $A=I$, $\alpha=1$, and $\theta=1$ yields the forward--backward iteration after elimination of the intermediate variable, with the proximal-point, gradient, Krasnosel'skii--Mann, and Richardson iterations obtained under the corresponding choices of $B$ and $H$ \cite{Mann1953,BauschkeCombettes2017,RyuYin2022}. 

\emph{Momentum branch.} The choice $A=I$, $B=0$, and $\theta=0$ gives a two-step momentum representation, including Polyak's heavy-ball and inertial fixed-point forms \cite{Polyak1964,Mainge2008}.

\emph{Identity-coupled primal--dual branch.} With $H=I$ and $B=C^{-1}$, the triangular map reduces to the identity-coupled primal--dual family. The reflected endpoint $\theta=1$ gives the Chambolle--Pock/PDHG form, while the additional scaling $\alpha\beta=1$ yields Douglas--Rachford splitting after the standard linear change of variables \cite{ArrowHurwiczUzawa1958,ChambollePock2011,Condat2013,Vu2013}. Douglas--Rachford splitting on the Fenchel dual and ADMM on the primal are equivalent representations of the Eckstein--Bertsekas construction \cite{EcksteinBertsekas1992,OConnorVandenberghe2020}. Finally, setting $H=b=0$ generates the pure-composition branch developed in the next subsection.

\begin{figure}[t]
\centering
\resizebox{\linewidth}{!}{%
\begin{tikzpicture}[
  font=\footnotesize,
  >=stealth,
  master/.style={draw, thick, rounded corners=3pt, fill=black!4,
                 align=center, inner sep=5pt},
  family/.style={draw, rounded corners=2pt, fill=black!4, align=flush center,
                 text width=3.5cm, minimum height=1.25cm, inner sep=2pt},
  method/.style={draw, rounded corners=2pt, align=flush center,
                 text width=3.25cm, minimum height=1.25cm, inner sep=2pt},
]
\def\csep{3.75}   
\def\famy{-1.65}  
\def\firsty{-3.25} 
\def\rsep{1.4}    
\def\algnode#1#2#3{%
  \node[method] (m#1-#2) at ({(#1-1)*\csep},{\firsty-(#2-1)*\rsep}) {#3};}
\def\algframe#1#2{
  \draw[rounded corners=3pt]
    ([xshift=-4pt,yshift=4pt]m#1-1.north west) rectangle
    ([xshift=4pt,yshift=-4pt]m#1-#2.south east);
  \coordinate (g#1) at ([yshift=4pt]m#1-1.north);}

\node[master] (top) at ({1.5*\csep},0)
  {\textbf{Triangular master map}\\[2pt] $0 \in Hx + A(Bx) - b$};

\node[family] (f1) at ({0*\csep},\famy)
  {$A = I,\ \alpha = 1,\ \theta = 1$\\[2pt]\textbf{Forward--backward}\\\textbf{splitting}};
\node[family] (f2) at ({1*\csep},\famy)
  {$A = I,\ B = 0,\ \theta = 0$\\[2pt]\textbf{Momentum}\\\textbf{representation}};
\node[family] (f3) at ({2*\csep},\famy)
  {$H = I,\ B = C^{-1}$\\[2pt]\textbf{Identity-coupled}\\\textbf{primal--dual family}};
\node[family] (f4) at ({3*\csep},\famy)
  {$H = 0,\ b = 0$\\[2pt]\textbf{Pure-composition}\\\textbf{family}};

\algnode{1}{1}{proximal-point\\algorithm}
\algnode{1}{2}{gradient descent}
\algnode{1}{3}{Krasnosel'ski\u{\i}--Mann\\and Picard iterations}
\algnode{1}{4}{Richardson iteration}

\algnode{2}{1}{Polyak heavy-ball\\method}
\algnode{2}{2}{inertial Krasnosel'ski\u{\i}--\\Mann-type iteration}

\algnode{3}{1}{Arrow--Hurwicz method}
\algnode{3}{2}{Chambolle--Pock /\\primal--dual hybrid\\gradient method}
\algnode{3}{3}{Douglas--Rachford\\splitting\\$\theta = 1,\ \alpha\beta = 1$}
\algnode{3}{4}{alternating direction\\method of multipliers}

\algnode{4}{1}{original composition\\iteration\\$\theta = -1$}
\algnode{4}{2}{sequential composition\\iteration\\$\theta = 0$}
\algnode{4}{3}{reflected composition\\iteration\\$\theta = 1$}
\algnode{4}{4}{subdifferential\\composition models}

\algframe{1}{4}
\algframe{2}{2}
\algframe{3}{4}
\algframe{4}{4}

\foreach \i in {1,...,4} {
  \draw[->] (top.south) -- (f\i.north);
  \draw[->] (f\i.south) -- (g\i);
}
\end{tikzpicture}%
}
\caption{Specializations of Algorithm~\ref{alg:triangular}. The upper branches recover established splitting schemes through exact choices of the problem data and algorithmic parameters, while $H=b=0$ generates the pure-composition family.}
\label{fig:algorithm-family}
\end{figure}

\Cref{fig:algorithm-family} records the algebraic specialization structure of the triangular family. The recovered branches retain their standard convergence theories under their native assumptions, while \Cref{sec:gain-analysis} supplies an additional contraction certificate whenever the corresponding component gains satisfy the Schur condition.

\subsection{New factorwise algorithms for pure compositions}
\label{subsec:pure-kernels}

The preceding specializations recover established algorithms by choosing particular problem data. A distinct branch is obtained by setting $H=b=0$: the method then acts directly on the composition $A\circ B$ without converting it to a monotone sum or linearizing either factor. This yields a factorwise algorithm family whose only implicit operations are $J_{\alpha A}$ and $J_{\beta B}$. The construction is one of the principal new consequences of the triangular kernel.

The parameter $\theta$ selects the intermediate signal entering the second resolvent, as summarized in Table~\ref{tab:kernel-endpoints}.
\begin{table}[H]
\centering
\caption{Signal transmission in the pure-composition branch.}
\label{tab:kernel-endpoints}
\begin{tabular}{@{}c c l@{}}
\toprule
$\theta$ & transmitted signal $\bar y^k$ & interpretation \\
\midrule
$-1$ & $y^k$ & previous state composition update \\
$0$ & $y^{k+1}$ & sequential composition update \\
$1$ & $2y^{k+1}-y^k$ & reflected composition update \\
\bottomrule
\end{tabular}
\end{table}

The corresponding problem is
\begin{equation}
\label{eq:pure-composition}
0\in A(Bx)
\end{equation}
and the factorwise family
\begin{subequations}
\label{eq:pure-family}
\begin{align}
y^{k+1}&=J_{\alpha A}y^k,
\label{eq:pure-family-y}\\
x^{k+1}&=J_{\beta B}\bigl(x^k+\beta((1+\theta)y^{k+1}-\theta y^k)\bigr).
\label{eq:pure-family-x}
\end{align}
\end{subequations}

The cases $\theta=-1$ and $\theta=0$ differ only by an index shift of the variable entering from the first row. Indeed, setting $\widetilde y^k:=y^{k+1}$ transforms the $\theta=0$ iteration into the $\theta=-1$ form with initialization $\widetilde y^0=J_{\alpha A}y^0$, and hence yields the same $x$-sequence under the corresponding initialization. The endpoint $\theta=1$ instead transmits the reflected state $2y^{k+1}-y^k$. All three parameter choices share the same fixed-point set, as shown in the following proposition.

\begin{proposition}[Common fixed points of the composition family]
\label[proposition]{prop:pure-fixed}
Define
\begin{equation}
\label{eq:pure-solution-set}
\mathcal S_{A,B}:=\{(y,x):0\in Ay,\ y\in Bx\}.
\end{equation}
Then for every $\theta\in\R$, the fixed points of \eqref{eq:pure-family} are exactly $\mathcal S_{A,B}$. Hence the second component $x$ solves \eqref{eq:pure-composition}, and every solution admits a lifted fixed point.
\end{proposition}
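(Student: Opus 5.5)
The plan is to obtain the statement as the special case $H=0$, $b=0$ of \Cref{prop:master-fixed-points}, and to record the short direct argument alongside it. With $H=0$ and $b=0$, the first line of \Cref{alg:triangular} reads $y^{k+1}=J_{\alpha A}y^k$. The remaining lines coincide with \eqref{eq:pure-family-x}, so the map defined by \eqref{eq:pure-family} is exactly $T_{\theta,0}$ for the data $(A,B,0,0)$. \Cref{prop:master-fixed-points} then gives $\Fix T_{\theta,0}=\{(y,x):0\in Ay+0,\ y\in Bx\}$, which is $\mathcal S_{A,B}$. This holds for every $\alpha,\beta>0$ and every $\theta\in\R$.

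For the direct verification, first suppose $(y,x)$ is a fixed point. Then $y=J_{\alpha A}y$, which is equivalent to $y\in y+\alpha Ay$, that is, $0\in Ay$. Since $y^{k+1}=y^k=y$, the transmitted signal is $\bar y=(1+\theta)y-\theta y=y$, for any $\theta$. The second line then gives $x=J_{\beta B}(x+\beta y)$, which is equivalent to $x+\beta y\in x+\beta Bx$, that is, $y\in Bx$.

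For the converse, take $(y,x)\in\mathcal S_{A,B}$ and reverse each equivalence. Single-valuedness of the resolvents (Minty, \Cref{rem:step-wellposed}) ensures that the iteration returns exactly $(y,x)$.

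For the final assertions: if $(y,x)\in\mathcal S_{A,B}$, then $y\in Bx$ and $0\in Ay$, so $0\in A(Bx)$ by the definition of the set-valued composition. Conversely, if $0\in A(Bx)$, there is some $y\in Bx$ with $0\in Ay$, and $(y,x)$ is then a lifted fixed point.

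There is no real obstacle. The only point needing care is that $\theta$ drops out at a fixed point because the two $y$-iterates coincide. This is what makes the fixed-point set independent of $\theta$, even though $\theta$ changes the dynamics.
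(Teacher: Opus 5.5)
Your proposal is correct and follows the paper's argument: at a fixed point the transmitted signal $\bar y=(1+\theta)y-\theta y$ reduces to $y$, so the two resolvent identities become $0\in Ay$ and $y\in Bx$, and the converse comes from reversing these equivalences. Your observation that the statement is the case $H=b=0$ of \Cref{prop:master-fixed-points} is also valid, and it rests on the same argument.
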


\begin{proof}
At a fixed point the transmitted signal equals $y$, so the two resolvent identities are equivalent to $0\in Ay$ and $y\in Bx$.
\end{proof}

\section{Gain-matrix convergence and phase-resolved tightness}
\label{sec:certificate-calibration}
\label{sec:gain-analysis}

The kernel construction determines an explicit fixed-point map. Its convergence is governed by the gains propagated through the two resolvent rows. In this section, we develop a comparison matrix argument for proving convergence. To do so, we use the componentwise small-gain viewpoint common in the study of interconnected systems, with Perron scaling converting a nonnegative gain matrix into a contraction certificate \cite{Siljak1978,JiangTeelPraly1994}. We first prove a global convergence theorem, then characterize sharpness of the conditions, and finally calibrate the certificate on nonsmooth proximal instances.

\subsection{Global two-block contraction}
\label{subsec:global-contraction}

For $z=(z_1,z_2)\T\in\cX^2$, define
\begin{equation}
\label{eq:block-size-map}
e(z):=(\norm{z_1},\norm{z_2})\T,
\qquad
\norm{z}_{w,\infty}:=\max_i\frac{\norm{z_i}}{w_i},
\qquad
\norm{z}_{P,2}^2:=p_1\norm{z_1}^2+p_2\norm{z_2}^2,
\end{equation}
where $w\in\Rpp^2$ and $P=\diag(p_1,p_2)\succ0$. It is a standard result (see e.g. \cite{BauschkeCombettes2017}) that, for a $\mu$-strongly monotone maximal operator,
\begin{equation}
\label{eq:resolvent-strong-bound}
\Lip(J_{\alpha A})\leq(1+\alpha\mu)^{-1}.
\end{equation}

\begin{theorem}[Global two-block small-gain contraction]
\label[theorem]{thm:global}
Assume that $H$ is $L_H$-Lipschitz and that
\begin{equation}
\label{eq:component-lipschitz}
\Lip(J_{\alpha A})\leq\ell_A,
\qquad
\Lip(J_{\beta B})\leq\ell_B,
\qquad
\ell_A,\ell_B\in[0,1].
\end{equation}
For $\theta\in[-1,1]$, define
\begin{equation}
\label{eq:master-gain}
L_\theta:=
\begin{bmatrix}
\ell_A&\alpha\ell_A L_H\\
\beta\ell_B((1+\theta)\ell_A+|\theta|)&
\ell_B(1+\alpha\beta(1+\theta)\ell_A L_H)
\end{bmatrix},
\end{equation}
and suppose
\begin{equation}
\label{eq:master-schur}
\rho(L_\theta)<1.
\end{equation}
Then:
\begin{enumerate}
\item for every $\gamma\in(\rho(L_\theta),1)$,
\begin{equation}
\label{eq:perron-contraction}
w_\gamma:=(\gamma I-L_\theta)^{-1}\one\in\Rpp^2,
\qquad
\norm{T_{\theta,b}z-T_{\theta,b}\widetilde z}_{w_\gamma,\infty}
\leq\gamma\norm{z-\widetilde z}_{w_\gamma,\infty};
\end{equation}
\item with
\begin{equation}
\label{eq:hilbert-weight-construction}
r:=(I-L_\theta)^{-1}\one,
\quad
s:=(I-L_\theta\T)^{-1}\one,
\quad
P:=\diag(s_1/r_1,s_2/r_2),
\end{equation}
\begin{equation}
\label{eq:hilbert-rate-construction}
\eta_r:=\max_i\frac{(L_\theta r)_i}{r_i},
\quad
\eta_s:=\max_i\frac{(L_\theta\T s)_i}{s_i},
\quad
q:=\sqrt{\eta_r\eta_s}<1,
\end{equation}
one has
\begin{equation}
\label{eq:hilbert-contraction}
\norm{T_{\theta,b}z-T_{\theta,b}\widetilde z}_{P,2}
\leq q\norm{z-\widetilde z}_{P,2};
\end{equation}
\item $T_{\theta,b}$ has a unique fixed point $z^\star$, and the Picard iterates satisfy
\begin{equation}
\label{eq:linear-convergence}
\norm{z^k-z^\star}_{P,2}\leq q^k\norm{z^0-z^\star}_{P,2},
\qquad
\norm{z^k-z^\star}_{P,2}
\leq\frac{\norm{z^{k+1}-z^k}_{P,2}}{1-q};
\end{equation}
moreover,
\begin{equation}
\label{eq:componentwise-residual-bound}
e(z-z^\star)\leq(I-L_\theta)^{-1}e(z-T_{\theta,b}z).
\end{equation}
\end{enumerate}
\end{theorem}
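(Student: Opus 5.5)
The plan is to derive a componentwise comparison inequality $e(T_{\theta,b}z-T_{\theta,b}\widetilde z)\leq L_\theta\, e(z-\widetilde z)$ directly from the triangular structure of \Cref{alg:triangular}. All three conclusions then follow from \Cref{lem:z-m-matrix} and standard Perron-type arguments, in the same way as in the proof of \Cref{prop:rotational-separation}.

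\emph{Step 1 (comparison inequality).} Write $\Delta y=y-\widetilde y$ and $\Delta x=x-\widetilde x$, with $\Delta y^+$ and $\Delta x^+$ the corresponding output differences. Since $b$ cancels, nonexpansiveness with constant $\ell_A$ gives
\[
\norm{\Delta y^+}\leq\ell_A\bigl(\norm{\Delta y}+\alpha L_H\norm{\Delta x}\bigr).
\]
This is the first row of $L_\theta$. For the second row, I use
\[
\norm{\Delta x^+}\leq\ell_B\bigl(\norm{\Delta x}+\beta\norm{(1+\theta)\Delta y^+-\theta\Delta y}\bigr).
\]
I bound the transmitted term by $(1+\theta)\norm{\Delta y^+}+|\theta|\norm{\Delta y}$, using $1+\theta\geq0$ for $\theta\in[-1,1]$, and then substitute the first-row estimate. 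The coefficient of $\norm{\Delta y}$ becomes $\beta\ell_B((1+\theta)\ell_A+|\theta|)$. The coefficient of $\norm{\Delta x}$ becomes $\ell_B(1+\alpha\beta(1+\theta)\ell_AL_H)$. This reproduces \eqref{eq:master-gain} exactly.

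\emph{Step 2 (weighted max-norm).} Apply \Cref{lem:z-m-matrix}(i) with $\gamma\in(\rho(L_\theta),1)$. This gives $w_\gamma>0$ and $L_\theta w_\gamma=\gamma w_\gamma-\one<\gamma w_\gamma$. If $\delta=\norm{z-\widetilde z}_{w_\gamma,\infty}$, then $e(z-\widetilde z)\leq\delta w_\gamma$. Since $L_\theta\geq0$, it preserves the entrywise order, so the output size is at most $\delta L_\theta w_\gamma\leq\gamma\delta w_\gamma$. This proves \eqref{eq:perron-contraction}.

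\emph{Step 3 (diagonal Hilbert weight).} This is the step I expect to need the most care. Because $\rho(L_\theta)<1$, the vectors $r$ and $s$ are positive by \Cref{lem:z-m-matrix}(i). The same lemma gives $L_\theta r=r-\one<r$ and $L_\theta^{\mathsf T}s<s$, so $\eta_r,\eta_s<1$ and hence $q<1$.

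Set $u=e(z-\widetilde z)$ and $v=e(T_{\theta,b}z-T_{\theta,b}\widetilde z)\leq L_\theta u$. I need
\[
\sum_i \frac{s_i}{r_i}(L_\theta u)_i^2\leq \eta_r\eta_s\sum_j\frac{s_j}{r_j}u_j^2 .
\]
I would prove this by Cauchy--Schwarz in the form
\[
(L u)_i^2\leq\Bigl(\sum_j L_{ij}r_j\Bigr)\Bigl(\sum_j L_{ij}u_j^2/r_j\Bigr)\leq \eta_r r_i\sum_j L_{ij}u_j^2/r_j .
\]
Multiplying by $s_i/r_i$ and summing over $i$ gives $\eta_r\sum_j (u_j^2/r_j)(L^{\mathsf T}s)_j\leq\eta_r\eta_s\sum_j s_ju_j^2/r_j$. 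This is the classical Schur test, and it yields \eqref{eq:hilbert-contraction}.

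\emph{Step 4 (fixed point, rate, residuals).} The space $\cX^2$ with $\norm{\cdot}_{P,2}$ is complete, because the norm is equivalent to the product norm. Banach's theorem then gives a unique fixed point $z^\star$ and the rate $q^k$. The a posteriori bound follows from the triangle inequality:
\[
\norm{z^k-z^\star}\leq\norm{z^k-z^{k+1}}+q\norm{z^k-z^\star}.
\]
For \eqref{eq:componentwise-residual-bound}, I write $z-z^\star=(z-Tz)+(Tz-Tz^\star)$. Componentwise this gives $e(z-z^\star)\leq e(z-Tz)+L_\theta e(z-z^\star)$, so $(I-L_\theta)e(z-z^\star)\leq e(z-Tz)$. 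Multiplying by the entrywise nonnegative matrix $(I-L_\theta)^{-1}$ from \Cref{lem:z-m-matrix}(i) completes the proof.
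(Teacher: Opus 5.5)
Your proposal is correct and follows essentially the same argument as the paper. Both use the same four ingredients: the componentwise gain inequality from the two triangular rows; the Perron weight $w_\gamma=(\gamma I-L_\theta)^{-1}\one$ for the block-maximum contraction; the weighted Cauchy--Schwarz (Schur test) estimate with $r,s$ for the diagonal Hilbert weight; and Banach--Picard together with the entrywise nonnegative inverse $(I-L_\theta)^{-1}$ for the rate and residual bounds.
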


\begin{proof}
Let $z=(y,x)\T$ and $\widetilde z=(\widetilde y,\widetilde x)\T$. The first row of \Cref{alg:triangular} gives
\begin{equation}
\label{eq:first-row-gain}
\norm{y^+-\widetilde y^+}
\leq\ell_A\norm{y-\widetilde y}+\alpha\ell_A L_H\norm{x-\widetilde x}.
\end{equation}
Since $1+\theta\geq0$, the second row satisfies
\begin{align}
\norm{x^+-\widetilde x^+}
&\leq\ell_B\norm{x-\widetilde x}
 +\beta\ell_B\bigl((1+\theta)\norm{y^+-\widetilde y^+}+|\theta|\norm{y-\widetilde y}\bigr)\notag\\
&\leq\beta\ell_B((1+\theta)\ell_A+|\theta|)\norm{y-\widetilde y}
 +\ell_B(1+\alpha\beta(1+\theta)\ell_A L_H)\norm{x-\widetilde x}.
\label{eq:second-row-gain}
\end{align}
Hence
\begin{equation}
\label{eq:componentwise-master-gain}
e(T_{\theta,b}z-T_{\theta,b}\widetilde z)\leq L_\theta e(z-\widetilde z).
\end{equation}
For $\gamma>\rho(L_\theta)$, \Cref{lem:z-m-matrix} (i) gives $w_\gamma>0$ and $L_\theta w_\gamma=\gamma w_\gamma-\one<\gamma w_\gamma$. If $\delta=\norm{z-\widetilde z}_{w_\gamma,\infty}$, then $e(z-\widetilde z)\leq w_\gamma\delta$, so \eqref{eq:componentwise-master-gain} yields $e(T_{\theta,b}z-T_{\theta,b}\widetilde z)\leq\gamma w_\gamma\delta$. This is \eqref{eq:perron-contraction}.

The same lemma applied to $L_\theta$ and $L_\theta\T$ with $\gamma=1$ gives $r,s>0$, while $L_\theta r=r-\one$ and $L_\theta\T s=s-\one$ imply $0\leq\eta_r,\eta_s<1$. For $d\in\Rplus^2$, weighted Cauchy--Schwarz first gives
\[
\bigl((L_\theta d)_i\bigr)^2
=\left(\sum_j\sqrt{(L_\theta)_{ij}r_j}\sqrt{\frac{(L_\theta)_{ij}}{r_j}}\,d_j\right)^2
\leq(L_\theta r)_i\sum_j(L_\theta)_{ij}\frac{d_j^2}{r_j}.
\]
Multiplication by $s_i/r_i$, summation over $i$, and the bounds $L_\theta r\leq\eta_r r$ and $L_\theta\T s\leq\eta_s s$ therefore yield
\begin{equation}
\label{eq:quadratic-chain}
(L_\theta d)\T P(L_\theta d)
\leq\eta_r\sum_{i,j}s_i(L_\theta)_{ij}\frac{d_j^2}{r_j}
=\eta_r\sum_j(L_\theta\T s)_j\frac{d_j^2}{r_j}
\leq\eta_r\eta_s d\T P d.
\end{equation}
Taking $d=e(z-\widetilde z)$ proves \eqref{eq:hilbert-contraction}. Banach--Picard gives uniqueness and \eqref{eq:linear-convergence}. Finally, the triangle inequality and \eqref{eq:componentwise-master-gain} give $(I-L_\theta)e(z-z^\star)\leq e(z-T_{\theta,b}z)$. Multiplication by the entrywise nonnegative inverse $(I-L_\theta)^{-1}$ from \Cref{lem:z-m-matrix} (i) preserves this inequality and proves \eqref{eq:componentwise-residual-bound}.
\end{proof}

\Cref{app:gain} extends these constructions to arbitrary finite block gains and shows that diagonal rescaling of the block maximum norm achieves the Perron root as its optimal contraction factor. The appendix also gives the corresponding diagonal Hilbert-product construction.

The following proposition gives a characterization of the Schur criterion in Theorem~\ref{thm:global} in terms of the characteristics of the individual operators, by exploiting the fact that $I-L_\theta$ is a nonsingular $M$-matrix.

\begin{proposition}[Exact scalar Schur criterion]
\label[proposition]{prop:schur-criterion}
Let
\begin{equation}
\label{eq:chi-theta}
\chi_\theta:=1+\theta+|\theta|
=
\begin{cases}
1,&-1\leq\theta\leq0,\\
1+2\theta,&0\leq\theta\leq1.
\end{cases}
\end{equation}
Then
\begin{equation}
\label{eq:scalar-schur-condition}
\rho(L_\theta)<1
\quad\Longleftrightarrow\quad
(1-\ell_A)(1-\ell_B)>
\alpha\beta\ell_A\ell_B L_H\chi_\theta.
\end{equation}
\end{proposition}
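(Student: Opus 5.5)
The plan is to reduce the spectral condition to the principal-minor test of \Cref{lem:z-m-matrix}. Since $L_\theta\geq0$ entrywise, part (i) of that lemma with $\gamma=1$ gives $\rho(L_\theta)<1$ if and only if $I-L_\theta$ is a nonsingular $M$-matrix. Part (ii) then gives a sufficient test: this holds whenever the $(1,1)$ entry $1-\ell_A$ is positive and $\det(I-L_\theta)>0$. Conversely, if $I-L_\theta$ is a nonsingular $M$-matrix, its diagonal entries are positive and its determinant is positive, as shown in the proof of (ii). So the whole task is to compute $\det(I-L_\theta)$ and show that, together with the diagonal positivity, it is equivalent to the displayed scalar inequality.

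For the determinant, set $a=\ell_A$, $c=\ell_B$, $\kappa=\alpha\beta L_H$, and $\sigma=1+\theta\geq0$. Then
\[
\det(I-L_\theta)=(1-a)\bigl(1-c(1+\kappa\sigma a)\bigr)-\alpha a L_H\,\beta c(\sigma a+|\theta|).
\]
I would expand this, and the cubic terms $\pm\kappa\sigma a^2c$ should cancel, leaving
\[
(1-a)(1-c)-\kappa ac\sigma+\kappa\sigma a^2c-\kappa\sigma a^2c-\kappa ac|\theta|
=(1-a)(1-c)-\kappa ac(1+\theta+|\theta|).
\]
This is exactly $(1-\ell_A)(1-\ell_B)-\alpha\beta\ell_A\ell_B L_H\chi_\theta$, so $\det(I-L_\theta)>0$ is the right-hand side of \eqref{eq:scalar-schur-condition}. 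The piecewise form of $\chi_\theta$ is immediate from $|\theta|=\mp\theta$ on the two ranges.

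It remains to handle the diagonal condition $1-\ell_A>0$, which I expect to be the only delicate point since the lemma's test needs it.

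For the forward direction ($\Rightarrow$), $\rho(L_\theta)<1$ gives $\det(I-L_\theta)>0$ by the remark above.

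For the reverse direction ($\Leftarrow$), assume the scalar inequality. Because $\ell_A,\ell_B\in[0,1]$ and the right-hand side is nonnegative, the left-hand side is strictly positive. Hence $(1-\ell_A)(1-\ell_B)>0$, and both factors are nonnegative, so $\ell_A<1$ and $\ell_B<1$. Thus $1-\ell_A>0$ and $\det(I-L_\theta)>0$, and \Cref{lem:z-m-matrix}(ii) followed by (i) yields $\rho(L_\theta)<1$.

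The main care needed is the sign bookkeeping in the expansion, where $\sigma=1+\theta\geq0$ is used so that the entries match \eqref{eq:master-gain} verbatim, and confirming that the cancellation of the $a^2c$ terms is exact.
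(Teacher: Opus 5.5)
Your proposal is correct and follows essentially the same route as the paper: you compute $\det(I-L_\theta)=(1-\ell_A)(1-\ell_B)-\alpha\beta\ell_A\ell_B L_H\chi_\theta$, extract $\ell_A<1$ from the strict scalar inequality, and invoke both parts of \Cref{lem:z-m-matrix} in each direction. The only cosmetic difference is that the paper checks the second diagonal minor explicitly via $(1-a)(1-d)=\det(I-L_\theta)+b_0c>0$, whereas you rely on the lemma's clause that $a>0$ and $\det C>0$ already suffice, which is the same fact.
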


\begin{proof}
Write $L_\theta=\begin{bmatrix}a&b_0\\c&d\end{bmatrix}\geq0$, where $a=\ell_A$ and $d=\ell_B+\alpha\beta(1+\theta)\ell_A\ell_B L_H$. Then $I-L_\theta=\begin{bmatrix}1-a&-b_0\\-c&1-d\end{bmatrix}$ is a $Z$-matrix. Expanding its determinant and collecting the coupling terms gives
\begin{equation}
\label{eq:master-gain-determinant}
\begin{aligned}
\det(I-L_\theta)
&=(1-\ell_A)\bigl(1-\ell_B-\alpha\beta(1+\theta)\ell_A\ell_B L_H\bigr)\\
&\quad-\alpha\beta\ell_A\ell_B L_H\bigl((1+\theta)\ell_A+|\theta|\bigr)\\
&=(1-\ell_A)(1-\ell_B)-\alpha\beta\ell_A\ell_B L_H\chi_\theta.
\end{aligned}
\end{equation}
Suppose first that the strict inequality in \eqref{eq:scalar-schur-condition} holds. Its right-hand side is nonnegative, so $\ell_A,\ell_B\in[0,1]$ imply $\ell_A<1$ and $\ell_B<1$. Thus $1-a>0$ and \eqref{eq:master-gain-determinant} gives $\det(I-L_\theta)>0$. The second diagonal principal minor is positive as well, because
\[
(1-a)(1-d)=\det(I-L_\theta)+b_0c>0,
\qquad
1-d=\frac{\det(I-L_\theta)+b_0c}{1-a}>0.
\]
All three nonempty principal minors of $I-L_\theta$ are therefore positive. By \Cref{lem:z-m-matrix} (ii), it is a nonsingular $M$-matrix, and part (i) gives $\rho(L_\theta)<1$.

Conversely, if $\rho(L_\theta)<1$, the same lemma makes $I-L_\theta$ a nonsingular $M$-matrix with positive principal minors. In particular, $\det(I-L_\theta)>0$, and \eqref{eq:master-gain-determinant} gives the strict scalar inequality.
\end{proof}

\begin{corollary}[Strong-monotonicity threshold]
\label[corollary]{cor:canonical-threshold}
If $A$ and $B$ are respectively $\mu_A$- and $\mu_B$-strongly monotone and
\begin{equation}
\label{eq:canonical-resolvent-bounds}
\ell_A=(1+\alpha\mu_A)^{-1},
\qquad
\ell_B=(1+\beta\mu_B)^{-1},
\end{equation}
then \eqref{eq:master-schur} is equivalent to
\begin{equation}
\label{eq:canonical-threshold}
\mu_A\mu_B>L_H\chi_\theta.
\end{equation}
\end{corollary}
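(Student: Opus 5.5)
The plan is to substitute the canonical resolvent bounds \eqref{eq:canonical-resolvent-bounds} into the exact scalar Schur criterion of \Cref{prop:schur-criterion}, which already reduces \eqref{eq:master-schur} to
\[
(1-\ell_A)(1-\ell_B)>\alpha\beta\ell_A\ell_B L_H\chi_\theta .
\]
These values of $\ell_A,\ell_B$ are admissible in \Cref{thm:global}: they lie in $(0,1]$ and dominate the Lipschitz constants by \eqref{eq:resolvent-strong-bound}. It therefore suffices to show that the scalar inequality above is equivalent to \eqref{eq:canonical-threshold}.

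For the computation, note that $1-\ell_A=\alpha\mu_A/(1+\alpha\mu_A)=\alpha\mu_A\ell_A$, and likewise $1-\ell_B=\beta\mu_B\ell_B$. The left-hand side then becomes $\alpha\beta\mu_A\mu_B\ell_A\ell_B$, and the inequality reads
\[
\alpha\beta\ell_A\ell_B\,\mu_A\mu_B>\alpha\beta\ell_A\ell_B\,L_H\chi_\theta .
\]
The common factor $\alpha\beta\ell_A\ell_B$ is strictly positive because $\alpha,\beta>0$ and $\ell_A,\ell_B>0$. Dividing by it preserves the strict inequality in both directions, which yields exactly $\mu_A\mu_B>L_H\chi_\theta$.

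There is essentially no obstacle. The only points needing care are that the equivalence in \Cref{prop:schur-criterion} holds for all $\ell_A,\ell_B\in[0,1]$, including the degenerate cases $\mu_A=0$ or $\mu_B=0$ (where $\ell=1$), and that the divisor is nonzero. In the degenerate cases both sides of the claimed equivalence are simultaneously false, since $L_H\chi_\theta\geq0$.
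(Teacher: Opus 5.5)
Your proposal is correct and follows the paper's proof essentially verbatim: substituting $1-\ell_A=\alpha\mu_A\ell_A$ and $1-\ell_B=\beta\mu_B\ell_B$ into the criterion of \Cref{prop:schur-criterion} factors the difference as $\alpha\beta\ell_A\ell_B(\mu_A\mu_B-L_H\chi_\theta)$, and positivity of $\alpha\beta\ell_A\ell_B$ gives the equivalence. Your separate remark on the degenerate cases $\mu_A=0$ or $\mu_B=0$ is harmless but unnecessary, since the factored identity already covers them.
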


\begin{proof}
The bounds in \eqref{eq:canonical-resolvent-bounds} satisfy $1-\ell_A=\alpha\mu_A\ell_A$ and $1-\ell_B=\beta\mu_B\ell_B$. Consequently,
\[
(1-\ell_A)(1-\ell_B)-\alpha\beta\ell_A\ell_B L_H\chi_\theta
=\alpha\beta\ell_A\ell_B\bigl(\mu_A\mu_B-L_H\chi_\theta\bigr).
\]
Since $\alpha\beta\ell_A\ell_B>0$, \Cref{prop:schur-criterion} gives exactly \eqref{eq:canonical-threshold}.
\end{proof}

The Schur threshold and the block-metric condition in \Cref{prop:hidden-metric-monotonicity} describe complementary geometric regimes. For $H=tI$, block-metric monotonicity holds for arbitrary maximal monotone $A$ and $B$, whereas \eqref{eq:canonical-threshold} requires $\mu_A\mu_B>t\chi_\theta$. Conversely, \Cref{prop:rotational-separation} satisfies the gain condition while its lift is nonmonotone in every equivalent Hilbert inner product. Block-metric monotonicity allows convergence to be guaranteed by averagedness of the resolvent, while Schur stability guarantees linear convergence via the individual operator gains.

For the potential model \eqref{eq:weighted-potential}, the threshold \eqref{eq:canonical-threshold} has a direct curvature interpretation.

\begin{corollary}[Strong convexity of the bilinear potential]
\label[corollary]{cor:potential-curvature}
Assume that $g$ and $f$ are $\mu_A$- and $\mu_B$-strongly convex and $H=-\gamma I$. If
\begin{equation}
\label{eq:potential-curvature-threshold}
\mu_A\mu_B>\gamma,
\end{equation}
then $\mathcal P$ in \eqref{eq:weighted-potential} is strongly convex and has a unique minimizer. For the quadratic functions $g(y)=\mu_A\|y\|^2/2$ and $f(x)=\mu_B\|x\|^2/2$, this condition is also necessary for strong convexity of $\mathcal P$. In this case, it also coincides with the Schur condition \eqref{eq:canonical-threshold} for $\theta\in[-1,0]$.

\end{corollary}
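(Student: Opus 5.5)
The plan is to treat $\mathcal P(y,x)=g(y)-\inner{b}{y}+\gamma f(x)-\gamma\inner{y}{x}$ as a strongly convex part plus an indefinite bilinear part, and to check that the bilinear part is dominated by the curvature. Since $g$ is $\mu_A$-strongly convex and $\gamma f$ is $\gamma\mu_B$-strongly convex, I write $g=\tilde g+\tfrac{\mu_A}{2}\norm{\cdot}^2$ and $f=\tilde f+\tfrac{\mu_B}{2}\norm{\cdot}^2$ with $\tilde g,\tilde f\in\Gamma_0(\cX)$. This gives
\[
\mathcal P(y,x)=\tilde g(y)+\gamma\tilde f(x)-\inner{b}{y}+Q(y,x),
\qquad
Q(y,x):=\tfrac{\mu_A}{2}\norm{y}^2-\gamma\inner{y}{x}+\tfrac{\gamma\mu_B}{2}\norm{x}^2 .
\]
The first three terms are convex, so strong convexity of $\mathcal P$ reduces to strong positivity of the quadratic form $Q$. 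Cauchy--Schwarz gives $Q(y,x)\geq\tfrac12 (\norm{y},\norm{x})\,C\,(\norm{y},\norm{x})\T$ with the symmetric $Z$-matrix
\[
C=\begin{bmatrix}\mu_A&-\gamma\\-\gamma&\gamma\mu_B\end{bmatrix},
\qquad
\det C=\gamma(\mu_A\mu_B-\gamma).
\]
Under \eqref{eq:potential-curvature-threshold}, $\mu_A>0$ and $\det C>0$, so \Cref{lem:z-m-matrix}(ii) gives $C\succ0$. Hence $Q(y,x)\geq\tfrac{\lambda_{\min}(C)}{2}(\norm{y}^2+\norm{x}^2)$, and $\mathcal P$ is $\lambda_{\min}(C)$-strongly convex on $\cX^2$.

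Existence and uniqueness of the minimizer follow from the standard fact that a proper lower semicontinuous strongly convex function on a Hilbert space has exactly one minimizer \cite{BauschkeCombettes2017}. Here $\mathcal P$ is proper because $\dom g\times\dom f\neq\varnothing$. It is lower semicontinuous as a sum of lsc functions and a continuous bilinear term. By \Cref{prop:potential-boundary}, this minimizer is characterized by \eqref{eq:coupled-response-relations}.

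For necessity in the quadratic case, take $g=\mu_A\norm{\cdot}^2/2$ and $f=\mu_B\norm{\cdot}^2/2$. Then $\mathcal P$ is a quadratic plus a linear term with Hessian $C\otimes I$, read as the operator $(y,x)\mapsto(\mu_A y-\gamma x,\,-\gamma y+\gamma\mu_B x)$. I test it on directions $(y,x)=(s h,\,u h)$ with $\norm{h}=1$. If $\mu_A\mu_B\leq\gamma$, then $\det C\leq0$ and $C$ is not positive definite. Choosing $(s,u)\neq0$ with $(s,u)C(s,u)\T\leq0$ gives a direction along which $\mathcal P$ has nonpositive curvature, so $\mathcal P$ is not strongly convex. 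Finally, for $\theta\in[-1,0]$ we have $\chi_\theta=1$, and $L_H=\Lip(-\gamma I)=\gamma$. So \eqref{eq:canonical-threshold} reads $\mu_A\mu_B>\gamma$, which is exactly \eqref{eq:potential-curvature-threshold}; this identification uses \Cref{cor:canonical-threshold}.

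I expect no serious obstacle. The only point needing care is the decomposition step: subtracting the quadratic must leave $\tilde g$ and $\tilde f$ in $\Gamma_0$, which is the definition of strong convexity. The Cauchy--Schwarz reduction of $Q$ to the scalar matrix $C$ mirrors the argument in \Cref{prop:hidden-metric-monotonicity}.
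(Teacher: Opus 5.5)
Your proposal is correct and follows the paper's proof essentially step for step. Both use the decomposition $g=g_0+\mu_A\norm{\cdot}^2/2$, $f=f_0+\mu_B\norm{\cdot}^2/2$ and the matrix $C_\gamma$, whose positive definiteness is equivalent to $\mu_A\mu_B>\gamma$, and both obtain necessity from the exact Hessian $C_\gamma\otimes I$ in the isotropic case. Your Cauchy--Schwarz reduction, the explicit bad-direction test for necessity, and the substitution $L_H=\gamma$, $\chi_\theta=1$ for the Schur identification only spell out details the paper leaves implicit.
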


\begin{proof}
Write $g=g_0+\mu_A\|\cdot\|^2/2$ and $f=f_0+\mu_B\|\cdot\|^2/2$, where $g_0$ and $f_0$ are convex. Then $\mathcal P$ is the sum of $g_0(y)+\gamma f_0(x)-\langle b,y\rangle$ and a quadratic form with block coefficient $C_\gamma\otimes I$, where
\[
C_\gamma=\begin{pmatrix}\mu_A&-\gamma\\-\gamma&\gamma\mu_B\end{pmatrix}.
\]
The condition $\mu_A\mu_B>\gamma$ makes $C_\gamma$ positive definite. Hence $\mathcal P$ is proper, lower semicontinuous, and strongly convex, and attains a unique minimum. For the stated isotropic family, $C_\gamma\otimes I$ is the exact Hessian, which proves necessity as well.
\end{proof}

\subsection{Sharpness of the gain certificate}
\label{subsec:phase-attainability}

In this subsection, we characterize the sharpness of the gain certificate of Theorem~\ref{thm:global}, by manipulating the phase of the operator $H$. Phase theory complements the magnitude-based small-gain analysis used above by retaining angular information \cite{ChenWangKhongQiu2024, RyuHannahYin2022}. Here the scalar data entering $L_\theta$ are held fixed while the eigenvalue phase of $H$ varies along a rotation path.

\begin{remark}[Information retained by the comparison matrix]
\label[remark]{rem:global-hypothesis}
The matrix $L_\theta$ depends only on the transfer gains determined by $(\ell_A,\ell_B,L_H,\alpha,\beta,\theta)$. Any phase information in $L_\theta$ therefore describes the interconnection structure and not the individual operators. For $\theta\in[-1,0]$, the feasibility boundary is constant because $\chi_\theta=1$, although the certified factor varies with $\theta$. For $\theta>0$, the region of guaranteed convergence contracts with the extrapolation level.
\end{remark}

\begin{proposition}[Worst case phase]
\label[proposition]{prop:worst-phase-attainability}
Fix $\alpha,\beta>0$, $L_H\geq0$, $\ell_A,\ell_B\in(0,1]$, and $\theta\in[-1,0]$. On $\cX=\R$, set
\begin{equation}
\label{eq:worst-phase-linear-data}
A=\frac{1-\ell_A}{\alpha\ell_A}I,
\quad
B=\frac{1-\ell_B}{\beta\ell_B}I,
\quad
H=-L_HI,
\quad b=0.
\end{equation}
Then $J_{\alpha A}=\ell_AI$, $J_{\beta B}=\ell_BI$, and
\begin{equation}
\label{eq:worst-phase-exact-recursion}
\binom{y^{k+1}}{x^{k+1}}
=L_\theta\binom{y^k}{x^k}.
\end{equation}
Hence $\rho(L_\theta)<1$ is necessary and sufficient for this instance to be strictly contractive in an equivalent norm, or equivalently for its zero fixed point to be globally asymptotically stable. Consequently, the scalar gain condition of Theorem~\ref{thm:global} is sharp for uniform contraction.
\end{proposition}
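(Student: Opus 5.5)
The plan is to compute the scalar iteration explicitly and match it to $L_\theta$ entry by entry. From there, necessity and sufficiency follow from linear-systems facts.

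\emph{Resolvents.} Since $A=aI$ with $a=(1-\ell_A)/(\alpha\ell_A)\geq0$, the resolvent is
\[
J_{\alpha A}=(1+\alpha a)^{-1}I=\bigl(1+(1-\ell_A)/\ell_A\bigr)^{-1}I=\ell_A I,
\]
and similarly $J_{\beta B}=\ell_B I$. Both operators are nonnegative scalar multiples of the identity, so they are maximal monotone. Moreover $\Lip(H)=L_H$, so the hypotheses of \Cref{thm:global} hold with exactly these constants.

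\emph{Exact recursion.} Substituting into \Cref{alg:triangular} with $b=0$ and $H=-L_H I$ gives
\[
y^{k+1}=\ell_A(y^k+\alpha L_H x^k).
\]
For the second row we have $\bar y^k=(1+\theta)y^{k+1}-\theta y^k$. Since $\theta\in[-1,0]$, we can write $-\theta=|\theta|$, so both coefficients $1+\theta$ and $|\theta|$ are nonnegative. Then
\[
x^{k+1}=\ell_B\bigl(x^k+\beta(1+\theta)y^{k+1}+\beta|\theta|y^k\bigr).
\]
Substituting the expression for $y^{k+1}$ yields the coefficients
\[
\beta\ell_B\bigl((1+\theta)\ell_A+|\theta|\bigr)\ \text{on } y^k,
\qquad
\ell_B\bigl(1+\alpha\beta(1+\theta)\ell_A L_H\bigr)\ \text{on } x^k.
\]
These are exactly the entries of \eqref{eq:master-gain}, which proves \eqref{eq:worst-phase-exact-recursion}. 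This sign check is the only real point of care. Choosing $H=-L_H I$ makes the $H$-term enter with a plus sign, and $\theta\leq0$ makes the $y^k$-term enter with a plus sign. Together they turn every triangle inequality in the proof of \Cref{thm:global} into an equality, so there is no phase cancellation. For $\theta>0$ the $y^k$ coefficient would be $-\theta$, not $|\theta|$, which is why the statement restricts to $\theta\in[-1,0]$.

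\emph{Equivalences.} The map is linear, $T=L_\theta$ on $\R^2$.
\begin{enumerate}
\item If $\rho(L_\theta)<1$, \Cref{thm:global}(i) gives strict contraction in the weighted max norm, and hence global asymptotic stability of $0$.
\item Conversely, suppose $T$ is a strict contraction with factor $q<1$ in some norm on $\R^2$. Then the induced operator norm satisfies $\|L_\theta^k\|\leq q^k$, so $\rho(L_\theta)\leq q<1$.
\item Suppose instead only that $0$ is globally asymptotically stable. Then $L_\theta^k z\to0$ for every $z$, which forces $\rho(L_\theta)<1$. To see this, take a Perron eigenvector $v\geq0$, $v\neq0$, with $L_\theta v=\rho(L_\theta)v$; it exists because $L_\theta\geq0$. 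Its iterates $\rho^k v$ do not vanish if $\rho\geq1$.
\end{enumerate}
Hence the three conditions coincide. Since $L_\theta$ is attained exactly by an admissible instance with the prescribed constants $(\ell_A,\ell_B,L_H,\alpha,\beta,\theta)$, no weaker condition on these gains can guarantee uniform contraction. This shows that the scalar gain condition of \Cref{thm:global} is sharp, and by \Cref{prop:schur-criterion} the same holds for its scalar form.
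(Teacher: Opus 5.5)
Your proposal is correct and follows essentially the same route as the paper. You compute the scalar recursion and match its entries to $L_\theta$ using $-\theta=|\theta|$, then obtain sufficiency from the Perron-weighted estimate of Theorem~\ref{thm:global} and necessity from a nonnegative Perron eigenvector; your extra steps (verifying $J_{\alpha A}=\ell_A I$ explicitly and bounding $\rho(L_\theta)$ by the contraction factor via operator norms) only make explicit what the paper leaves implicit.
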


\begin{proof}
The first row is $y^{k+1}=\ell_A(y^k+\alpha L_Hx^k)$. Substitution into the second gives the lower-left coefficient $\beta\ell_B((1+\theta)\ell_A-\theta)$, which equals the corresponding entry of $L_\theta$ because $-\theta=|\theta|$.
If $\rho(L_\theta)<1$, the Perron-weighted contraction estimate applies. Conversely, let $p\ne0$ be a nonnegative Perron vector satisfying $L_\theta p=\rho(L_\theta)p$. If $\rho(L_\theta)\ge1$, the orbit from $p$ does not converge to zero, and $\|L_\theta p\|\ge\|p\|$ in every norm. This proves necessity and the asserted uniform sharpness.
\end{proof}

\begin{remark}
\label[remark]{rem:positive-theta-phase-gap}
The analysis above holds only for $\theta \in [-1, 0]$, and whether or not Theorem~\ref{thm:global} is conservative for $\theta \in (0, 1]$ in general remains an open question.  The following construction gives a class of problems where Theorem~\ref{thm:global} is provably conservative.

To vary the coupling phase while retaining the same scalar gains, fix $\alpha,\beta,L_H>0$, $\ell_A,\ell_B\in(0,1]$, and $\theta\in[-1,1]$, and set
\begin{equation}
\label{eq:phase-rotation-family}
R_\varphi=\begin{bmatrix}\cos\varphi&-\sin\varphi\\\sin\varphi&\cos\varphi\end{bmatrix},
\qquad
M_\varphi=I_m\otimes R_\varphi,
\qquad
\varphi\in[0,\pi].
\end{equation}
Take $A=(1-\ell_A)(\alpha\ell_A)^{-1}I$, $B=(1-\ell_B)(\beta\ell_B)^{-1}I$, $H=L_HM_\varphi$, and $b=0$. The two complex rotation modes $e^{\pm\mathrm i\varphi}$ give conjugate iteration matrices with the same spectral radius. On the $e^{\mathrm i\varphi}$-mode, substitution of the first update into the second gives
\begin{equation}
\label{eq:positive-theta-exact-modal-matrix}
T_\theta(\varphi)=\begin{bmatrix}
\ell_A&-\alpha\ell_A L_He^{\mathrm i\varphi}\\
\beta\ell_B((1+\theta)\ell_A-\theta)&\ell_B(1-\alpha\beta(1+\theta)\ell_A L_He^{\mathrm i\varphi})
\end{bmatrix}.
\end{equation}

For this rotation family, every $\theta\in(0,1]$ produces cancellation in the lower-left transfer, since
\begin{equation}
\label{eq:positive-theta-lower-left-gap}
(1+\theta)\ell_A+\theta-|(1+\theta)\ell_A-\theta|
=2\min\{(1+\theta)\ell_A,\theta\}>0.
\end{equation}
Let $\widehat L_\theta$ agree with $L_\theta$ except for
\begin{equation}
\label{eq:positive-theta-intermediate-gain}
(\widehat L_\theta)_{21}:=\beta\ell_B|(1+\theta)\ell_A-\theta|<(L_\theta)_{21}.
\end{equation}
Then $|T_\theta(\varphi)|\leq\widehat L_\theta\leq L_\theta$, with a strict inequality in that entry. Irreducibility of $L_\theta$ and Perron--Frobenius monotonicity \cite[Chapter~2]{BermanPlemmons1994} give
\begin{equation}
\label{eq:positive-theta-strict-gap}
\rho(T_\theta(\varphi))\leq\rho(\widehat L_\theta)<\rho(L_\theta)
\qquad(0\leq\varphi\leq\pi).
\end{equation}
Thus no phase in this family attains the canonical gain when $\theta>0$, whereas $\theta\leq0$ attains it at $\varphi=\pi$. The following numerical scan illustrates both regimes.
\end{remark}

To calibrate both regimes on one fixed-gain path, specialize the rotation family in \eqref{eq:phase-rotation-family} to $m=50$. Then $M_0=I$, $M_{\pi/2}$ is skew-adjoint, $M_\pi=-I$, and $\norm{M_\varphi}=1$, while the comparison matrix \eqref{eq:master-gain} is independent of $\varphi$. Take
\begin{equation}
\label{eq:phase-sharpness-model}
g(y)=\frac{\mu_A}{2}\norm{y}^2,
\qquad
f(x)=\frac{\mu_B}{2}\norm{x}^2,
\qquad
H=M_\varphi,
\qquad
b=0,
\end{equation}
with
\begin{equation}
\label{eq:phase-sharpness-parameters}
\mu_A=\mu_B=2,
\qquad
\alpha=\beta=L_H=1,
\end{equation}
so $\ell_A=\ell_B=1/3$. On the $e^{\mathrm i\varphi}$-mode,
\begin{equation}
\label{eq:phase-exact-recursion}
\binom{\widehat e_y^{k+1}}{\widehat e_x^{k+1}}
=T_\theta(\varphi)
\binom{\widehat e_y^k}{\widehat e_x^k},
\end{equation}
where $T_\theta(\varphi)$ is the modal iteration matrix in \eqref{eq:positive-theta-exact-modal-matrix}. Define
\begin{equation}
\label{eq:phase-sharpness-ratio}
r_\theta(\varphi):=\rho(T_\theta(\varphi)),
\qquad
\kappa_\theta(\varphi):=\rho(L_\theta)/r_\theta(\varphi).
\end{equation}
The canonical roots are
\begin{equation}
\label{eq:phase-gain-roots}
\rho(L_{-1})\approx0.667,
\quad
\rho(L_0)\approx0.589,
\quad
\rho(L_{1/2})\approx0.760,
\quad
\rho(L_1)\approx0.889.
\end{equation}

For $\theta=-1$ and $0$, the ratio decreases to one at $\varphi=\pi$, where the exact matrix equals the canonical gain, in agreement with \Cref{prop:worst-phase-attainability}. Positive extrapolation remains strictly below the canonical root for every phase, as predicted by  the construction in \Cref{rem:positive-theta-phase-gap}. Numerically, $\kappa_{1/2}$ decreases from $2.281$ to $1.521$ and $\kappa_1$ from $2.000$ to $1.886$; at $\theta=1/2$, $(1+\theta)\ell_A-\theta=0$ removes the inherited-state transfer.

\begin{figure}[t]
\centering
\includegraphics[width=0.80\textwidth]{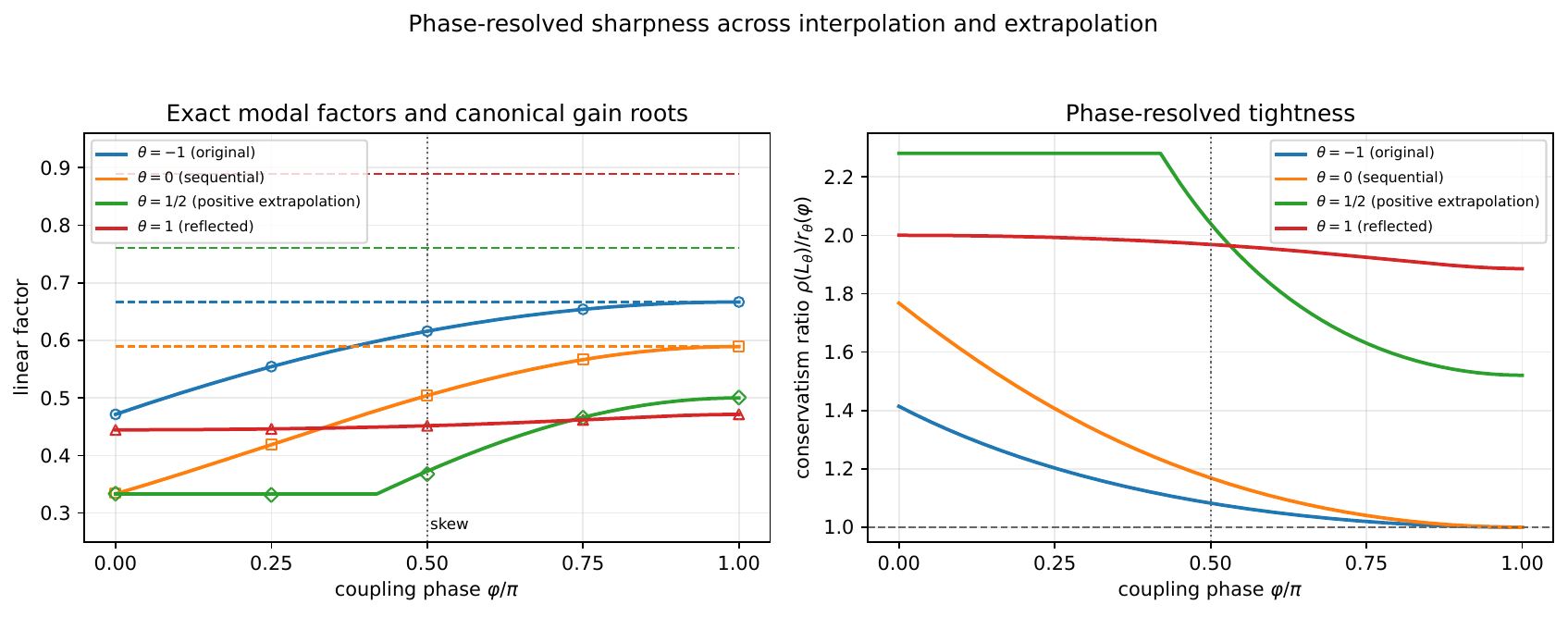}
\caption{Exact modal factors and certificate-to-exact ratios along the fixed-gain phase path. Nonpositive $\theta$ attains the canonical gain at $M_\pi=-I$, while positive extrapolation retains the strict gap predicted by \Cref{rem:positive-theta-phase-gap}.}
\label{fig:phase-resolved-sharpness}
\end{figure}

Together, the analytical families and numerical scans of this section establish a reliable uniform certificate, exact worst-phase attainment in the nonpositive regime, and measurable phase-dependent conservatism away from that regime.

\subsection{Nonsmooth certificate and rate calibration}
\label{subsec:certificate-calibration}

The preceding analysis gives a calibration of the convergence conditions of Theorem~\ref{thm:global} for linear instances. We next test the same magnitude-only certificate on nonsmooth proximal maps. The two couplings below have the same operator norm and therefore share the same convergence boundary as the linear problems, while their observed convergence regions and tail factors reveal the influence of signed and spectral structure.

Let $n\ge4$ be even, and let $S$ be the cyclic right shift on $\mathbb R^n$. Define
\begin{equation}
\label{eq:certificate-couplings}
M_{\rm sym}:=\frac{2I-S-S^*}{4},
\qquad
M_{\rm ns}:=S,
\qquad
\norm{M_{\rm sym}}=\norm{M_{\rm ns}}=1.
\end{equation}
The two couplings therefore generate the same canonical magnitude-based certificate, while retaining different structure: $M_{\rm sym}$ is self-adjoint and $M_{\rm ns}$ is a nonsymmetric cyclic shift.

With $H=M$, $b=0$, $\alpha=\beta=1$, and $\theta=0$, consider
\begin{equation}
\label{eq:certificate-functions}
g_{\mu_A}(y)=\frac{\mu_A}{2}\norm{y-c}^2+0.12\norm{y}_1,
\qquad
f_{\mu_B}(x)=\frac{\mu_B}{2}\norm{x-d}^2+0.10\norm{x}_1.
\end{equation}
Choose $(y^\star,x^\star)$ with nonzero coordinates and define
\begin{equation}
\label{eq:certificate-exact-centers}
c=y^\star+\frac{Mx^\star+0.12\operatorname{sgn}(y^\star)}{\mu_A},
\qquad
d=x^\star-\frac{y^\star-0.10\operatorname{sgn}(x^\star)}{\mu_B}.
\end{equation}
Then $-Mx^\star\in\partial g_{\mu_A}(y^\star)$ and $y^\star\in\partial f_{\mu_B}(x^\star)$, so the reference pair is an exact lifted solution. Since $L_H=\chi_0=1$, the common convergence boundary is
\begin{equation}
\label{eq:certificate-boundary}
\mu_A\mu_B=1.
\end{equation}

Figure~\ref{fig:certificate-phase-calibration} compares this boundary with observed convergence and linear rates. Every tested certified instance converges for both couplings. The symmetric coupling also converges throughout the tested exterior, whereas the cyclic shift follows the canonical boundary more closely. Within the certified region, the median observed-to-certified diagonal-Hilbert factors are $0.617$ for $M_{\rm sym}$ and $0.943$ for $M_{\rm ns}$.

\begin{figure}[t]
\centering
\includegraphics[width=0.80\textwidth]{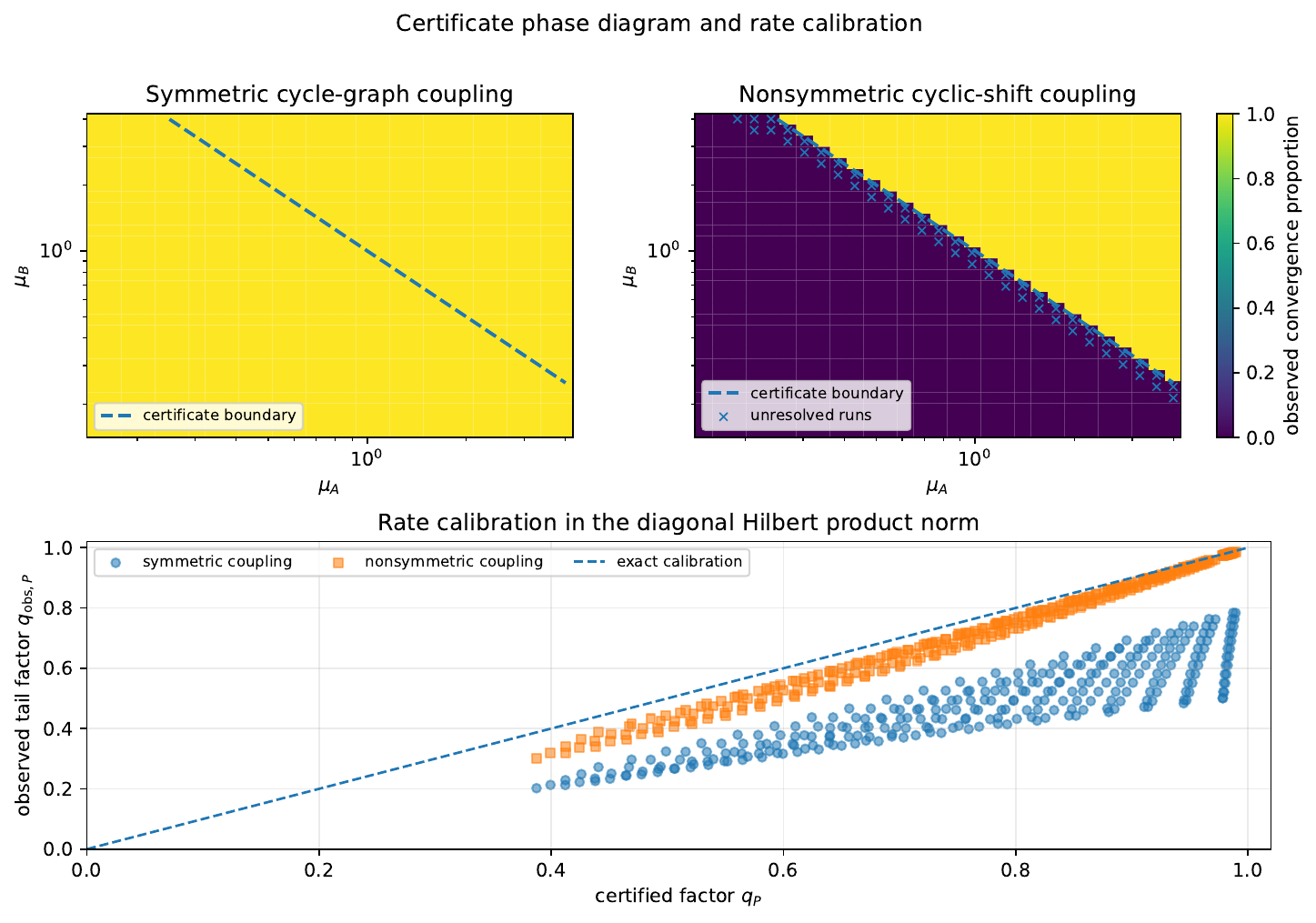}
\caption{Certificate boundary and rate calibration for two couplings with the same operator norm. The dashed curve is the common boundary \eqref{eq:certificate-boundary}; the upper panels compare it with observed convergence, while the lower panel compares certified and observed tail factors.}
\label{fig:certificate-phase-calibration}
\end{figure}

The comparison isolates the information lost when $H$ is represented only by $L_H$. Self-adjoint structure creates substantial signed and spectral cancellation, while the cyclic shift more closely approaches the absolute block-gain model.

\section{Convergence of pure compositions under weaker assumptions}
\label{sec:pure-composition}

In contrast to the coupled setting of~\Cref{sec:gain-analysis}, setting $H=b=0$ reduces the master problem to $0\in A(Bx)$ and the kernel family to \eqref{eq:pure-family}. The first row then becomes autonomous, yielding a lower-triangular dependence that sharpens the convergence analysis: joint resolvent contractivity gives a direct triangular contraction, while asymptotic convergence can be established under strong monotonicity of only one operator through a tracking argument.

\subsection{Joint resolvent contractivity}
\label{subsec:pure-two-sided}

\begin{theorem}[Global linear convergence under two-operator contractivity]
\label{thm:pure-linear}
Assume
\begin{equation}
\label{eq:pure-lipschitz}
\Lip(J_{\alpha A})\leq\ell_A<1,
\qquad
\Lip(J_{\beta B})\leq\ell_B<1.
\end{equation}
For $\theta\in[-1,1]$, set
\begin{equation}
\label{eq:pure-gain-theta}
L_{AB,\theta}:=
\begin{bmatrix}\ell_A&0\\c_\theta&\ell_B\end{bmatrix},
\qquad
c_\theta:=\beta\ell_B((1+\theta)\ell_A+|\theta|).
\end{equation}
Then
\begin{equation}
\label{eq:pure-spectral-radius}
\rho(L_{AB,\theta})=\max\{\ell_A,\ell_B\}<1.
\end{equation}
Consequently, \eqref{eq:pure-family} has a unique fixed point and converges globally and linearly in the weighted norms of \Cref{thm:global}.
\end{theorem}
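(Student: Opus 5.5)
The plan is to read the result off as the special case $H=0$, $b=0$ of \Cref{thm:global}. With $L_H=0$, the gain matrix $L_\theta$ in \eqref{eq:master-gain} loses its upper-right entry $\alpha\ell_A L_H$. It also loses the coupling term $\alpha\beta(1+\theta)\ell_A L_H$ in its lower-right entry. What remains is exactly $L_{AB,\theta}$ in \eqref{eq:pure-gain-theta}. I will still re-derive the componentwise estimate directly, since it is immediate and removes any dependence on the Lipschitz constant of $H$. The first row \eqref{eq:pure-family-y} gives $\norm{\Delta y^+}\leq\ell_A\norm{\Delta y}$. For the second row \eqref{eq:pure-family-x}, nonexpansiveness-type bounds on $J_{\beta B}$ together with $1+\theta\geq0$ give
\[
\norm{\Delta x^+}\leq\ell_B\norm{\Delta x}+\beta\ell_B\bigl((1+\theta)\norm{\Delta y^+}+|\theta|\norm{\Delta y}\bigr).
\]
Substituting the first bound into this one yields $e(T z-T\widetilde z)\leq L_{AB,\theta}\,e(z-\widetilde z)$, with $T$ the map of \eqref{eq:pure-family}.

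Next I compute the spectral radius. The matrix $L_{AB,\theta}$ is lower triangular, so its eigenvalues are its diagonal entries $\ell_A$ and $\ell_B$. This gives $\rho(L_{AB,\theta})=\max\{\ell_A,\ell_B\}$, and the hypothesis \eqref{eq:pure-lipschitz} makes this strictly less than $1$. As a consistency check, the scalar criterion of \Cref{prop:schur-criterion} with $L_H=0$ reduces to $(1-\ell_A)(1-\ell_B)>0$, which agrees.

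With $\rho(L_{AB,\theta})<1$ in hand, the remaining argument of \Cref{thm:global} uses only the componentwise inequality and \Cref{lem:z-m-matrix}. I will therefore invoke it verbatim to obtain three things. The first is the Perron weight $w_\gamma=(\gamma I-L_{AB,\theta})^{-1}\one>0$ and contraction in $\norm{\cdot}_{w_\gamma,\infty}$. The second is the diagonal Hilbert weight $P$ built from $r$ and $s$, with factor $q<1$. The third is Banach--Picard, which gives a unique fixed point and the linear rate \eqref{eq:linear-convergence}.

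By \Cref{prop:pure-fixed}, that fixed point lies in $\mathcal S_{A,B}$, and its $x$-component solves \eqref{eq:pure-composition}. One subtlety is that $L_{AB,\theta}$ is reducible when $c_\theta=0$. This does not matter, because \Cref{lem:z-m-matrix}(i) needs only $L\geq0$ and $\rho(L)<1$. There is no real obstacle beyond confirming that none of the steps in \Cref{thm:global} used $L_H>0$ or irreducibility.
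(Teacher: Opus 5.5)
Your proposal is correct and follows the paper's proof: derive the row-wise estimates $\norm{\Delta y^+}\leq\ell_A\norm{\Delta y}$ and $\norm{\Delta x^+}\leq\ell_B\norm{\Delta x}+c_\theta\norm{\Delta y}$, read off $\rho(L_{AB,\theta})=\max\{\ell_A,\ell_B\}$ from the lower-triangular structure, and then apply \Cref{thm:global} with $L_H=0$. Your added remarks, that reducibility when $c_\theta=0$ is harmless and that the scalar Schur criterion is consistent, are valid, though the paper does not need them.
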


\begin{proof}
For two trajectories,
\[
\norm{\Delta y^+}\leq\ell_A\norm{\Delta y},
\qquad
\norm{\Delta x^+}\leq\ell_B\norm{\Delta x}+c_\theta\norm{\Delta y}.
\]
This gives \eqref{eq:pure-gain-theta}. Since the matrix is lower triangular, its spectral radius is the larger diagonal entry; \Cref{thm:global} then applies with $L_H=0$.
\end{proof}

Strong monotonicity of both factors is a standard sufficient condition for \eqref{eq:pure-lipschitz}, with $\ell_A=(1+\alpha\mu_A)^{-1}$ and $\ell_B=(1+\beta\mu_B)^{-1}$. The three distinguished algorithms have the same diagonal spectral condition, and their comparison matrices $L_{AB,\theta}$ differ only in the off-diagonal coefficient $c_\theta$.

\begin{proposition}[Sequential transmission minimizes the certified downstream gain]
\label{prop:pure-theta-zero}
The coefficient $c_\theta$ is minimized on $[-1,1]$ at $\theta=0$, and
\begin{equation}
\label{eq:pure-endpoint-gains}
c_{-1}=\beta\ell_B,
\qquad
c_0=\beta\ell_A\ell_B,
\qquad
c_1=\beta\ell_B(2\ell_A+1).
\end{equation}
\end{proposition}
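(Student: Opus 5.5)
The plan is a direct piecewise analysis of the scalar function $\theta\mapsto c_\theta=\beta\ell_B\,\phi(\theta)$ with $\phi(\theta):=(1+\theta)\ell_A+|\theta|$ on $[-1,1]$. Since $\beta\ell_B\geq0$ is independent of $\theta$, it suffices to minimize $\phi$; if $\ell_B=0$ every $c_\theta$ vanishes and the claim is trivial, so assume $\beta\ell_B>0$.

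We split at $\theta=0$, where the absolute value changes branch. For $\theta\in[0,1]$ we have $\phi(\theta)=\ell_A+\theta(1+\ell_A)$. This is affine with slope $1+\ell_A\geq1>0$, so it is minimized at $\theta=0$. For $\theta\in[-1,0]$ we have $\phi(\theta)=\ell_A+\theta(\ell_A-1)$. Its slope is $\ell_A-1$, and $\ell_A-1<0$ by the standing assumption $\ell_A<1$ of \Cref{thm:pure-linear}. Hence $\phi$ is decreasing on $[-1,0]$ and again minimized at the right endpoint $\theta=0$. Since $\phi$ is continuous, the two branches together show that $\theta=0$ is the global minimizer on $[-1,1]$, with $\phi(0)=\ell_A$. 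Because both slopes are nonzero, the minimizer is unique.

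The endpoint values then come from direct substitution: $\phi(-1)=1$, $\phi(0)=\ell_A$ and $\phi(1)=2\ell_A+1$. Multiplying by $\beta\ell_B$ gives \eqref{eq:pure-endpoint-gains}.

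The only delicate point is the sign of the slope on $[-1,0]$. If $\ell_A=1$ were allowed, $\phi$ would be constant there and $\theta=0$ would be a minimizer only non-uniquely. We therefore invoke $\ell_A<1$ from \eqref{eq:pure-lipschitz} explicitly. Alternatively, we can note that the minimality claim survives with equality when $\ell_A=1$, since $\phi\equiv1$ on $[-1,0]$ and $\theta=0$ still attains the minimum.
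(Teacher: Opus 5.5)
Your proposal is correct and follows the paper's own argument: the paper writes $c_\theta$ piecewise as $\beta\ell_B(\ell_A+t(1-\ell_A))$ for $\theta=-t\in[-1,0]$ and as $\beta\ell_B(\ell_A+\theta(1+\ell_A))$ for $\theta\in[0,1]$, then observes that both branches are nondecreasing away from zero. Your additional remarks on strict uniqueness when $\ell_A<1$, and on the degenerate case $\ell_A=1$, are consistent with this, since the paper's ``nondecreasing'' phrasing already covers $\ell_A\le 1$ without needing $\ell_A<1$.
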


\begin{proof}
For $\theta=-t\in[-1,0]$, $c_{-t}=\beta\ell_B(\ell_A+t(1-\ell_A))$; for $\theta\in[0,1]$, $c_\theta=\beta\ell_B(\ell_A+\theta(1+\ell_A))$. Both expressions are nondecreasing away from zero.
\end{proof}

The proposition concerns the componentwise transfer certificate. Additional signed structure can further reduce the actual transient coupling.

\subsection{One-sided convergence}
\label{subsec:pure-one-sided}

\begin{theorem}[Triangular convergence under single operator strong monotonicity]
\label{thm:pure-one-sided}
Let $A,B$ be maximal monotone, let $\theta\in[-1,1]$, and assume $\mathcal S_{A,B}\neq\varnothing$.
\begin{enumerate}
\item If $A$ is $\mu_A$-strongly monotone, then $\zer A=\{y^\star\}$, $y^k\to y^\star$ linearly, and $x^k\rightharpoonup x^\star\in B^{-1}y^\star$. In finite dimensions, the convergence of $x^k$ is strong.
\item If $B$ is $\mu_B$-strongly monotone, then $y^k\rightharpoonup y^\star\in\zer A$. Whenever $y^k\to y^\star$ strongly, $B^{-1}y^\star=\{x^\star\}$ and $x^k\to x^\star$ strongly. This condition holds automatically in finite dimensions.
\end{enumerate}
\end{theorem}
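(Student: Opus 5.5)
The plan exploits the lower-triangular structure of \eqref{eq:pure-family}. The first row $y^{k+1}=J_{\alpha A}y^k$ is autonomous: it is the proximal point algorithm for $A$. The second row is a proximal point step for $B$, driven by the signal $\bar y^k=(1+\theta)y^{k+1}-\theta y^k$. The idea is to treat the second row as a perturbation of a fixed proximal map whose fixed points are $B^{-1}y^\star$. The key identity is
\[
J_{\beta B}(x+\beta y^\star)=J_{\beta(B-y^\star)}(x),
\]
so that
\[
x^{k+1}=J_{\beta(B-y^\star)}\bigl(x^k+\beta e^k\bigr),
\qquad
e^k:=\bar y^k-y^\star .
\]
Because $|1+\theta|+|\theta|\le 3$ on $[-1,1]$, we have $\norm{e^k}\le 3\max\{\norm{y^{k+1}-y^\star},\norm{y^k-y^\star}\}$. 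Hence any convergence of $y^k$ transfers to $e^k$.

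\emph{Part 1 ($A$ strongly monotone).} Since $\mathcal S_{A,B}\ne\varnothing$, $\zer A\ne\varnothing$. Strong monotonicity forces $\zer A=\{y^\star\}$, and $y^\star$ is the first component of every element of $\mathcal S_{A,B}$, so $B^{-1}y^\star\neq\varnothing$. By \eqref{eq:resolvent-strong-bound}, $\norm{y^k-y^\star}\le(1+\alpha\mu_A)^{-k}\norm{y^0-y^\star}$, which gives linear convergence. Consequently $\sum_k\norm{e^k}<\infty$.

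The $x$-row is then an inexact proximal point method for the maximal monotone operator $B-y^\star$, with summable errors, and I would prove its convergence directly. Fix any $\hat x\in B^{-1}y^\star=\Fix J_{\beta(B-y^\star)}$. Nonexpansiveness gives
\[
\norm{x^{k+1}-\hat x}\le\norm{x^k-\hat x}+\beta\norm{e^k}.
\]
So $(x^k)$ is quasi-Fejér with respect to $B^{-1}y^\star$: it is bounded, and $\norm{x^k-\hat x}$ converges for every such $\hat x$. Firm nonexpansiveness of $J:=J_{\beta(B-y^\star)}$ yields
\[
\norm{x^{k+1}-\hat x}^2+\norm{(u^k-x^{k+1})}^2\le\norm{u^k-\hat x}^2,
\qquad
u^k:=x^k+\beta e^k .
\]
Combining this with boundedness and summability, $\norm{u^k-Ju^k}\to0$. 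Demiclosedness of $I-J$ at $0$ then puts every weak cluster point of $(x^k)$ in $B^{-1}y^\star$, and Opial's lemma gives $x^k\rightharpoonup x^\star\in B^{-1}y^\star$. In finite dimensions weak and strong convergence coincide. I expect this perturbed-PPA bookkeeping (the quasi-Fejér inequality plus the vanishing residual) to be the main technical step. Alternatively, it can be cited from the inexact proximal point theorem of Rockafellar/Eckstein--Bertsekas with $\sum\norm{e^k}<\infty$.

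\emph{Part 2 ($B$ strongly monotone).} The $y$-row is the proximal point algorithm on $A$ with $\zer A\ne\varnothing$, so by Rockafellar's theorem $y^k\rightharpoonup y^\star\in\zer A$. Since $B$ is maximal and $\mu_B$-strongly monotone, it is surjective and $B^{-1}$ is single-valued and $\mu_B^{-1}$-Lipschitz. Hence $B^{-1}y^\star=\{x^\star\}$ for whatever limit $y^\star$ is reached, and $x^\star=J_{\beta B}(x^\star+\beta y^\star)$.

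Now assume $y^k\to y^\star$ strongly, so $\norm{e^k}\to0$. With $\ell_B=(1+\beta\mu_B)^{-1}<1$, the second row gives
\[
\norm{x^{k+1}-x^\star}\le\ell_B\norm{x^k-x^\star}+\beta\ell_B\norm{e^k}.
\]
A scalar recursion $a_{k+1}\le\ell_B a_k+\varepsilon_k$ with $\ell_B<1$ and $\varepsilon_k\to0$ satisfies $\limsup a_k\le\limsup\varepsilon_k/(1-\ell_B)=0$. Therefore $x^k\to x^\star$ strongly. In finite dimensions the weak convergence of $y^k$ is strong, so the hypothesis holds automatically.
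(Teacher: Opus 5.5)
Your proposal is correct and follows the paper's proof essentially step for step. For part 1 this means linear convergence of the autonomous $y$-row, a summable transmitted error, quasi-Fej\'er monotonicity plus firm nonexpansiveness of $J_{\beta(B-y^\star)}$, then demiclosedness and Opial's lemma. For part 2 it means Rockafellar's weak convergence of the proximal point iterates plus the $(1+\beta\mu_B)^{-1}$-contractive tracking recursion.

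The only difference is cosmetic. You place the perturbation inside the resolvent argument, so your vanishing residual is $u^k-Ju^k$ rather than the paper's $x^k-Qx^k$ with $Q=J_{\beta(B-y^\star)}$. Passing from this to weak cluster points of $(x^k)$ is immediate from $\|u^k-x^k\|=\beta\|e^k\|\to0$, but you should state that transfer explicitly before invoking demiclosedness.
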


\begin{proof}
Suppose first that $A$ is strongly monotone. Since $\mathcal S_{A,B}\neq\varnothing$, the set $\zer A$ is nonempty, and strong monotonicity makes its element $y^\star$ unique. Set $q_A=(1+\alpha\mu_A)^{-1}\in(0,1)$. The resolvent $J_{\alpha A}$ is $q_A$-Lipschitz, so the first row gives
\begin{equation}
\label{eq:pure-y-linear}
\norm{y^k-y^\star}\leq q_A^k\norm{y^0-y^\star}.
\end{equation}
Using $\bar y^k=(1+\theta)y^{k+1}-\theta y^k$ and \eqref{eq:pure-y-linear}, we obtain
\begin{equation}
\label{eq:pure-transmitted-error}
\begin{aligned}
\norm{\bar y^k-y^\star}
&\leq(1+\theta)\norm{y^{k+1}-y^\star}+|\theta|\norm{y^k-y^\star}\\
&\leq\bigl((1+\theta)q_A+|\theta|\bigr)q_A^k\norm{y^0-y^\star}.
\end{aligned}
\end{equation}
Thus $\sum_{k=0}^{\infty}\norm{\bar y^k-y^\star}<\infty$. Define $Qx:=J_{\beta B}(x+\beta y^\star)=J_{\beta(B-y^\star)}x$ and $C:=\Fix Q=B^{-1}y^\star$. The map $Q$ is firmly nonexpansive, and $\mathcal S_{A,B}\neq\varnothing$ implies $C\neq\varnothing$. Write the second row as $x^{k+1}=Qx^k+e^k$, where $e^k:=x^{k+1}-Qx^k$, and set $\eta_k:=\norm{e^k}$. Nonexpansiveness of $J_{\beta B}$ gives $\eta_k\leq\beta\norm{\bar y^k-y^\star}$, so \eqref{eq:pure-transmitted-error} implies $\sum_{k=0}^{\infty}\eta_k<\infty$.

For every $p\in C$, nonexpansiveness of $Q$ yields $\norm{x^{k+1}-p}\leq\norm{x^k-p}+\eta_k$. Consequently, the sequence $\norm{x^k-p}+\sum_{j=k}^{\infty}\eta_j$ is nonincreasing and nonnegative. Since its tail term tends to zero, $\norm{x^k-p}$ converges for every $p\in C$, and $(x^k)$ is bounded. Set $M_p:=\sup_{k\geq0}\norm{x^k-p}<\infty$. Firm nonexpansiveness of $Q$ gives $\norm{Qx^k-p}^2+\norm{x^k-Qx^k}^2\leq\norm{x^k-p}^2$. Combining this inequality with $x^{k+1}=Qx^k+e^k$ and $\norm{Qx^k-p}\leq M_p$, we obtain
\begin{equation}
\label{eq:pure-quasifejer-residual}
\begin{aligned}
\norm{x^{k+1}-p}^2
&\leq\norm{Qx^k-p}^2+2\norm{Qx^k-p}\eta_k+\eta_k^2\\
&\leq\norm{x^k-p}^2-\norm{x^k-Qx^k}^2+2M_p\eta_k+\eta_k^2.
\end{aligned}
\end{equation}
Since $\sum_{k=0}^{\infty}\eta_k<\infty$, the error terms $2M_p\eta_k+\eta_k^2$ are summable. Hence \eqref{eq:pure-quasifejer-residual} establishes quasi-Fej\'er monotonicity with respect to $C$. Summing \eqref{eq:pure-quasifejer-residual} and using nonnegativity of the squared distances yields $\sum_{k=0}^{\infty}\norm{x^k-Qx^k}^2<\infty$, and therefore $x^k-Qx^k\to0$.

Boundedness of $(x^k)$ ensures the existence of weakly convergent subsequences. If $x^{k_j}\rightharpoonup\widehat x$, then $x^{k_j}-Qx^{k_j}\to0$, and the demiclosedness principle gives $\widehat x\in\Fix Q=C$ \cite[Corollary~4.28]{BauschkeCombettes2017}. Thus every weak sequential cluster point belongs to $C$. Together with convergence of $\norm{x^k-p}$ for every $p\in C$, this yields $x^k\rightharpoonup x^\star\in B^{-1}y^\star$ \cite[Lemma~2.47]{BauschkeCombettes2017}. In finite dimensions, this convergence is strong.

Now suppose that $B$ is strongly monotone. Since $\zer A\neq\varnothing$, the proximal-point sequence $y^{k+1}=J_{\alpha A}y^k$ converges weakly to some $y^\star\in\zer A$. Maximality and strong monotonicity of $B$ make $B^{-1}$ single-valued and everywhere defined; set $x^\star:=B^{-1}y^\star$. Then $x^\star=J_{\beta B}(x^\star+\beta y^\star)$. With $q_B=(1+\beta\mu_B)^{-1}\in(0,1)$, the $q_B$-Lipschitz continuity of $J_{\beta B}$ gives
\begin{equation}
\label{eq:pure-x-tracking}
\begin{aligned}
\norm{x^{k+1}-x^\star}
&\leq q_B\norm{x^k-x^\star+\beta(\bar y^k-y^\star)}\\
&\leq q_B\norm{x^k-x^\star}+\beta q_B\norm{\bar y^k-y^\star}.
\end{aligned}
\end{equation}
If $y^k\to y^\star$ strongly, then $\bar y^k-y^\star=(1+\theta)(y^{k+1}-y^\star)-\theta(y^k-y^\star)\to0$. Applying \Cref{lem:vanishing-input-contraction} below to \eqref{eq:pure-x-tracking} yields $x^k\to x^\star$ strongly. In finite dimensions, the weak convergence of $y^k$ is strong, so this condition holds automatically.
\end{proof}

\begin{lemma}
\label[lemma]{lem:vanishing-input-contraction}
If nonnegative sequences $\{e_k\}$ and $\{\delta_k\}$ satisfy
$e_{k+1}\leq\kappa e_k+c\delta_k$ with $\kappa\in[0,1)$, $c\geq0$, and $\delta_k\to0$, then $e_k\to0$.
\end{lemma}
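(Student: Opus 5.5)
The plan is the standard limsup argument for a contraction driven by a vanishing input. Boundedness of the sequence comes first, because the limsup manipulation needs the quantities to be finite. Since $\delta_k\to0$, the sequence $\{\delta_k\}$ is bounded, say $\delta_k\leq D$ for all $k$. Iterating $e_{k+1}\leq\kappa e_k+cD$ by induction gives
\[
e_k\leq\kappa^k e_0+\frac{cD}{1-\kappa},
\]
so $\{e_k\}$ is bounded and $\lambda:=\limsup_{k\to\infty}e_k$ is a finite nonnegative number.

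Next I will pass to the limsup in the recursion. Using subadditivity of $\limsup$ together with $\limsup_k\delta_k=0$,
\[
\lambda=\limsup_{k\to\infty}e_{k+1}\leq\kappa\limsup_{k\to\infty}e_k+c\limsup_{k\to\infty}\delta_k=\kappa\lambda.
\]
This gives $(1-\kappa)\lambda\leq0$. Since $\kappa<1$, it follows that $\lambda\leq0$. Nonnegativity of $e_k$ then gives $\lambda=0$, and hence $e_k\to0$.

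As an alternative I could avoid limsup entirely with an $\varepsilon$-argument. Fix $\varepsilon>0$ and choose $N$ such that $\delta_k\leq\varepsilon$ for all $k\geq N$. Unrolling the recursion from $N$ gives, for $k\geq N$,
\[
e_k\leq\kappa^{k-N}e_N+\frac{c\varepsilon}{1-\kappa}.
\]
Taking $k\to\infty$ yields $\limsup_k e_k\leq c\varepsilon/(1-\kappa)$, and since $\varepsilon$ is arbitrary the limit is zero.

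The only step that needs care is finiteness of $\lambda$ before it is cancelled in $\lambda\leq\kappa\lambda$. The bound in the first step settles this; the $\varepsilon$-version bypasses the issue altogether, which is why I would write that one in the final text.
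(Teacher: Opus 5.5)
Your proposal is correct, and your $\varepsilon$-version is essentially the paper's argument. The paper unrolls from $k=0$ to get $e_k\leq\kappa^k e_0+c\sum_{j<k}\kappa^{k-1-j}\delta_j$ and splits the sum into a finite prefix and a tail; restarting the unrolling at $N$ just absorbs that prefix into $e_N$. Your limsup variant is also valid, since you establish boundedness before cancelling $\lambda$ in $\lambda\leq\kappa\lambda$.
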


\begin{proof}
Iteration gives $e_k\leq\kappa^ke_0+c\sum_{j<k}\kappa^{k-1-j}\delta_j$. A finite-prefix/tail decomposition and the geometric-series bound give the result.
\end{proof}

\begin{remark}[Halpern regularization in the $B$-strong branch]
\label{rem:pure-halpern}
In an infinite-dimensional Hilbert space, strong convergence of the autonomous first row can be enforced by
\begin{equation}
\label{eq:pure-halpern-row}
y^{k+1}=\lambda_k u+(1-\lambda_k)J_{\alpha A}y^k,
\end{equation}
where $u\in\mathcal X$ is a fixed anchor and $(\lambda_k)_{k\ge0}\subset(0,1)$ satisfies $\lambda_k\to0$, $\sum_{k=0}^{\infty}\lambda_k=\infty$, and $\sum_{k=0}^{\infty}|\lambda_{k+1}-\lambda_k|<\infty$. Since $J_{\alpha A}$ is nonexpansive and $\operatorname{Fix}J_{\alpha A}=\zer A\ne\varnothing$, Halpern--Wittmann theory gives $y^k\to y^\star:=\proj_{\zer A}u$ strongly \cite{Halpern1967,Wittmann1992}. Hence $\bar y^k\to y^\star$, and the same tracking recursion yields $x^k\to B^{-1}y^\star$ strongly.
\end{remark}

\subsection{Example: nonlinear capacitor simulation}
\label{subsec:single-varicap-example}

This section gives a simple numerical demonstration of the pure composition algorithm with $\theta = -1$, simulating the periodic voltage response of the nonlinear varicap capacitor introduced in \Cref{subsec:varicap-model}.
For the numerical computation, we discretize the device model \eqref{eq:varicap-composition-model}--\eqref{eq:charge-mean-slice} using $N$ grid points with $h=T/N$. The periodic backward difference $D$ is one possible discretization of $d/dt$:
\begin{equation}
\label{eq:periodic-difference}
(Dq)_j:=\frac{q_j-q_{j-1}}{h},\qquad q_{-1}:=q_{N-1}.
\end{equation}
For every $z\in\R^N$,
\begin{equation}
\label{eq:periodic-difference-monotone}
\inner{z}{Dz}=\frac{1}{2h}\sum_{j=0}^{N-1}(z_j-z_{j-1})^2\geq0,\qquad D\one=0,
\end{equation}
so $D$ is monotone and its kernel is the constant mode, $D\mathbf{1}=0$. We use a zero-mean discrete current $i^\star\in\mathbb R^N$ and apply $B_\varepsilon$, defined in \eqref{eq:single-varicap-law}, componentwise. We define the discrete mean-charge slice as $\mathcal Q_{\rm ref}:=\{q\in\mathbb R^N:N^{-1}\mathbf{1}^{\mathsf T}q=\bar q_{\rm ref}\}$.

The dynamic operator $A(q)=Dq-i^\star$ is maximal monotone in finite dimensions, while the static operator $B_\varepsilon$ is maximal $\varepsilon$-strongly monotone. The second branch of \Cref{thm:pure-one-sided} therefore applies on the prescribed charge-mean slice. Because $D\mathbf{1}=0$, the current equation leaves the constant charge mode undetermined. We therefore prescribe the mean charge $\bar q_{\rm ref}$ and assume that the initial charge lies in $\mathcal Q_{\rm ref}$, equivalently $N^{-1}\mathbf{1}^{\mathsf T}q^0=\bar q_{\rm ref}$. In centered coordinates $q_b:=q-\bar q_{\rm ref}\mathbf{1}$, this is the zero-average condition $N^{-1}\mathbf{1}^{\mathsf T}q_b^0=0$.

For the original kernel $\theta=-1$, the iteration is
\begin{subequations}
\label{eq:single-varicap-iteration}
\begin{align}
\widetilde q^{k+1}
&=(I+\alpha D)^{-1}(q^k+\alpha i^\star),
\qquad
q^{k+1}=P_{\bar q_{\rm ref}}\widetilde q^{k+1},
\label{eq:single-varicap-q}\\
v^{k+1}
&=J_{\beta B_\varepsilon}(v^k+\beta q^k).
\label{eq:single-varicap-v}
\end{align}
\end{subequations}
Here $P_{\bar q_{\rm ref}}$ is the affine projector onto $\mathcal Q_{\rm ref}$. The shifted dynamic solve preserves the prescribed mean in exact arithmetic, while the projection fixes the gauge in finite precision.


\begin{figure}[t]
\centering
\begin{minipage}[c]{0.47\textwidth}
\centering
\includegraphics[width=\linewidth]{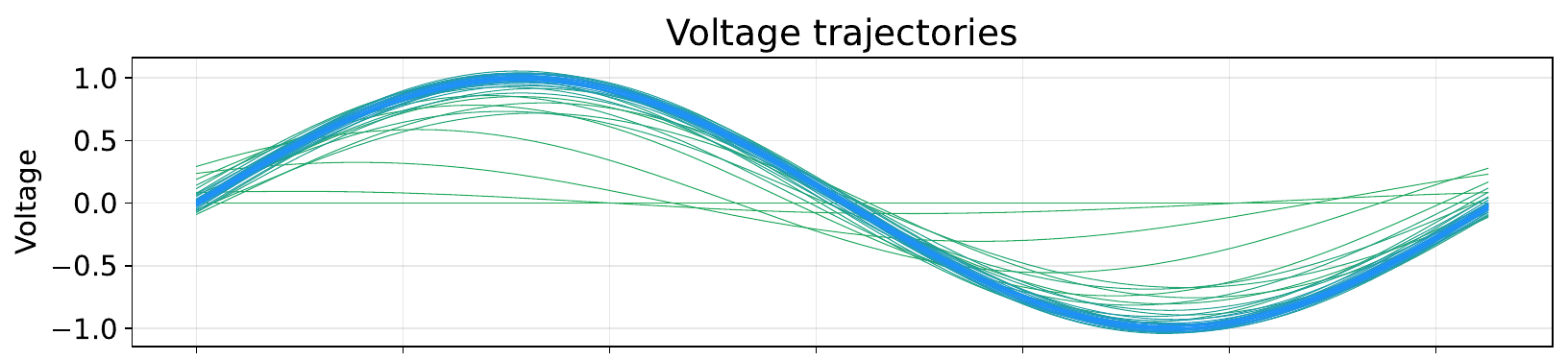}
\end{minipage}\hfill
\begin{minipage}[c]{0.47\textwidth}
\centering
\includegraphics[width=\linewidth]{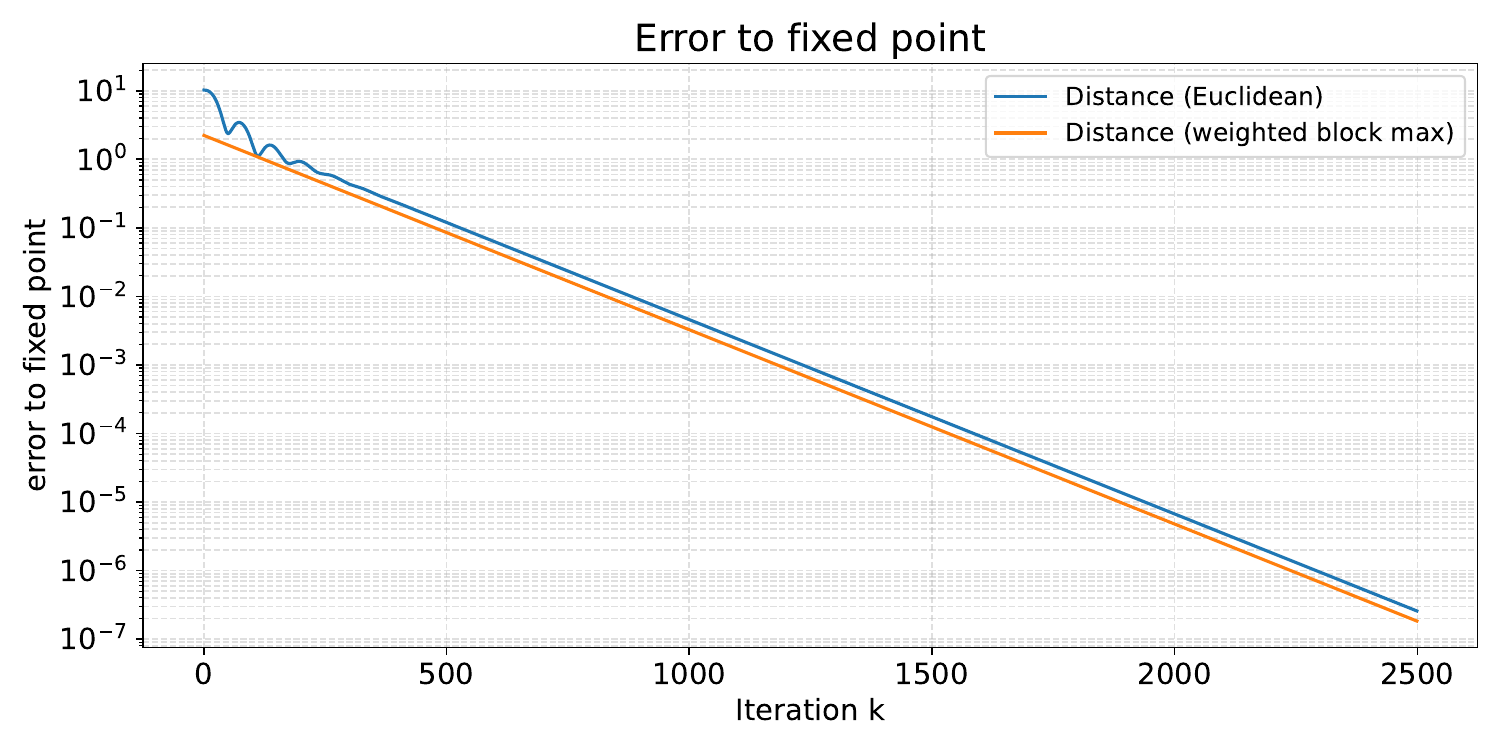}
\end{minipage}
\caption{Single-varicap voltage recovery: selected voltage trajectories and fixed-point errors.}
\label{fig:single-varicap}
\end{figure}


The experiment uses
\begin{equation}
\label{eq:single-varicap-parameters}
(N,T,K,V_0,\varepsilon,\alpha,\beta)
=(200,2\pi,1,5,10^{-6},0.1,0.1).
\end{equation}
We prescribe $v_j^\star=\sin(jT/N)$, set $q^\star=B_\varepsilon(v^\star)$, $i^\star=Dq^\star$, and $\bar q_{\rm ref}=N^{-1}\one\T q^\star$. The charge sequence converges on the affine slice. Since $J_{\beta B_\varepsilon}$ has contraction factor $(1+\beta\varepsilon)^{-1}$,
\begin{equation}
\label{eq:single-varicap-tracking}
\norm{v^{k+1}-v^\star}
\leq(1+\beta\varepsilon)^{-1}
\bigl(\norm{v^k-v^\star}+\beta\norm{q^k-q^\star}\bigr),
\end{equation}
and the voltage converges to the prescribed periodic trajectory. Appendix~\ref{app:varicap-protocol} records the Fourier solve, scalar device resolvent, initialization, and plotting protocol.

\section{Application to cascaded deep equilibrium models}
\label{sec:mondeq}

Implicit deep learning represents hidden features through fixed-point equations \cite{ElGhaouiEtAl2021}, and multiscale deep equilibrium models couple equilibrium representations across multiple feature resolutions \cite{BaiKoltunKolter2020}. Monotone operator equilibrium networks formulate implicit layers as strongly monotone inclusions and compute their equilibria using operator-splitting methods \cite{WinstonKolter2020}, and have equivalent nonlinear circuit representations \cite{Chaffey2025}. Feed-forward cascades connect the outputs of such equilibrium blocks to the inputs of subsequent blocks. The algorithms we present in this paper can be used to solve the equilibria of such cascades.

We apply the Douglas--Rachford specialization of \Cref{alg:triangular} to each block of a finite feed-forward cascade. For a fixed block input, the equilibrium inclusion can be expressed in the form $0\in Hx+A(Bx)-b$, and the associated triangular update becomes a Douglas--Rachford step under the shadow-state change of variables. We interleave these updates by advancing each block once per global cycle and immediately transmitting its primal resolvent readout to the next block. Thus the algorithm iterates the shadow states while coupling the blocks through their primal readouts.

The convergence analysis extends the lower-triangular comparison mechanism of \Cref{sec:pure-composition} to this finite block network. The resulting comparison matrix has the individual block contraction factors on its diagonal, while the feed-forward gains enter only its strict lower triangle. Under the block assumptions stated below, this structure yields global linear convergence for arbitrary fixed real interconnection gains, including cases where the assembled equilibrium inclusion is nonmonotone in the Euclidean inner product.

\subsection{Finite feed-forward cascade}
\label{subsec:mondeq-model}

Let $n\geq2$. For each block $i$, let $\mathcal Z_i$ and $\mathcal U_i$ be finite-dimensional real Hilbert spaces for the state $z_i\in\mathcal Z_i$ and input $u_i\in\mathcal U_i$, respectively. Let $\Psi_i\colon\mathcal Z_i\rightrightarrows\mathcal Z_i$ be maximal monotone, let $B_i\colon\mathcal U_i\to\mathcal Z_i$ be linear, and let the self-adjoint operator $H_i\colon\mathcal Z_i\to\mathcal Z_i$ satisfy
\begin{equation}
  0\in H_i z_i+\Psi_i(z_i)+B_i u_i,
  \qquad
  \mu_iI\preceq H_i\preceq L_iI,
  \qquad 0<\mu_i\leq L_i<\infty.
  \label{eq:mondeq-single-block}
\end{equation}
For $i=1,\ldots,n-1$, let $C_i\colon\mathcal Z_i\to\mathcal U_{i+1}$ and $D_i\colon\mathcal U_i\to\mathcal U_{i+1}$ be linear. For a prescribed external input $u_{\rm ext}$, the feed-forward interconnection is
\begin{equation}
u_1=u_{\mathrm{ext}},\qquad u_{i+1}=\sigma_i(C_i z_i+D_i u_i),\qquad i=1,\ldots,n-1,\qquad \sigma_i\in\mathbb{R}.
\label{eq:mondeq-cascade}
\end{equation}
The interconnection gains $\sigma_i$ are fixed real scalars. Since $H_i+\Psi_i$ is strongly monotone, every block has a unique equilibrium for fixed input; hence the cascaded primal equilibrium is uniquely determined by solving the blocks successively for $i=1,\ldots,n$.

\subsection{Interleaved Douglas--Rachford inference}
\label{subsec:mondeq-drs}
\label{subsec:mondeq-algorithm}
Fix an input $u\in U_i$ and a stepsize $\lambda_i>0$, and define $F_{i,u}(z):=H_i z+B_i u$. Apply the Douglas--Rachford specialization of \Cref{alg:triangular} with
\begin{equation}
\label{eq:mondeq-master-specialization}
\begin{gathered}
A=\Psi_i,\qquad B=F_{i,u}^{-1},\qquad H=I,\qquad b=0,\\
\alpha=\lambda_i,\qquad\beta=\lambda_i^{-1},\qquad\theta=1.
\end{gathered}
\end{equation}
Writing the master variables as $(y,x)=(a,d)$, the fixed-point relations become $0\in\Psi_i(a)+d$ and $d=F_{i,u}(a)$, which recover the block \eqref{eq:mondeq-single-block}. Under \eqref{eq:mondeq-master-specialization}, one triangular step reads
\begin{equation}
\label{eq:mondeq-triangular-step}
\begin{aligned}
a^+&=J_{\lambda_i\Psi_i}(a-\lambda_i d),\\
d^+&=J_{\lambda_i^{-1}F_{i,u}^{-1}}\bigl(d+\lambda_i^{-1}(2a^+-a)\bigr).
\end{aligned}
\end{equation}
Introduce the shadow state $s:=a-\lambda_i d$ and its update $s^+:=a^+-\lambda_i d^+$. The first row of \eqref{eq:mondeq-triangular-step} gives $a^+=J_{\lambda_i\Psi_i}s$, while the inverse-resolvent identity applied to the second row yields
\begin{equation}
\label{eq:mondeq-shadow-reduction}
\begin{aligned}
\lambda_i d^+&=2a^+-s-J_{\lambda_iF_{i,u}}(2a^+-s),\\
s^+&=s+J_{\lambda_iF_{i,u}}(2a^+-s)-a^+.
\end{aligned}
\end{equation}
Using $J_{\lambda_iF_{i,u}}v=J_{\lambda_iH_i}(v-\lambda_iB_i u)$ in \eqref{eq:mondeq-shadow-reduction} gives the following shadow-state formulation. For $s\in Z_i$, define
\begin{equation}
\begin{aligned}
  p_{i,u}(s)&=J_{\lambda_i\Psi_i}s,
  &q_{i,u}(s)&=J_{\lambda_iH_i}\bigl(2p_{i,u}(s)-s-\lambda_iB_i u\bigr),\\[-1mm]
  T_{i,u}(s)&=s+q_{i,u}(s)-p_{i,u}(s).
\end{aligned}
\label{eq:mondeq-drs-components}
\end{equation}
Equivalently, $T_{i,u}=\tfrac12(I+R_{\lambda_i(H_i+B_i u)}R_{\lambda_i\Psi_i})$, where $H_i+B_i u$ denotes the affine map $z\mapsto H_i z+B_i u$.  Thus $s$ is the Douglas--Rachford shadow state, while $p_{i,u}(s)$ and $q_{i,u}(s)$ are primal resolvent points \cite{BauschkeCombettes2017,EcksteinBertsekas1992}.

\begin{proposition}[Shadow fixed points and cascaded equilibria]
\label[proposition]{prop:mondeq-shadow-primal}
For fixed $u$, $s_i^\star=T_{i,u}(s_i^\star)$ if and only if
\begin{equation}
  p_{i,u}(s_i^\star)=q_{i,u}(s_i^\star)=z_i^\star,
  \qquad
  0\in H_i z_i^\star+\Psi_i(z_i^\star)+B_i u,
  \qquad
  s_i^\star=z_i^\star-\lambda_i(H_i z_i^\star+B_i u).
\label{eq:shadow-fixed-point}
\end{equation}
Consequently, the shadow reconstruction above establishes a one-to-one correspondence between solutions of \eqref{eq:mondeq-single-block}--\eqref{eq:mondeq-cascade} and fixed points of the interleaved cascade below.
\end{proposition}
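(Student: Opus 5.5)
The plan is to work directly with the shadow-state map $T_{i,u}$ in \eqref{eq:mondeq-drs-components} and use the defining inclusions of the two resolvents. Write $p=p_{i,u}(s)$ and $q=q_{i,u}(s)$. The resolvent identities give
\[
s-p\in\lambda_i\Psi_i(p),
\qquad
2p-s-\lambda_iB_iu-q=\lambda_iH_iq .
\]
The second identity is just the statement that $q=J_{\lambda_iH_i}(2p-s-\lambda_iB_iu)$ for the linear self-adjoint operator $H_i$.

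\emph{Forward direction.} Suppose $s=T_{i,u}(s)$. By the formula $T_{i,u}(s)=s+q-p$, this happens exactly when $q=p$. Set $z_i^\star:=p=q$. The second identity then becomes
\[
p-s-\lambda_iB_iu=\lambda_iH_ip,
\qquad\text{that is,}\qquad
s=z_i^\star-\lambda_i(H_iz_i^\star+B_iu),
\]
which is the stated shadow reconstruction. Substituting this expression into $s-p\in\lambda_i\Psi_i(p)$ gives
\[
-\lambda_i(H_iz_i^\star+B_iu)\in\lambda_i\Psi_i(z_i^\star).
\]
Dividing by $\lambda_i>0$ yields $0\in H_iz_i^\star+\Psi_i(z_i^\star)+B_iu$.

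\emph{Converse.} Let $z_i^\star$ satisfy the block inclusion and define $s_i^\star$ by the reconstruction formula. Then
\[
s_i^\star-z_i^\star=-\lambda_i(H_iz_i^\star+B_iu)\in\lambda_i\Psi_i(z_i^\star),
\]
so $p_{i,u}(s_i^\star)=z_i^\star$, since the resolvent is single valued. Next compute the argument of the second resolvent:
\[
2z_i^\star-s_i^\star-\lambda_iB_iu=z_i^\star+\lambda_iH_iz_i^\star .
\]
This is $(I+\lambda_iH_i)z_i^\star$, so $q_{i,u}(s_i^\star)=z_i^\star$. Hence $q=p$ and $T_{i,u}(s_i^\star)=s_i^\star$. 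This proves the equivalence for a fixed input $u$, and it also shows that the fixed point is unique. Uniqueness holds because $H_i+\Psi_i$ is strongly monotone, so $z_i^\star$ is unique, and $s_i^\star$ is then determined by $z_i^\star$ through the reconstruction formula.

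\emph{Cascade correspondence.} I would argue by induction along the feed-forward chain. At a fixed point of the interleaved cascade, each block input is $u_{i+1}=\sigma_i(C_ip_i+D_iu_i)$, computed from the primal readout $p_i$ of block $i$. The single-block equivalence then forces $p_i=z_i^\star$ successively for $i=1,\ldots,n$, beginning from $u_1=u_{\rm ext}$. So the readouts reproduce exactly the inputs in \eqref{eq:mondeq-cascade}. Conversely, the unique cascaded equilibrium, computed block by block, yields shadow states $s_i^\star$ through the reconstruction formula, and these form a fixed point. Injectivity in both directions follows from the uniqueness in each block.

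The main obstacle is that the interleaved cascade has not yet been formally defined at this point in the paper. The argument therefore has to assume that it transmits the primal readout $p_{i,u_i}(s_i)$, or equivalently $q_{i,u_i}(s_i)$, since the two coincide at a fixed point. If instead it transmits some other combination of the states, the induction step must check that this combination also equals $z_i^\star$ at a fixed point.
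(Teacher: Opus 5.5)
Your proposal is correct and follows the paper's proof. At a fixed point you set $p=q=:z$ and read off $s-z\in\lambda_i\Psi_i(z)$ and $s-z=-\lambda_i(H_iz+B_iu)$; you obtain the converse by reversing these identities, and the cascade statement by applying the single-block equivalence successively along \eqref{eq:mondeq-cascade}. Your concern about the transmitted signal is settled by \eqref{eq:mondeq-iteration}: the paper transmits $q_i^k$, which coincides with $p_i^k$ at a fixed point, exactly as you anticipated.
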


\begin{proof}
At a shadow fixed point, \eqref{eq:mondeq-drs-components} gives $p=q=:z$.  The two resolvent relations yield $s-z\in\lambda_i\Psi_i(z)$ and $s-z=-\lambda_i(H_i z+B_i u)$, which are equivalent to the displayed inclusion and shadow reconstruction.  Reversing these identities proves the converse, and applying the argument successively along \eqref{eq:mondeq-cascade} proves the global statement.
\end{proof}

One global inference cycle applies one Douglas--Rachford specialization of \Cref{alg:triangular} to each block in the order $i=1,\ldots,n$. Given $\boldsymbol s^k=(s_1^k,\ldots,s_n^k)$, set $u_1^{k+1}=u_{\rm ext}$ and, for $i=1,\ldots,n$, compute
\begin{equation}
\begin{aligned}
  p_i^k&=J_{\lambda_i\Psi_i}s_i^k,
  &q_i^k&=J_{\lambda_iH_i}\bigl(2p_i^k-s_i^k-\lambda_iB_i u_i^{k+1}\bigr),\\[-1mm]
  s_i^{k+1}&=s_i^k+q_i^k-p_i^k,
  &u_{i+1}^{k+1}&=\sigma_i(C_iq_i^k+D_i u_i^{k+1}),\quad i<n.
\end{aligned}
\label{eq:mondeq-iteration}
\end{equation}
The last input row is omitted for $i=n$.  The primal readout $q_i^k$, rather than the shadow state, is transmitted immediately to block $i+1$. Denote the resulting global map by
\begin{equation}
  \boldsymbol s^{k+1}=\mathcal T_n\boldsymbol s^k.
  \label{eq:mondeq-global-map}
\end{equation}
Sequential inference instead completes each block equilibrium before initiating the next solve; \eqref{eq:mondeq-iteration} advances all blocks once per global cycle.

\subsection{Lower-triangular contraction}
\label{subsec:mondeq-convergence}

Define
\begin{equation}
\begin{aligned}
  j_i&=(1+\lambda_i\mu_i)^{-1},
  &r_i&=\max_{t\in[\mu_i,L_i]}\left|\frac{1-\lambda_i t}{1+\lambda_i t}\right|,
  &\kappa_i&=\frac{1+r_i}{2}<1,\\[-1mm]
  \eta_i&=\lambda_i\|J_{\lambda_iH_i}B_i\|,
  &g_i&=|\sigma_i|\,\|C_i\|,
  &\tau_i&=|\sigma_i|\,\|D_i\|+g_i\eta_i,\\[-1mm]
  h_{ij}&=g_jj_j\prod_{\ell=j+1}^{i-1}\tau_\ell,
  &&1\leq j<i\leq n.
\end{aligned}
\label{eq:lower-triiangular-paras}
\end{equation}
where an empty product equals one.  Let $L_n\in\mathbb R_+^{n\times n}$ be the lower-triangular matrix
\begin{equation}
  (L_n)_{ij}=
  \begin{cases}
    0,&j>i,\\
    \kappa_i,&j=i,\\
    \eta_i h_{ij},&j<i.
  \end{cases}
  \label{eq:mondeq-comparison-matrix}
\end{equation}

\begin{theorem}[Global linear convergence of the interleaved cascade]
\label{thm:mondeq-convergence}
Under \eqref{eq:mondeq-single-block}, the global map satisfies
\begin{equation}
  e(\mathcal T_n\boldsymbol s-\mathcal T_n\widetilde{\boldsymbol s})
  \leq L_n e(\boldsymbol s-\widetilde{\boldsymbol s}),
  \qquad
  \rho(L_n)=\max_{1\leq i\leq n}\kappa_i<1.
  \label{eq:mondeq-comparison}
  \end{equation}
Hence $\mathcal T_n$ has a unique fixed point and \eqref{eq:mondeq-global-map} converges globally and linearly in a Perron-weighted block maximum norm.  The primal readouts and feed-forward inputs converge linearly to the unique cascaded equilibrium.
\end{theorem}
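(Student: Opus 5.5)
The plan is to prove the comparison inequality \eqref{eq:mondeq-comparison} block by block, in the order the blocks are swept, and then invoke the Perron-weighted argument of \Cref{thm:global}. The matrix $L_n$ is lower triangular, so its spectral radius is its largest diagonal entry, $\max_i\kappa_i<1$.

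Fix two global states $\boldsymbol s,\widetilde{\boldsymbol s}$ and write $\Delta$ for differences. The first step is a \emph{single-block perturbation estimate} for fixed input. For fixed $u$, $T_{i,u}=\tfrac12(I+R_{\lambda_i(H_i+B_iu)}R_{\lambda_i\Psi_i})$. Here $R_{\lambda_i\Psi_i}$ is nonexpansive. Since $H_i$ is self-adjoint with spectrum in $[\mu_i,L_i]$, the reflection $R_{\lambda_iH_i}=(I-\lambda_iH_i)(I+\lambda_iH_i)^{-1}$ has norm $r_i$. The affine shift by $B_iu$ does not change its Lipschitz constant. Hence $\Lip(T_{i,u})\le(1+r_i)/2=\kappa_i$.

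Next I bound how the input enters. The input appears only inside $q_{i,u}$, through $J_{\lambda_iH_i}(\cdot-\lambda_iB_iu)$. Because $J_{\lambda_iH_i}$ is linear, the dependence is additive, with $\|\partial q/\partial u\|=\lambda_i\|J_{\lambda_iH_i}B_i\|=\eta_i$. Therefore
\[
\|\Delta s_i^{+}\|\le\kappa_i\|\Delta s_i\|+\eta_i\|\Delta u_i\|,
\qquad
\|\Delta q_i\|\le\Lip(q_{i,u})\|\Delta s_i\|+\eta_i\|\Delta u_i\|.
\]
I expect the main obstacle to be the coefficient $\Lip(q_{i,u})$ in the second inequality. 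Here $2p-s=R_{\lambda_i\Psi_i}s$ is nonexpansive and $\|J_{\lambda_iH_i}\|=j_i$, so $\|\Delta q_i\|\le j_i\|\Delta s_i\|+\eta_i\|\Delta u_i\|$. This is exactly what makes $g_jj_j$ appear in $h_{ij}$.

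The second step propagates the input differences down the cascade. Since $u_1$ is fixed, $\Delta u_1=0$. The interconnection gives
\[
\|\Delta u_{i+1}\|\le g_i\|\Delta q_i\|+|\sigma_i|\|D_i\|\|\Delta u_i\|\le g_ij_i\|\Delta s_i\|+\tau_i\|\Delta u_i\|.
\]
Unrolling this recursion by induction on $i$ gives
\[
\|\Delta u_i\|\le\sum_{j<i}g_jj_j\prod_{\ell=j+1}^{i-1}\tau_\ell\,\|\Delta s_j\|=\sum_{j<i}h_{ij}\|\Delta s_j\|.
\]
Substituting this into the shadow estimate yields row $i$ of $L_n$. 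This proves the componentwise inequality.

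For the conclusions, the componentwise inequality with $\rho(L_n)<1$ gives, via \Cref{lem:z-m-matrix}(i) exactly as in part 1 of \Cref{thm:global}, a contraction in the Perron-weighted block maximum norm. Banach's theorem then gives a unique fixed point and linear convergence. By \Cref{prop:mondeq-shadow-primal}, this fixed point corresponds to the unique cascaded equilibrium. The readouts and inputs are Lipschitz functions of the shadow states through the bounds above: $\|q_i^k-z_i^\star\|\le j_i\|s_i^k-s_i^\star\|+\eta_i\|u_i^{k+1}-u_i^\star\|$, and the unrolled input bound. Since the shadow errors decay linearly, the readouts and inputs inherit the linear rate.
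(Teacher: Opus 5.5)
Your proposal is correct and follows the paper's proof essentially step for step: per-block estimates $\|\Delta q_i\|\le j_i\|\Delta s_i\|+\eta_i\|\Delta u_i\|$ and $\|\Delta s_i^{+}\|\le\kappa_i\|\Delta s_i\|+\eta_i\|\Delta u_i\|$ from nonexpansiveness of $R_{\lambda_i\Psi_i}$ and the spectral bounds on $H_i$, then the input recursion unrolled from $\Delta u_1=0$ into the coefficients $h_{ij}$, the diagonal spectral radius of the lower-triangular $L_n$, and the Perron-weighted Banach--Picard conclusion, with \Cref{prop:mondeq-shadow-primal} identifying the limit. One minor imprecision is that $\|R_{\lambda_iH_i}\|$ and $\|J_{\lambda_iH_i}\|$ are only bounded by $r_i$ and $j_i$, not equal to them, but the argument uses only these bounds; your explicit Lipschitz bound showing that the readouts and inputs inherit the linear rate is a useful detail the paper leaves implicit.
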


\begin{proof}
Nonexpansiveness of $R_{\lambda_i\Psi_i}$ and the spectral bounds on $H_i$ give, for arbitrary inputs $u,\widetilde u$,
\begin{equation}
\begin{aligned}
  \|q_{i,u}(s)-q_{i,\widetilde u}(\widetilde s)\|
  &\leq j_i\|s-\widetilde s\|+\eta_i\|u-\widetilde u\|,\\
  \|T_{i,u}(s)-T_{i,\widetilde u}(\widetilde s)\|
  &\leq \kappa_i\|s-\widetilde s\|+\eta_i\|u-\widetilde u\|.
\end{aligned}
\label{eq:mondeq-block-contraction}
\end{equation}
The interconnection therefore satisfies
\(
\|u_{i+1}-\widetilde u_{i+1}\|
\leq g_i j_i\|s_i-\widetilde s_i\|+\tau_i\|u_i-\widetilde u_i\|.
\)
Since $u_1=\tilde u_1$, induction on $i$ yields
\(
\|u_i-\widetilde u_i\|\leq\sum_{j<i}h_{ij}\|s_j-\widetilde s_j\|,
\)
which substituted into \eqref{eq:mondeq-block-contraction} gives the comparison matrix in \eqref{eq:mondeq-comparison}.  Its eigenvalues are the diagonal entries $\kappa_i$; \Cref{lem:perron-mblock} then gives the weighted contraction and Banach--Picard convergence. \Cref{prop:mondeq-shadow-primal} identifies the limiting primal equilibrium.
\end{proof}

The feed-forward interconnection places every inter-block comparison coefficient strictly below the diagonal of $L_n$. This extends the lower-triangular comparison structure of the pure-composition family in \Cref{sec:pure-composition} to finitely many blocks. The inter-block coefficients affect the contraction weights and the propagation of perturbations, while $\rho(L_n)=\max_i\kappa_i$ depends only on the individual Douglas--Rachford contraction factors. For $n=2$,
\begin{equation}
  L_2=\begin{pmatrix}
\kappa_1 & 0 \\
\lambda_2\|J_{\lambda_2 H_2}B_2\|\,|\sigma_1|\,\|C_1\|j_1 & \kappa_2
\end{pmatrix},\qquad \rho(L_2)=\max\{\kappa_1,\kappa_2\}<1.
  \label{eq:mondeq-cascade-gain}
\end{equation}

\begin{remark}[A nonmonotone ReLU cascade]
\label{rem:nonmonotone-relu-cascade}
Consider the cascade \eqref{eq:mondeq-single-block}--\eqref{eq:mondeq-cascade} with $n=2$ scalar blocks, state variables $z_i\in\mathcal Z_i=\mathbb{R}$, and inputs $u_i\in\mathcal U_i=\mathbb{R}$ for $i=1,2$. Choose $H_1=H_2=I$, $\Psi_1=\Psi_2=N_{[0,\infty)}$, $B_1=B_2=-I$, $C_1=I$, $D_1=0$, $u_{\mathrm{ext}}=a>0$, and $\sigma_1=\sigma\in\mathbb{R}$. Here $N_C$ denotes the normal cone to $C$, so that $J_{\Psi_i}=\operatorname{ReLU}$. The operators $\Psi_i$ are maximal monotone, and the spectral bounds in \eqref{eq:mondeq-single-block} hold with $\mu_i=L_i=1$. Equations  \eqref{eq:mondeq-single-block}--\eqref{eq:mondeq-cascade} reduce to
\[
0\in z_1-a+N_{[0,\infty)}(z_1),\qquad 0\in z_2-\sigma z_1+N_{[0,\infty)}(z_2),
\]
with unique equilibrium $z^\star=(a,\max\{\sigma a,0\})$. The assembled inclusion is $0\in\mathcal{G}_\sigma(z)$, where
\[
\mathcal{G}_\sigma(z)=M_\sigma z-\begin{pmatrix}a\\0\end{pmatrix}+N_{[0,\infty)^2}(z),\quad M_\sigma=\begin{pmatrix}1&0\\-\sigma&1\end{pmatrix},\quad \lambda_{\min}\!\left(\frac{M_\sigma+M_\sigma^{\mathsf T}}{2}\right)=1-\frac{|\sigma|}{2}.
\]
Since the normal cone vanishes on $(0,\infty)^2$, $\mathcal{G}_\sigma$ is not monotone in the Euclidean inner product when $|\sigma|>2$.

Choose $\lambda_1=\lambda_2=1$. The Douglas--Rachford specialization \eqref{eq:mondeq-master-specialization} of \Cref{alg:triangular}, interleaved according to \eqref{eq:mondeq-iteration}, gives the explicit shadow map
\[
T_2(s_1,s_2)=\left(\frac{s_1+a}{2},\frac{s_2}{2}+\frac{\sigma}{4}(|s_1|+a)\right).
\]
The constants in \eqref{eq:lower-triiangular-paras} satisfy $j_i=\eta_i=\kappa_i=1/2$, so the two-block comparison matrix \eqref{eq:mondeq-cascade-gain} becomes
\[
L_2=\begin{pmatrix}\tfrac12&0\\\tfrac{|\sigma|}{4}&\tfrac12\end{pmatrix},\qquad \rho(L_2)=\frac12<1.
\]
\Cref{thm:mondeq-convergence} therefore guarantees global linear convergence of the interleaved iteration \eqref{eq:mondeq-iteration} for every fixed real $\sigma$. \Cref{prop:mondeq-shadow-primal} and the reconstruction formula \eqref{eq:shadow-fixed-point} identify its shadow limit as $s^\star=(a,\sigma a)$ and its limiting primal readouts as $z^\star$. In particular, the convergence guarantee applies when $|\sigma|>2$, even though the assembled equilibrium inclusion is nonmonotone in the Euclidean inner product.
\end{remark}

\section*{Conclusions and future work}
\addcontentsline{toc}{section}{Conclusions}

A broad class of coupled convex optimisation equilibria, bilinear saddle and potential systems, and nonlinear electronic-device laws can be written as $0\in Hx+A(Bx)-b$. Retaining the intermediate graph variable separates the two monotone factors and exposes a directed lift. The proposed kernel family exploits this lifted representation to yield a lower-triangular warped-resolvent realization evaluated by the ordinary resolvents of $A$ and $B$.

The kernel parameter organizes both established and new algorithms without changing the lifted fixed-point set. Forward--backward, inertial, primal--dual, and Douglas--Rachford methods arise from exact specializations, while the pure composition admits original, sequential, and reflected factorwise iterations. This separation between problem data and kernel data makes the algorithmic role of each construction explicit.

The convergence analysis in \Cref{subsec:global-contraction} is governed by a nonnegative comparison matrix. Its Schur condition gives constructive weighted norms, global linear convergence, and residual error bounds. The phase analysis in \Cref{subsec:phase-attainability} shows sharpness of the convergence conditions for nonpositive $\theta$ and a strict gap for some operator classes when $\theta > 0$. In the pure-composition setting, the autonomous first row allows us to show convergence under strong monotonicity of a single operator only (\Cref{subsec:pure-one-sided}). This is illustrated numerically with periodic voltage recovery for a nonlinear varicap (\Cref{subsec:single-varicap-example}). The MonDEQ cascade extends the same mechanism to a finite lower-triangular network, where feed-forward couplings affect transient weights while the comparison-system spectral radius is determined by the diagonal block factors (\Cref{subsec:mondeq-convergence}).

A natural direction for further research is a systematic characterization of the Hilbert geometries and kernel geometries associated with coupled operator compositions. This includes block-coupled and variable metrics beyond $P_t=\diag(I,tI)$, sharp conditions under which a triangular warped resolvent is averaged through monotonicity of the lift or of an operator--kernel pair, and extensions to nonlinear kernels. A complementary systems-theoretic direction is to relate the bilinear potential specialization to reciprocity and storage-function structure in dissipative systems \cite{Willems1972PartII}, and to potential representations of interconnected dynamical systems.

\section*{Acknowledgements}

The authors are grateful for useful discussions with Pontus Giselsson, Sebastian Banert and Richard Pates. The authors thank Professor Li Qiu (The Chinese University of Hong Kong, Shenzhen) for insightful discussions on phase analysis and the choice of metrics, which helped shape the perspectives developed in this work.

During the preparation of this work, the authors used ChatGPT (OpenAI) to assist with language editing, manuscript organization, LaTeX typesetting, and the checking of selected mathematical arguments. The authors reviewed and verified all AI-assisted content retained in the manuscript. The authors assume responsibility for all content.

\appendix
\section{Reusable finite-block gain constructions}
\label[appendix]{app:gain}

The proof of \Cref{thm:global} contains the complete two-block argument. This appendix extends the Perron-weighted block-maximum and diagonal Hilbert-product constructions to arbitrary finite block gains and identifies the optimal coefficient obtainable by diagonal rescaling of the component norms.

\subsection{Perron scaling for a finite block gain}

Let $\cX_\Pi=\cX_1\times\cdots\times\cX_m$ and define
\[
e(z)=(\norm{z_1},\ldots,\norm{z_m})\T.
\]
For $w\in\Rpp^m$, set $\norm{z}_{w,\infty}=\max_i\norm{z_i}/w_i$.

\begin{lemma}[Perron-weighted block maximum estimate]
\label[lemma]{lem:perron-mblock}
Let $T$ be a block map on $\Omega\subset\cX_\Pi$, and suppose
\begin{equation}
\label{eq:mblock-gain}
e(Tz-T\widetilde z)\leq L e(z-\widetilde z),
\qquad z,\widetilde z\in\Omega,
\end{equation}
for a nonnegative matrix $L\in\Rplus^{m\times m}$. If $\rho(L)<\gamma$, then
\[
w_\gamma=(\gamma I-L)^{-1}\one>0
\]
and
\[
\norm{Tz-T\widetilde z}_{w_\gamma,\infty}
\leq
\gamma\norm{z-\widetilde z}_{w_\gamma,\infty}.
\]
When $\gamma<1$, $T$ is a contraction in an equivalent complete norm on every closed invariant subset of $\Omega$.
\end{lemma}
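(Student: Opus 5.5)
The plan is to mirror the two-block argument in the proof of \Cref{thm:global}, replacing the $2\times2$ facts by their $m$-block versions in \Cref{lem:z-m-matrix}(i). That lemma was stated for arbitrary $n\times n$ nonnegative matrices, so it applies verbatim. Given $L\geq0$ with $\rho(L)<\gamma$, it yields the Neumann series $(\gamma I-L)^{-1}=\sum_{k\ge0}\gamma^{-k-1}L^k\geq0$. Hence $w_\gamma=(\gamma I-L)^{-1}\one\geq\gamma^{-1}\one>0$, and $Lw_\gamma=\gamma w_\gamma-\one$. This settles the first claim.

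For the contraction estimate, fix $z,\widetilde z\in\Omega$ and set $\delta=\norm{z-\widetilde z}_{w_\gamma,\infty}$. By definition, $\norm{z_i-\widetilde z_i}\leq w_{\gamma,i}\delta$ for every $i$, that is, $e(z-\widetilde z)\leq\delta w_\gamma$ entrywise. Since $L$ is entrywise nonnegative, multiplication by $L$ preserves entrywise inequalities between nonnegative vectors. Combining this with \eqref{eq:mblock-gain} gives
\[
e(Tz-T\widetilde z)\leq L e(z-\widetilde z)\leq\delta Lw_\gamma=\delta(\gamma w_\gamma-\one)\leq\gamma\delta w_\gamma .
\]
Dividing the $i$th entry by $w_{\gamma,i}>0$ and taking the maximum over $i$ yields $\norm{Tz-T\widetilde z}_{w_\gamma,\infty}\leq\gamma\delta$, as claimed.

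For the last sentence, two points need checking. First, $\norm{\cdot}_{w_\gamma,\infty}$ is a norm on $\cX_\Pi$ equivalent to the product norm. Indeed, with $\underline w=\min_i w_{\gamma,i}$ and $\overline w=\max_i w_{\gamma,i}$, we have
\[
\overline w^{-1}\max_i\norm{z_i}\leq\norm{z}_{w_\gamma,\infty}\leq\underline w^{-1}\max_i\norm{z_i},
\]
and $\max_i\norm{z_i}$ is equivalent to the Hilbert product norm up to the factor $\sqrt m$. Completeness therefore transfers from $\cX_\Pi$. Second, a closed subset $F\subset\Omega$ with $T(F)\subset F$ is a complete metric space under this norm. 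On such an $F$, $T$ is a $\gamma$-contraction whenever $\gamma<1$, and Banach--Picard applies.

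There is no substantial obstacle here. The only point requiring care is using nonnegativity of $L$ to pass entrywise inequalities through the multiplication, together with strict positivity of $w_\gamma$ so that the weighted max-norm is well defined and equivalent.
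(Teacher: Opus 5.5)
Your proposal is correct and follows essentially the same route as the paper: the Neumann series gives $w_\gamma>0$ with $Lw_\gamma=\gamma w_\gamma-\one$, and pushing $e(z-\widetilde z)\leq\delta w_\gamma$ through the nonnegative gain matrix yields the weighted block-maximum contraction. Your explicit norm-equivalence and Banach--Picard check for the final sentence fills in a detail the paper leaves implicit.
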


\begin{proof}
The Neumann series gives $(\gamma I-L)^{-1}\geq0$. Thus $w_\gamma>0$ and $Lw_\gamma=\gamma w_\gamma-\one<\gamma w_\gamma$. If $\delta=\norm{z-\widetilde z}_{w_\gamma,\infty}$, then $e(z-\widetilde z)\leq w_\gamma\delta$, and \eqref{eq:mblock-gain} gives the result.
\end{proof}

\subsection{A constructive diagonal Hilbert product norm}

\begin{lemma}[Diagonal quadratic Lyapunov weight]
\label{lem:diagonal-hilbert}
Let $L\geq0$ and $\rho(L)<1$. Define
\begin{equation}
\label{eq:mblock-hilbert-weight}
r=(I-L)^{-1}\one,
\qquad
s=(I-L\T)^{-1}\one,
\qquad
P=\diag\paren{\frac{s_i}{r_i}}_{i=1}^m,
\end{equation}
and
\[
\eta_r=\max_i\frac{(Lr)_i}{r_i},
\qquad
\eta_s=\max_i\frac{(L\T s)_i}{s_i},
\qquad
q=\sqrt{\eta_r\eta_s}.
\]
Then $0\leq q<1$ and
\begin{equation}
\label{eq:quadratic-lyapunov}
L\T P L\preceq q^2P.
\end{equation}
Consequently, every block map satisfying \eqref{eq:mblock-gain} obeys
\[
\norm{Tz-T\widetilde z}_{P,2}
\leq
q\norm{z-\widetilde z}_{P,2},
\qquad
\norm{z}_{P,2}^2=\sum_i\frac{s_i}{r_i}\norm{z_i}^2.
\]
\end{lemma}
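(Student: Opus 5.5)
The plan is to reproduce, for general $m$, the weighted Cauchy--Schwarz argument already used for the two-block case in the proof of \Cref{thm:global}.

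\emph{Step 1: the weights and the bound $q<1$.} Since $\rho(L)<1$ and $\rho(L\T)=\rho(L)$, \Cref{lem:z-m-matrix}(i) with $\gamma=1$ gives $r,s>0$ together with
\[
Lr=r-\one, \qquad L\T s=s-\one .
\]
Hence $(Lr)_i/r_i=1-1/r_i\in[0,1)$ for every $i$. Taking the maximum over the finitely many indices gives $0\le\eta_r<1$, and likewise $0\le\eta_s<1$. Therefore $0\le q<1$, and $P$ is well defined and positive definite.

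\emph{Step 2: the Lyapunov inequality on the nonnegative orthant.} Fix $d\in\Rplus^m$. Splitting $L_{ij}d_j=\sqrt{L_{ij}r_j}\cdot\sqrt{L_{ij}/r_j}\,d_j$ and applying Cauchy--Schwarz gives
\[
(Ld)_i^2\le (Lr)_i\sum_j L_{ij}\frac{d_j^2}{r_j}\le \eta_r r_i\sum_j L_{ij}\frac{d_j^2}{r_j}.
\]
Multiplying by $s_i/r_i$ and summing over $i$, we exchange the order of summation. This produces $(L\T s)_j\le\eta_s s_j$, so
\[
(Ld)\T P(Ld)\le \eta_r\eta_s\sum_j \frac{s_j}{r_j}d_j^2=q^2 d\T Pd .
\]

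\emph{Step 3: extending to all of $\R^m$.} This is the one genuinely new point compared with the two-block proof, since that proof used only $d\ge0$. For arbitrary $d\in\R^m$, nonnegativity of $L$ gives $|Ld|\le L|d|$ entrywise. Because $P$ is diagonal with positive entries,
\[
(Ld)\T P(Ld)=\sum_i p_i (Ld)_i^2\le \sum_i p_i (L|d|)_i^2\le q^2|d|\T P|d| = q^2 d\T Pd .
\]
This is exactly the matrix inequality \eqref{eq:quadratic-lyapunov}.

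\emph{Step 4: the norm contraction.} Take $d=e(z-\widetilde z)\ge0$. The hypothesis \eqref{eq:mblock-gain} gives $0\le e(Tz-T\widetilde z)\le Ld$ entrywise. Since the entries of $P$ are positive and squaring is monotone on $\Rplus$,
\[
\norm{Tz-T\widetilde z}_{P,2}^2\le (Ld)\T P(Ld)\le q^2\norm{z-\widetilde z}_{P,2}^2 .
\]
Taking square roots gives the stated contraction.

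The main obstacle is confined to Step 3, namely obtaining the matrix inequality from the orthant estimate, which the entrywise bound $|Ld|\le L|d|$ settles. The remaining steps are bookkeeping identical to the two-block argument.
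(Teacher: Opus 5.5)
Your proof is correct and follows the paper's argument step for step. Both derive $r,s>0$ and $\eta_r,\eta_s<1$ from $Lr=r-\one$ and $L\T s=s-\one$, apply the weighted Cauchy--Schwarz estimate on the nonnegative orthant, extend it to arbitrary $d$ via $|Ld|\le L|d|$, and finish by setting $d=e(z-\widetilde z)$. Your Step~4 is slightly more explicit than the paper about why the entrywise bound $e(Tz-T\widetilde z)\le Ld$ carries over to the diagonal weighted quadratic form.
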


\begin{proof}
Schur stability gives $r,s>0$, while $Lr=r-\one<r$ and $L\T s=s-\one<s$ imply $\eta_r,\eta_s<1$. For $d\in\Rplus^m$, weighted Cauchy--Schwarz gives
\[
(Ld)_i^2\leq(Lr)_i\sum_jL_{ij}\frac{d_j^2}{r_j}.
\]
Multiplying by $s_i/r_i$ and summing yields
\[
(Ld)\T P(Ld)
\leq
\eta_r\sum_j\frac{(L\T s)_j}{r_j}d_j^2
\leq
\eta_r\eta_s d\T P d.
\]
For arbitrary $d\in\R^m$, $|Ld|\leq L|d|$ and the preceding estimate applied to $|d|$ give $(Ld)\T P(Ld)\leq\eta_r\eta_s d\T P d=q^2d\T P d$, hence $L\T PL\preceq q^2P$. Setting $d=e(z-\widetilde z)$ and using \eqref{eq:mblock-gain} gives the stated block-map estimate.
\end{proof}

\subsection{Best diagonal block scaling}

For $w\in\Rpp^m$, define
\begin{equation}
\label{eq:cw}
c(w):=\max_i\frac{(Lw)_i}{w_i}.
\end{equation}
This is the contraction coefficient certified by $L$ in the weighted block maximum norm.

\begin{proposition}[Best diagonal block scaling]
\label{prop:best-diagonal-scaling}
For every nonnegative matrix $L$,
\begin{equation}
\label{eq:collatz-wielandt}
\inf_{w>0}c(w)=\rho(L).
\end{equation}
If $L$ is irreducible, the infimum is attained by a positive Perron vector.
\end{proposition}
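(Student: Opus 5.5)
This is the Collatz--Wielandt formula, and I would prove it as two inequalities.

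\emph{Lower bound.} Fix $w>0$ and set $D_w=\diag(w_1,\ldots,w_m)$. Then $c(w)=\norm{D_w^{-1}LD_w}_\infty$, which is the maximal row sum of the nonnegative matrix $D_w^{-1}LD_w$. Since spectral radius is invariant under similarity and bounded by any induced norm,
\[
\rho(L)=\rho(D_w^{-1}LD_w)\leq c(w).
\]
This is the estimate already used in the proof of \Cref{lem:z-m-matrix}(i). Taking the infimum over $w>0$ gives $\inf_{w>0}c(w)\geq\rho(L)$.

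\emph{Upper bound.} Fix $\gamma>\rho(L)$ and take $w_\gamma=(\gamma I-L)^{-1}\one$. By \Cref{lem:z-m-matrix}(i), or the Neumann series argument in \Cref{lem:perron-mblock}, $w_\gamma>0$ and $Lw_\gamma=\gamma w_\gamma-\one<\gamma w_\gamma$. Hence $c(w_\gamma)<\gamma$. Letting $\gamma\downarrow\rho(L)$ gives $\inf_{w>0}c(w)\leq\rho(L)$. No irreducibility is needed here, because this construction works for every nonnegative $L$, including the case $\rho(L)=0$.

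\emph{Attainment.} If $L$ is irreducible, Perron--Frobenius \cite[Chapter~2]{BermanPlemmons1994} provides a strictly positive vector $p$ with $Lp=\rho(L)p$. Then $(Lp)_i/p_i=\rho(L)$ for every $i$, so $c(p)=\rho(L)$ and the infimum is attained.

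The only point that needs care is ensuring that the weights are strictly positive in the general, possibly reducible, case. There a Perron vector may have zero entries and cannot be used as a weight. The $\gamma$-perturbation through $(\gamma I-L)^{-1}\one$ handles this, and since it is already supplied by \Cref{lem:z-m-matrix}, I expect no real obstacle.
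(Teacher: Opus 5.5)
Your proof is correct, but it is organized differently from the paper's. The paper invokes the Collatz--Wielandt characterization for irreducible $L$. It handles the reducible case by perturbing the matrix: it applies the irreducible result to the strictly positive matrix $L+\varepsilon\one\one\T$ and lets $\varepsilon\downarrow0$, which implicitly relies on continuity of the spectral radius in the entries.

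You instead prove both inequalities directly for every nonnegative $L$. The lower bound $\rho(L)\le c(w)$ comes from similarity with $D_w$ and the induced $\infty$-norm, the estimate already used in Lemma~\ref{lem:z-m-matrix}(i). The upper bound comes from perturbing the level $\gamma$ rather than the matrix, via the weight $w_\gamma=(\gamma I-L)^{-1}\one$. You need Perron--Frobenius only for attainment.

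Your route is self-contained within the paper's own lemmas and avoids any eigenvalue-continuity argument. It also produces explicit positive weights certifying every $\gamma>\rho(L)$, which is exactly the point the paper makes in the sentence following the proposition. The paper's route is shorter because it cites the classical theorem, and its near-minimizers are the Perron vectors of $L+\varepsilon\one\one\T$. The price is that it leaves the limiting argument implicit, in particular the lower bound $c(w)\ge\rho(L)$ for reducible $L$, which your similarity estimate settles directly.
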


\begin{proof}
Equation \eqref{eq:collatz-wielandt} is the Collatz--Wielandt characterization of the Perron root. For reducible $L$, apply the irreducible result to $L+\varepsilon\one\one\T$ and let $\varepsilon\downarrow0$.
\end{proof}

Thus $\rho(L)$ is the smallest coefficient available from diagonal rescaling of the component norms. The explicit weight $(\gamma I-L)^{-1}\one$ realizes every coefficient $\gamma>\rho(L)$ without requiring irreducibility.

\section{Reproducibility details for the numerical illustrations}
\label{app:numerics}

This appendix records the protocols for the phase-resolved and nonsmooth calibrations in \Cref{sec:certificate-calibration}, together with the implementation details of the pure-composition illustration in \Cref{sec:pure-composition}. The corresponding generated figures are supplied with the source package.

\subsection{Section 4 calibration protocols}
\label{app:optimization-protocols}

\paragraph{Phase-resolved sharpness}
For the family in \eqref{eq:phase-rotation-family}, the full real iteration matrix is a direct sum of identical $4\times4$ blocks. The scan uses $361$ equally spaced phases on $[0,\pi]$ for $\theta\in\{-1,0,1/2,1\}$. Direct linear recursions at $\varphi/\pi\in\{0,1/4,1/2,3/4,1\}$ are initialized from $(1,-0.7,0.4,1.2)$ in one rotation block and serve as an independent check of the modal spectral calculation.

\paragraph{Nonsmooth certificate calibration}
In \Cref{subsec:certificate-calibration}, each of $\mu_A$ and $\mu_B$ takes $29$ logarithmically spaced values on $[0.15,4]$. For each parameter pair and each coupling in \eqref{eq:certificate-couplings}, eight fixed initializations are used. The coordinates of $x^\star$ and $y^\star$ have independent Rademacher signs and magnitudes sampled uniformly from $[0.5,1]$, and the initial state is $y^0=y^\star+2\xi_y$ and $x^0=x^\star+2\xi_x$, where $\xi_y$ and $\xi_x$ are independent standard Gaussian vectors. The centers in \eqref{eq:certificate-exact-centers} are formed separately for every parameter pair, so $(y^\star,x^\star)$ satisfies the lifted inclusion exactly.

The relative pair error is
\[
E_k
:=
\frac{\sqrt{\norm{y^k-y^\star}^2+\norm{x^k-x^\star}^2}}
{\max\{1,\sqrt{\norm{y^\star}^2+\norm{x^\star}^2}\}}.
\]
A run is classified as convergent when $E_k\leq10^{-8}$, numerically divergent if a nonfinite value occurs or if $E_k>10^6\max\{1,E_0\}$, and unresolved if neither event occurs within $5000$ iterations. At every certified grid point, the diagonal matrix $P$ and factor $q_P$ are those supplied by \Cref{thm:global}. The observed factor $q_{\rm obs,P}$ is obtained by a least-squares fit to the semilogarithmic tail of $\norm{(y^k-y^\star,x^k-x^\star)}_{P,2}$ after excluding the initial transient and the roundoff plateau.

\subsection{Single-varicap implementation}
\label{app:varicap-protocol}

The periodic shifted solve $(I+\alpha D)^{-1}$ is evaluated in Fourier coordinates, and the prescribed charge mean is restored after every shifted solve. The scalar resolvent of $B_\varepsilon$ is evaluated componentwise by solving $u+\beta B_\varepsilon(u)=r$ on the branch $u<V_0$, together with the endpoint extension in \eqref{eq:single-varicap-law}. The voltage is initialized at zero, the initial charge is projected onto $\mathcal Q_{\rm ref}$, and the run uses $2500$ iterations. Voltage snapshots are recorded at $k=1,10,20,\ldots,400$. The two curves in the right panel of \Cref{fig:single-varicap} are errors to the terminal fixed point measured in the Euclidean product norm and in a fixed weighted block-maximum diagnostic.

\FloatBarrier
\bibliographystyle{siamplain}
\bibliography{references}

\end{document}